\theoremstyle{plain}
\newcommand{\diver}{\operatorname{div}}
\newcommand{\diag}{\operatorname{diag}}
\newcommand{\Diag}{\operatorname{Diag}}
\newcommand{\biDiag}{\operatorname{\mathbf{Diag}}}
\renewcommand{\Re}{\operatorname{Re}}
\renewcommand{\Im}{\operatorname{Im}}
\newcommand{\Cov}{\operatorname{Cov}}
\newcommand{\bCov}{\operatorname{\mathbf{Cov}}}
\newcommand{\Corr}{\operatorname{Corr}}
\newcommand{\Id}{\operatorname{Id}}
\newcommand{\HS}[2]{\operatorname{HS}\left(#1,#2\right)}
\newcommand{\HSs}[1]{\operatorname{HS}\left(#1\right)}
\newcommand{\meas}{\Gamma}
\newcommand{\Bfrak}{\mathfrak{B}}
\newcommand{\R}{\mathbb{R}}
\newcommand{\C}{\mathbb{C}}
\newcommand{\Rd}{\mathbb{R}^d}
\newcommand{\E}[1]{\mathbb{E} \left[ #1 \right]}
\newtheorem{theorem}{Theorem}
\newtheorem{proposition}[theorem]{Proposition}
\newtheorem{lemma}[theorem]{Lemma}
\newtheorem{assumption}{Assumption}
\begin{document}

\title[Quantitative passive imaging by iterative holography]{Quantitative passive imaging by iterative holography: The example of helioseismic holography}

\author{Björn~Müller$^1$, Thorsten~Hohage$^{1, 2}$, Damien~Fournier$^1$, Laurent~Gizon$^{1, 3}$}

\address{$^1$ Max-Planck-Institute for Solar System Research, Göttingen, Germany}
\address{$^2$ Institute for Numerical and Applied Mathematics, University of Göttingen, Germany}
\address{$^3$ Institute for Astrophysics, University of Göttingen, Germany}
\ead{muellerb@mps.mpg.de}
\vspace{10pt}
\begin{indented}
\item[]September 2023
\end{indented}

\begin{abstract}
In passive imaging, one attempts to reconstruct some coefficients in a wave equation from correlations of observed randomly excited solutions to this wave equation. Many methods proposed for this class of inverse problem so far are only qualitative, e.g., trying to identify the support of a perturbation. Major challenges are the increase in dimensionality when computing correlations from primary data in a preprocessing step, and often very poor pointwise signal-to-noise ratios. 
In this paper, we propose an approach that addresses both of these challenges: It works only on the primary data while implicitly using the full information contained in the correlation data, and it provides quantitative estimates and convergence by iteration. 

Our work is motivated by helioseismic holography, a well-established imaging method to map heterogenities and flows in the solar interior. 
We show that the back-propagation used in classical helioseismic holography can be interpreted as the adjoint of the Fr\'echet derivative of the operator which maps the properties of the solar interior to the correlation data on the solar surface. 
The theoretical and numerical framework 
for passive imaging problems developed in this paper 
extends helioseismic holography to nonlinear problems 
and allows for quantitative reconstructions. 
We present a proof of concept in uniform media.
   
\end{abstract}

%
\vspace{2pc}
\noindent{\it Keywords}: helioseismology, correlation data, passive imaging, big data

\section{Introduction}
In this paper, we consider passive imaging problems 
described by a linear time-harmonic wave equation 
\[
L[q]\psi =s
\]
with a random source $s$ and some unknown coefficient 
$q$, which is the quantity of interest. We assume 
that $\E{s}=0$ such that $\E{\psi}=0$ by linearity 
of $L[q]$. Solutions $\psi$ to this wave equation 
are observed on part of the boundary $\Gamma=\partial \Omega$ of 
a domain $\Omega$ for many independent realizations 
of $s$. Thus we can approximately compute the 
cross-covariance
   \begin{eqnarray}
       \label{eq: correlation}
       C(\bi{x}_1, \bi{x}_2)=\E{\psi(\bi{x}_1) \overline{\psi(\bi{x}_2)}}, \qquad \bi{x}_1, \bi{x}_2\in\Gamma.
       \qquad 
   \end{eqnarray}
Our aim is to determine the 
unknown parameter $q$ given noisy observations of 
$C$ or the corresponding integral operator 
$(\mathcal{C}f)(\bi{x}_1):=\int_{\Gamma}
C(\bi{x}_1, \bi{x}_2)f(\bi{x}_2)\,\mathrm{d}\bi{x}_2$. 
If $\Tr_{\Gamma} $ is the trace operator onto $\Gamma$, 
then straightforward calculations show that the forward 
operator mapping $q$ to $\mathcal{C}=\bCov[\Tr_{\Gamma}  \psi]$ 
is given by
\[
\mathcal{C}[q] 
= \bCov[\Tr_{\Gamma} L[q]^{-1}s]
= \Tr_{\Gamma}  L[q]^{-1} \bCov[s] (L[q]^{-1})^{*}\Tr_{\Gamma}^*\,.
\]
(Recall that the covariance operator $\bCov[v]\in L(\mathbb{X})$ of 
a random variable $v$ with values in a Hilbert-space $\mathbb{X}$ 
is defined implicitely by 
$\Cov(\langle v,\psi\rangle_{\mathbb{X}}, \langle v,\varphi\rangle_{\mathbb{X}})
=\langle \bCov[v]\psi,\varphi\rangle_{\mathbb{X}}$ for all 
$\varphi,\psi\in\mathbb{X}$.) 
An early and influential reference on passive imaging is the work of Duvall et al.\ \cite{Duvall1993} on time-distance helioseismology. 
Later, passive imaging has also been used in many other fields such as seismology (\cite{TLHP:10}), ocean acoustics (\cite{Burov2008}), and ultrasonics (\cite{WL:01}).
We refer to the monograph \cite{Garnier2014} by Garnier \& Papanicolaou for many further references. Concerning the uniqueness of passive imaging problems, we refer to 
\cite{HLOS:18}
for results in the time domain and to 
\cite{Agaltsov2018,Agaltsov_2020,devaney:79,Hohage2020} 
for results in the frequency domain. For the unique recovery of the source and the potential from passive far-field data, we refer to \cite{Li2021}.

   Local helioseismology analyzes acoustic oscillations at the solar surface in order to reconstruct physical quantities (subsurface flows, sound speed, density) in the solar interior (e.g. \cite{Gizon2005} and references therein). Since solar oscillations are excited by near-surface turbulent convection, it is reasonable to assume random, non-deterministic noise terms. In this paper, we will describe sound propagation in the solar interior by a scalar time-harmonic wave equation and study the passive imaging problem of parameter reconstruction from correlation measurements.
   
   Very large data sets of high-resolution solar Doppler images have been recorded from the ground and from space over the last 25 years. This leads to a five-dimensional ($2^2$ spatial dimensions and $1$ temporal dimension) cross-correlation data set on the solar surface, which cannot be stored and analyzed all at once. In traditional approaches, like time-distance helioseismology, the cross-correlations are reduced to a smaller number of observable quantities, such as travel times (\cite{Duvall1993}) or cross-correlation amplitudes (e.g. \cite{Liang2013}, \cite{Nagashima2017}, \cite{Pourabdian2018}). Since the reduction to these quantities leads to a loss in information, we are interested in using the whole cross-correlation data throughout the inversion procedure, stepping forward to full waveform inversions.

   Helioseismic holography, a technique within the field of local helioseismology, has proven to be a powerful tool for studying various aspects of the Sun's interior. It operates by propagating the solar wavefield backward from the surface to specific target locations within the Sun (\cite{LB97}). A notable success of helioseismic holography is the detection of active regions on the Sun's far side (e.g. \cite{LB2000b}, \cite{Liewer2014}, \cite{Yang2023}). Furthermore, helioseismic holography is used in many other applications, e.g. to study the subsurface structure of sunspots (\cite{Braun2008}, \cite{Birch2009}, \cite{Lindsey2010}), wave absorption in magnetic regions (\cite{Cally2000}, \cite{Schunker2007}, \cite{Schunker2008}), and seismic emission from solar granules \cite{Lindsey2013}. The main idea of helioseismic holography is the back-propagation (``egression'') of the wavefield at the solar surface (\cite{LB2000}). Improvements have been proposed in the choice of backward propagators (e.g. using Porter-Bojarski holograms (\cite{Porter1982}, \cite{Gizon2018}). Helioseismic holography has a strong connection to conventional beam forming, where imaging functionals similar to the holographic back-propagation occur (e.g. \cite{Garnier2016}). In contrast to these approaches, we will achieve improvements by iterations.
   
   In the present paper, we connect holographic imaging methods to iterative regularization methods. This way, holography can be extended to a full converging regularization method. This approach was successfully applied to inverse source problems in aeroacoustics (\cite{Hohage2020}) and is extended in this work to parameter identification problems. 

   The organization of the paper is as follows. In Section~\ref{sec: model problem} we introduce a generic model for the forward problem. 
   In Section \ref{sec: diagonals} we establish foundations of our 
   functional analytic setting by establishing sufficient conditions 
   under which the diagonal of an integral operator is well defined, 
   using Schatten class properties of embedding operators. 
   With this we compute the Fr\'echet derivative of the forward 
   operator and its adjoint in section \ref{sec: frechet derivative}. 
   Next, we discuss the algorithm of iterative holography in Section~\ref{sec: Algorithm}. Based on the analysis of 
   Sections~\ref{sec: model problem}--\ref{sec: Uniform Green} we then introduce forward operators in local helioseismology, their derivatives and adjoints in Section \ref{sec: Helmholtz}. Then we discuss 
   iterative helioseismic holography as an extension of 
   conventional helioseismic holography in Section \ref{sec: applications holography}, and demonstrate its performance in numerical examples 
   with simulated data in Section~\ref{sec: Inversions} before we 
   end the paper with conclusions in Section \ref{sec: conclusions}.
   Some technical issues are discussed in three short appendices.

\section{A model problem}
\label{sec: model problem}
We first present the main ideas of this paper for a generic scalar 
time-harmonic wave equation. Let $\Omega_0\subset \Omega$ be a smooth, bounded domain in $\Rd$ and let $\meas \subset \overline{\Omega}\setminus \Omega_0$ 
the hypersurface on which measurements are performed. $\meas$ may be part 
of the boundary $\partial\Omega$ or it may be contained in the interior of
$\Omega$. Moreover, consider the parameters 
\begin{eqnarray*}
    v \in L^{\infty} (\Omega, \C), \; 
    \bi{A} \in W^{\infty}(\diver, \Omega). 
\end{eqnarray*}
Here $W^{\infty}(\diver, \Omega):=\{\bi{A}\in L^{\infty}(\Omega, \R^d):\diver \bi{A}\in L^{\infty}(\Omega)\}$ with norm
$\|\bi{A}\|_{W^{\infty}(\diver,\Omega)}:
= \|\bi{A}\|_{L^\infty}+\|\diver\bi{A}\|_{L^\infty}$. 

Assume that the excitation of wavefields $\psi$ in $\Rd$ 
by random sources $s$, which are supported in $\Omega_0$, is described by the model 
\begin{subequations}\label{eqs:forward}
\begin{align}
&(-\Delta-2i\bi{A} \cdot \nabla+v -k^2) \psi=s,&&\mbox{in }\Omega\\
\label{eq: boundary condition}
&\frac{\partial\psi}{\partial\bi{n}} = B\operatorname{Tr}_{\partial\Omega} \psi&&\mbox{on }\partial\Omega
\end{align}
\end{subequations}
for the outward pointing normal vector $\bi{n}$ on $\partial\Omega$
and some operator $B\in L\left(H^{1/2}(\partial\Omega)\to H^{-1/2}(\partial\Omega)\right)$. (Here and in the following 
$L\left(\mathbb{X},\mathbb{Y}\right)$ denotes the space of bounded linear operators 
between Banach spaces $\mathbb{X}$ and $\mathbb{Y}$.) 
Typically, $B$ is some transparent boundary condition, e.g.\ 
$B\psi =ik\psi$ for $\partial\Omega=S^{d-1}$. We may also choose $B\psi:= \operatorname{DtN}\psi$ with an exterior Dirichlet-to-Neumann 
map for the Helmholtz equation with the Sommerfeld radiation condition. 
In this case, equation \eqref{eqs:forward} is equivalent to a problem posed 
on $\Rd$ with the Sommerfeld radiation condition. 

\begin{assumption}\label{ass:well_posed}
Suppose that for some $B_0\in L\left(H^{1/2}(\partial\Omega),H^{-1/2}(\partial\Omega)\right)$, $k\in\C$ and some set $\Bfrak_k\subset 
L^{\infty} (\Omega, \C) \times W^{\infty}(\diver, \Omega)$ 
of admissible parameters $v,\bi{A}$ the following holds true:
\begin{subequations}\label{eqs:well_posed}
\begin{align}
\label{eq:assPositivity}
&\diver \bi{A}-\Im k^2 +\Im v \leq 0&&\text{in }\Omega\\
\label{eq:assnoflux}
&\bi A\cdot \bi{n}=0&&\text{on }\partial\Omega\\
\label{eq:assImB}
&\Im \int_{\partial\Omega} (B\zeta)\, \overline{\zeta}\,\mathrm{d}s >0 
&&\mbox{for all }\zeta\in H^{1/2}(\partial\Omega), \zeta\neq 0\\
\label{eq:assB0}
&\Re\int_{\partial\Omega} (B_0\zeta)\overline{\zeta}\,\mathrm{d}s \leq 0 
&&\mbox{for all }\zeta\in H^{1/2}(\partial\Omega)\\
\label{eq:assBcompact}
&B-B_0: H^{1/2}(\partial\Omega)\to H^{-1/2}(\partial\Omega)
&&\text{is compact.}
\end{align}
\end{subequations}
\end{assumption}

The conditions~\eqref{eq:assImB}--\eqref{eq:assBcompact} are obviously 
satisfied for $B\zeta:=ik\zeta$, and they also hold true if $B$ is the 
exterior Dirichlet-to-Neumann map on a sphere or a circle (see \cite{Colton2013,Ihlenburg1998}). 
Throughout this paper we denote by $H^s_0(\Omega)$ the closure of the space of distributions on $\Omega$ in $H^s(\R^d)$. For a Lipschitz domain, we have the duality $H^{s}(\Omega)^*=H^{-s}_0(\Omega)$ (\cite[Thm 3.30]{McLean2000}).

\begin{proposition}\label{prop:wellposed}
Under Assumption~\ref{ass:well_posed} 
the problem~\eqref{eqs:forward} is well posed in the 
sense that for all $s\in H^{-1}_0(\Omega)$ there exists a 
unique $\psi\in H^1(\Omega)$ satisfying \eqref{eqs:forward} in the 
weak sense, 
and $\psi$ depends continuously on $s$ with respect to these norms. 
\end{proposition}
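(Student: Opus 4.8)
The standard route is a Fredholm-alternative argument built on a Gårding inequality, so I will work with the sesquilinear form associated to the weak formulation. First I would integrate \eqref{eqs:forward} against a test function $\varphi\in H^1(\Omega)$, use the boundary condition \eqref{eq: boundary condition} to replace the normal derivative term by the boundary pairing $\langle B\,\mathrm{Tr}_{\partial\Omega}\psi,\mathrm{Tr}_{\partial\Omega}\varphi\rangle$, and thereby obtain a bounded sesquilinear form $a(\psi,\varphi)=\int_\Omega\bigl(\nabla\psi\cdot\overline{\nabla\varphi}-2i(\bi{A}\cdot\nabla\psi)\overline{\varphi}+(v-k^2)\psi\overline{\varphi}\bigr)\,\mathrm{d}\bi{x}-\langle B\,\mathrm{Tr}\psi,\mathrm{Tr}\varphi\rangle$ on $H^1(\Omega)\times H^1(\Omega)$; boundedness uses $v\in L^\infty$, $\bi{A}\in W^\infty(\mathrm{div},\Omega)$, the trace theorem, and $B\in L(H^{1/2},H^{-1/2})$. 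Weak solutions of \eqref{eqs:forward} are then exactly the $\psi\in H^1(\Omega)$ with $a(\psi,\varphi)=\langle s,\varphi\rangle$ for all $\varphi$, and Proposition~\ref{prop:wellposed} is equivalent to invertibility of the associated operator $\mathcal{A}:H^1(\Omega)\to H^1(\Omega)^*$ with a bounded inverse.

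Next I would split $a=a_0+a_c$, where $a_0$ carries the principal part plus $B_0$ and the $-k^2$ term, i.e.\ $a_0(\psi,\varphi)=\int_\Omega\nabla\psi\cdot\overline{\nabla\varphi}\,\mathrm{d}\bi{x}+\int_\Omega\psi\overline{\varphi}\,\mathrm{d}\bi{x}-\langle B_0\,\mathrm{Tr}\psi,\mathrm{Tr}\psi\rangle+(\text{lower-order bounded terms})$, and $a_c$ collects $B-B_0$ together with the remaining zeroth-order terms that embed compactly (via the compact embedding $H^1(\Omega)\hookrightarrow L^2(\Omega)$ for the bounded Lipschitz domain $\Omega$ and \eqref{eq:assBcompact} for the boundary part); so the operator induced by $a_c$ is compact from $H^1(\Omega)$ to $H^1(\Omega)^*$. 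The crucial point is coercivity of $a_0$ up to a compact perturbation — a Gårding inequality $\Re a_0(\psi,\psi)\ge c\|\psi\|_{H^1}^2-C\|\psi\|_{L^2}^2$. Here $\Re\langle B_0\mathrm{Tr}\psi,\mathrm{Tr}\psi\rangle\le0$ by \eqref{eq:assB0} helps rather than hurts, the $\int|\nabla\psi|^2$ term gives the gradient control, and the first-order term $-2i\int(\bi{A}\cdot\nabla\psi)\overline\psi$ is handled by integrating by parts — using $\bi{A}\cdot\bi{n}=0$ on $\partial\Omega$ from \eqref{eq:assnoflux} and $\mathrm{div}\,\bi{A}\in L^\infty$ — to rewrite $\Re\bigl(-2i\int(\bi{A}\cdot\nabla\psi)\overline\psi\bigr)$ as $\int(\mathrm{div}\,\bi{A})|\psi|^2$, which is an $L^2$-term absorbed into the $-C\|\psi\|_{L^2}^2$ slack; combined with $\Re(-k^2)$ absorbed the same way, this yields the Gårding estimate.

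Having a Gårding inequality plus the compactness of $a_c$, Fredholm theory reduces existence and continuous dependence to injectivity: I must show that the only $\psi\in H^1(\Omega)$ with $a(\psi,\varphi)=0$ for all $\varphi$ is $\psi=0$. This is where Assumption~\ref{ass:well_posed} is used in full force: testing with $\varphi=\psi$ and taking imaginary parts gives $\Im a(\psi,\psi)=-\int_\Omega(\mathrm{div}\,\bi{A}-\Im k^2+\Im v)|\psi|^2\,\mathrm{d}\bi{x}-\Im\langle B\,\mathrm{Tr}\psi,\mathrm{Tr}\psi\rangle=0$; by \eqref{eq:assPositivity} the volume integral is $\ge0$ and by \eqref{eq:assImB} the boundary term is $\ge0$ (strictly, unless $\mathrm{Tr}\psi=0$), so both vanish, forcing $\mathrm{Tr}_{\partial\Omega}\psi=0$. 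Thus $\psi$ solves the interior equation with homogeneous Dirichlet data, hence extends by zero to a compactly supported $H^1(\R^d)$ solution of $(-\Delta-2i\bi{A}\cdot\nabla+v-k^2)\psi=0$; a unique-continuation / Rellich-type argument — or, in the transparent-boundary interpretation where the problem is equivalent to one on $\Rd$ with the Sommerfeld condition, the standard Rellich lemma plus unique continuation — then gives $\psi\equiv0$. I expect this injectivity step, specifically justifying unique continuation for the first-order-perturbed Helmholtz operator with only $L^\infty$ coefficients, to be the main obstacle; everything else is routine once the form splitting and the integration-by-parts identity for the $\bi{A}$-term are in place. (The paper may instead simply \emph{assume} such a uniqueness/UCP hypothesis as part of $\Bfrak_k$, in which case this last paragraph collapses to a citation.)
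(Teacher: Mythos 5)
Your plan follows essentially the same route as the paper's proof: weak formulation, uniqueness by testing with $\psi$ and taking imaginary parts (using $\bi{A}\cdot\bi{n}=0$ and integration by parts to force $\Tr_{\partial\Omega}\psi=0$), extension by zero of the vanishing Cauchy data and a cited unique-continuation theorem (the paper indeed just cites one, as you anticipated), and existence via Gårding/compact perturbation of $B-B_0$ plus the Fredholm alternative. Only a minor sign/Re--Im slip to fix: the identity turning $-2i\int_\Omega(\bi{A}\cdot\nabla\psi)\overline{\psi}\,\mathrm{d}\bi{x}$ into $\int_\Omega(\diver\bi{A})|\psi|^2\,\mathrm{d}\bi{x}$ applies to its \emph{imaginary} part (which is exactly what the uniqueness step needs, and there the volume and boundary contributions are both nonpositive, not nonnegative, so their vanishing follows just the same), while in the Gårding estimate the \emph{real} part of the first-order term is genuinely first order and is absorbed by Cauchy--Schwarz/Young rather than by that identity.
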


\begin{proof}
We only sketch the proof, which is a straightforward modification of similar proofs in \cite{Colton2013,Ihlenburg1998}. The weak formulation of Problem~\eqref{eqs:forward} is given by 
\begin{align}\label{eq:weak_formulation}
\int_{\Omega}\left(\nabla\psi\cdot \nabla\overline{\phi}
-2i \bi{A} \cdot (\nabla\psi)\overline{\phi}
+(v-k^2)\psi\overline{\phi}
\right)\,\mathrm{d}\bi{x}
- \int_{\partial\Omega} B\Tr_{\partial \Omega}\psi\,\Tr_{\partial \Omega}\overline{\phi}\,\mathrm{d}s 
= \int_{\Omega}s\overline{\phi}\,\mathrm{d}\bi{x},
\,\,\phi\in H^1(\Omega).
\end{align}
To show that for $s=0$ this variational problem only has the trivial solution, we choose $\phi=\psi$ and take the imaginary part. 
Noting that ${\Im(-2i\bi{A} \cdot (\nabla\psi)\overline{\psi}) = -\bi{A} \cdot \,2\Re((\nabla\psi)\overline{\psi})= -\bi{A} \cdot \nabla|\psi|^2}$ and using 
a partial integration and \eqref{eq:assnoflux}, we obtain
\[
\int_{\Omega}\left(\operatorname{div} \bi{A}+\Im(v-k^2)\right) |\psi|^2\,\mathrm{d}\bi{x} 
= \Im\int_{\partial\Omega} B\Tr_{\partial \Omega} \psi\,\Tr_{\partial \Omega}\overline{\psi}\,\mathrm{d}\bi{s}\,.
\]
It follows from \eqref{eq:assPositivity} and \eqref{eq:assImB} that both sides must vanish. Hence, $\Tr_{\partial\Omega}\psi = 0$. By elliptic regularity, $\psi\in H^2(\Omega)$ is also a strong solution to \eqref{eqs:forward} with $\frac{\partial\psi}{\partial\bi{n}}=0$ on $\partial\Omega$. 
Due to vanishing Cauchy data on $\partial \Omega$, $\psi$ may be extended by $0$ as a strong solution of the wave equation to the exterior of $\Omega$. Now it follows from unique continuation results (see \cite[Theorem 4.2]{RL:12}) that $\psi$ vanishes identically. 

Using Assumptions~\eqref{eq:assB0} and \eqref{eq:assBcompact}, it can be shown that the sesquilinear form of the variational formulation is coercive up to a compact perturbation. Therefore, the operator representing this sesquilinear form is Fredholm of index $0$. By uniqueness, it is boundedly invertible. 
\end{proof}

If we write the solution operator 
\[
\mathcal{G}_{v,\bi{A}}:H^{-1}_0(\Omega)\to H^1(\Omega),\qquad 
\mathcal{G}_{v,\bi{A}}s:=\psi
\] 
as an integral operator 
\[
(\mathcal{G}_{v,\bi{A}}s)(\bi{x})=\int_{\Omega} G_{v,\bi{A}}(\bi{x}, \bi{y}) s(\bi{y}) \,\mathrm{d}\bi{y},
\]
the kernel $G_{v,\bi{A}}$ of $\mathcal{G}_{v,\bi{A}}$ is the Green's function, which may also be characterized by 
$(-\Delta-2i\bi{A} \cdot \nabla+v -k^2) G_{v,\bi{A}}(\cdot, \bi{x^{\prime}})
=\delta_{\bi{x^{\prime}}}$, 
$\partial_{\bi{n}} G_{v,\bi{A}}(\cdot, \bi{x^{\prime}})-B \Tr_{\partial \Omega} G_{v,\bi{A}}(\cdot, \bi{x^{\prime}})=0$ on $\partial \Omega$.

For certain random processes of interest, $s$ does not belong to $H^{-1}_0(\Omega)$ almost surely. E.g., white noise is in $H^{-s}_0(\Omega)$ almost surely if and only if $s>d/2$. Nevertheless, the solution formula 
\begin{align}\label{eq:Trpsi_by_G}
(\Tr_{\Gamma} \psi)(\bi{x}) = \int_{\Omega_0}G_{v,\bi{A}}(\bi{x},\bi{y})
s(\bi{y})\,\mathrm{d}\bi{y},\qquad \bi{x}\in \Gamma
\end{align}
may still make sense if $G_{v,\bi{A}}(\bi{x},\cdot)$ is sufficiently smooth on $\Omega_0$. This is always the case if the support of $s$ and $v,\bi{A}$ are disjoint or if $s\in H^{-1}_0(\Omega)$ almost surely, which is typically true if $s$ is spatially correlated. Otherwise, we have to impose smoothness conditions on $v$ and $\bi{A}$ such that $G_{v,\bi{A}}$ is sufficiently smooth and $\mathcal{G}$ has suitable 
mapping properties. 

\begin{assumption}\label{ass:solvability_noise}
The solution to~\eqref{eqs:forward} on $\Gamma$ is given by \eqref{eq:Trpsi_by_G}.
\end{assumption} 

Assume we have observations $\Tr_{\Gamma}\psi_1,\dots,\Tr_{\Gamma}\psi_N$ where 
$\psi_j$ solves \eqref{eqs:forward} for 
independent samples $s_1,\dots,s_N$ of $s$. As $\E{s}=0$, we have 
$\E{\Tr_{\Gamma}\psi_j}=0$, and we can compute the correlations by
\begin{align}
\label{eq: Correlation}
\Corr(\bi{x}_1,\bi{x}_2):=\frac{1}{N}\sum_{n=1}^N \Tr_{\Gamma}\psi_n(\bi{x}_1)
\overline{\Tr_{\Gamma}\psi_n(\bi{x}_2)}, \qquad \bi{x}_1, \bi{x}_2\in\Gamma.
\end{align}
This is an unbiased estimator of the covariance 
\begin{align}
\label{eq: Covariance}
C_{v,\bi{A}}(\bi{x}_1,\bi{x}_2):=\operatorname{Cov}
\left(\Tr_{\Gamma}\psi(\bi{x}_1),\Tr_{\Gamma}\psi(\bi{x}_2)\right) 
= \E{\Tr_{\Gamma}\psi(\bi{x}_1)
\overline{\Tr_{\Gamma}\psi(\bi{x}_2)}}
\end{align}
converging in the limit $N\to\infty$. The integral operator 
$(\mathcal{C}[v,\bi{A}]f)(\bi{x}_1):=\int_{\Gamma} C_{v,\bi{A}}(\bi{x}_1,\bi{x}_2)f(\bi{x}_2)\,\mathrm{d}\bi{x}_2$ is the covariance operator 
\begin{align}\label{eq:forward_op}
\mathcal{C}[v,\bi{A}] = \bCov[\Tr_{\Gamma} \mathcal{G}_{v,\bi{A}}s] 
= \Tr_{\Gamma} \mathcal{G}_{v,\bi{A}} \bCov[s]\mathcal{G}_{v,\bi{A}}^*\Tr_{\Gamma}^*.
\end{align}

$\mathcal{C}$ will be the forward operator of our inverse problem. 
Recall that if $C_{v,\bi{A}}\in L^2(\Gamma\times \Gamma)$, then 
$\mathcal{C}[v,\bi{A}]$ belongs to the space of Hilbert-Schmidt operators 
$\HSs{L^2(\Gamma)}$ on $L^2(\Gamma)$, and 
\[
\left\|\mathcal{C}[v,\bi{A}]\right\|_{\mathrm{HS}}=\|C_{v,\bi{A}}\|_{L^2}.
\]

Therefore, $\HSs{L^2(\Gamma)}$ is the natural image space of the forward 
operator. It is a Hilbert space with an inner product 
$\langle T,S\rangle_{\mathrm{HS}}= \mathrm{tr}(S^*T)$. Here $\mathrm{tr}(K)$ denotes the trace 
of a  linear operator $K: \mathbb{H} \to \mathbb{H}$ in a separable Hilbert space $\mathbb{H}$ 
defined by $\mathrm{tr}(K):=\sum_{j=1}^{\infty} \langle K e_j, e_j \rangle_{\mathbb{H}}$ for any  is an orthonormal basis 
$\{e_j:j\in\mathbb{N}\}$  of $\mathbb{H}$. 

Let us also consider the case that in addition to sources 
$s$ in the interior of $\Omega$ there are sources 
$s_{\partial\Omega}$ on the boundary $\partial\Omega$. 
Such sources generate a field 
$\psi(\bi{x}) =  \int_{\partial \Omega} G_{v,\bi{A}}(\bi{x},\bi{y})
s_{\partial\Omega}(\bi{y})\,\mathrm{d}\bi{y}$. 
Its restriction to $\Omega_0$ is given by 
\begin{align}\label{eq:SLP}
(K_{v,\bi{A}} s_{\partial\Omega}) (\bi{x}):=
(\Tr_{\Gamma} \psi)(\bi{x})
= \int_{\partial \Omega} G_{v,\bi{A}}(\bi{x},\bi{y})
s_{\partial\Omega}(\bi{y})\,\mathrm{d}\bi{y},\qquad \bi{x}\in \Gamma, 
\end{align}
which is the single layer potential operator for 
$\Gamma=\partial\Omega$. It is easy to see that 
$K$ admits a factorization 
$K=\Tr_{\Gamma} G_{v, \bi{A}} \Tr_{\partial\Omega}^*$
in the spaces
 \begin{align*}
H^{-1/2}(\partial \Omega) \xrightarrow{\Tr_{\partial\Omega}^*} H^{-1}_0(\Omega)\xrightarrow{G_{v, \bi{A}}} H^1(\Omega) \xrightarrow{\Tr_{\Gamma}} H^{1/2}(\Gamma),
\end{align*}
which implies that 
$K\in L\left(H^{-1/2}(\partial \Omega), H^{1/2}(\Gamma)\right)$ (see \cite[Thm. 1(iii)]{Costabel1988}). 
Therefore, in the presence of boundary sources the measured
covariance operator is given by 
\begin{align*}
\mathcal{C}[v,\bi{A}] 
&= \Tr_{\Gamma} \mathcal{G}_{v,\bi{A}} \bCov[s]\mathcal{G}_{v,\bi{A}}^*\Tr_{\Gamma}^*+K_{v,\bi{A}} \bCov[s_{\partial\Omega}] K^*_{v,\bi{A}}\\
&=\Tr_{\Gamma} \mathcal{G}_{v,\bi{A}} \left(\bCov[s]+
\Tr_{\partial\Omega}^*\bCov[s_{\partial\Omega}]\Tr_{\partial\Omega}\right)
\mathcal{G}_{v,\bi{A}}^*\Tr_{\Gamma}^*.
\end{align*}

Often one assumes that the source process $s$ is spatially uncorrelated and 
\[
\bCov[s]=M_{S}
\] 
for some source strength $S\in L^{\infty}(\Omega_0)$, where $M_S$ denotes 
the multiplication operator $M_{S}f:=S\cdot f$. If $S$ is treated as an additional unknown, the forward operator becomes 
\begin{align}\label{eq:extended_forward_op}
\mathcal{C}[v,\bi{A},S] 
= \Tr_{\Gamma} \mathcal{G}_{v,\bi{A}} \left(M_{S}+\Tr^*_{\partial \Omega} \bCov[s_{\partial\Omega}] \Tr_{\partial \Omega} \right) \mathcal{G}_{v,\bi{A}}^*\Tr_{\Gamma}^*.
\end{align}
Of course, we could also assume that $s_{\partial\Omega}$ is spatially uncorrelated and treat its source strength as a further unknown, but for the sake of notational simplicity, we assume that $\bCov[s_{\partial\Omega}]\in L\left(L^2(\partial\Omega)\right)$ is known.

We first study the continuity and Fr\'echet differentiability of $\mathcal{G}_{v,\bi{A}}$ with respect to the parameters $(v,\bi{A})$. We will assume that $v$ and $\bi{A}$ are known in $\Omega\setminus \Omega_0$. Let $(v_{\mathrm{ref}},A_{\mathrm{ref}})\in \mathfrak{B}_k$ be some reference solution. Then the set $\mathfrak{B}_k$ of admissible parameters in Assumption~\ref{ass:well_posed} satisfies 
\begin{align}\label{eq:defi_XG}
&\mathfrak{B}_k \subset (v_{\mathrm{ref}},A_{\mathrm{ref}}) + \mathbb{X}_{\mathcal{G}}\quad \mbox{with}\quad 
\mathbb{X}_{\mathcal{G}}:= L^{\infty}(\Omega_0)\times W^{\infty}_0(\diver,\Omega_0),
\end{align}
where $W^{\infty}_0(\diver,\Omega_0):=
\{\bi{A}\in W^{\infty}_0(\diver,\Omega_0): 
\bi{A}\cdot\bi{n}=0\text{ on }\partial\Omega_0\}$. 
\begin{lemma}\label{lem:Frechet_G}
Under Assumption \ref{ass:well_posed}, the mapping $\Bfrak_k\to L\left(H^{-1}_0(\Omega), H^1(\Omega)\right)$, $(v,\bi{A})\mapsto \mathcal{G}_{v,\bi{A}}$ is well-defined and continuous, and Fr\'echet differentiable in the interior of $\Bfrak_k$ w.r.t.\ the $\mathbb{X}_{\mathcal{G}}$-topology. The Fr\'echet derivative 
$\mathcal{G}^{\prime}_{v,\bi{A}}:\mathbb{X}_{\mathcal{G}}\to L\left(H^{-1}_0(\Omega), H^1(\Omega)\right)$ at $(v,\bi{A})\in \operatorname{int}(\Bfrak_k)$
is given by 
\[
\mathcal{G}^{\prime}_{v,\bi{A}}(\partial v,\partial \bi{A})
= \mathcal{G}_{v,\bi{A}}(2i M_{\partial \bi{A}}\cdot\nabla -M_{\partial v})\mathcal{G}_{v,\bi{A}}\,.
\]
\end{lemma}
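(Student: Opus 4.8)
The plan is to follow the standard resolvent-perturbation route. First I would set up the resolvent identity: writing $L[v,\bi{A}]:=-\Delta-2i\bi{A}\cdot\nabla+v-k^2$, and denoting by $L[\partial v,\partial\bi{A}]$ the (formal) perturbation $-2i\,M_{\partial\bi{A}}\cdot\nabla+M_{\partial v}$, one has the algebraic identity $\mathcal{G}_{v+\partial v,\bi{A}+\partial\bi{A}}-\mathcal{G}_{v,\bi{A}} = -\mathcal{G}_{v+\partial v,\bi{A}+\partial\bi{A}}\,L[\partial v,\partial\bi{A}]\,\mathcal{G}_{v,\bi{A}}$, valid once both solution operators are known to exist. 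The key mapping-property observation is that the perturbation $L[\partial v,\partial\bi{A}]$, as an operator from $H^1(\Omega)$ to $H^{-1}_0(\Omega)$, is bounded with operator norm controlled by $\|(\partial v,\partial\bi{A})\|_{\mathbb{X}_{\mathcal{G}}}$: the zeroth-order term $M_{\partial v}$ maps $L^2\to L^2$ hence $H^1\to H^{-1}_0$; the first-order term $-2i M_{\partial\bi{A}}\cdot\nabla$ needs the integration-by-parts bound $\langle M_{\partial\bi{A}}\cdot\nabla\psi,\phi\rangle = -\langle\psi,\nabla\cdot(\partial\bi{A}\,\phi)\rangle = -\langle\psi,(\diver\partial\bi{A})\phi\rangle-\langle\psi,\partial\bi{A}\cdot\nabla\phi\rangle$, where the boundary term drops because $\partial\bi{A}\cdot\bi{n}=0$ on $\partial\Omega_0$ (this is exactly why $W^\infty_0(\diver,\Omega_0)$ is built into $\mathbb{X}_{\mathcal{G}}$), so this term is bounded by $\|\partial\bi{A}\|_{W^\infty(\diver,\Omega_0)}$ times $\|\psi\|_{L^2}\|\phi\|_{H^1}$, and hence from $H^1(\Omega)$ to $H^{-1}_0(\Omega)$.

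With that in hand I would first establish well-definedness and continuity. For $(v,\bi{A})\in\operatorname{int}(\Bfrak_k)$, write $L[v+\partial v,\bi{A}+\partial\bi{A}] = L[v,\bi{A}](\Id + \mathcal{G}_{v,\bi{A}}L[\partial v,\partial\bi{A}])$; since $\|\mathcal{G}_{v,\bi{A}}L[\partial v,\partial\bi{A}]\|_{L(H^1,H^1)}\le C\|(\partial v,\partial\bi{A})\|_{\mathbb{X}_{\mathcal{G}}}<1$ for small enough perturbation, a Neumann series gives invertibility of $L[v+\partial v,\bi{A}+\partial\bi{A}]$ on a neighbourhood (this reproves well-posedness near any interior point, consistent with Proposition~\ref{prop:wellposed}), together with the bound $\|\mathcal{G}_{v+\partial v,\bi{A}+\partial\bi{A}}-\mathcal{G}_{v,\bi{A}}\|=O(\|(\partial v,\partial\bi{A})\|_{\mathbb{X}_{\mathcal{G}}})$, which is continuity. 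Then I would rearrange the resolvent identity as
\[
\mathcal{G}_{v+\partial v,\bi{A}+\partial\bi{A}}-\mathcal{G}_{v,\bi{A}}
- \mathcal{G}_{v,\bi{A}}\bigl(2i M_{\partial\bi{A}}\cdot\nabla-M_{\partial v}\bigr)\mathcal{G}_{v,\bi{A}}
= \bigl(\mathcal{G}_{v+\partial v,\bi{A}+\partial\bi{A}}-\mathcal{G}_{v,\bi{A}}\bigr) L[\partial v,\partial\bi{A}]\,\mathcal{G}_{v,\bi{A}},
\]
and the right-hand side is $O(\|(\partial v,\partial\bi{A})\|_{\mathbb{X}_{\mathcal{G}}}^2)$ in $L(H^{-1}_0(\Omega),H^1(\Omega))$ by combining the continuity estimate with the perturbation bound; this proves Fr\'echet differentiability with the claimed derivative. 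Linearity and boundedness of $(\partial v,\partial\bi{A})\mapsto\mathcal{G}_{v,\bi{A}}(2iM_{\partial\bi{A}}\cdot\nabla-M_{\partial v})\mathcal{G}_{v,\bi{A}}$ as a map $\mathbb{X}_{\mathcal{G}}\to L(H^{-1}_0(\Omega),H^1(\Omega))$ is immediate from the same mapping properties.

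The main obstacle, and the only place requiring genuine care rather than bookkeeping, is the first-order term $M_{\partial\bi{A}}\cdot\nabla$: one must verify it really does define a bounded operator $H^1(\Omega)\to H^{-1}_0(\Omega)$ with norm controlled in the $W^\infty(\diver)$ norm, and in particular that the integration by parts used to see this produces no boundary contribution on $\partial\Omega_0$ — this is guaranteed by the constraint $\partial\bi{A}\cdot\bi{n}=0$ encoded in $W^\infty_0(\diver,\Omega_0)$, and by the fact that $\partial v,\partial\bi{A}$ are supported in $\overline{\Omega_0}$ so no contribution from $\partial\Omega$ arises either. Everything else (the Neumann series, the resolvent identity, extracting the quadratic remainder) is routine once this boundedness is in place.
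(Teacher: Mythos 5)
Your proposal is correct and takes essentially the same route as the paper: the paper also writes $\mathcal{G}_{v,\bi{A}}=L_{v,\bi{A}}^{-1}$ with $L_{v,\bi{A}}\in L\left(H^1(\Omega),H^{-1}_0(\Omega)\right)$ affine and continuous in the parameters and then invokes continuity and differentiability of operator inversion ($\mathcal{G}'=-\mathcal{G}L'\mathcal{G}$), while your Neumann-series/resolvent-identity argument is precisely the standard proof of that cited fact, with your boundedness of $-2iM_{\partial\bi{A}}\cdot\nabla+M_{\partial v}:H^1(\Omega)\to H^{-1}_0(\Omega)$ supplying the needed continuity of the affine map. One cosmetic remark: the remainder identity should carry a minus sign, $-\bigl(\mathcal{G}_{v+\partial v,\bi{A}+\partial\bi{A}}-\mathcal{G}_{v,\bi{A}}\bigr)L[\partial v,\partial\bi{A}]\,\mathcal{G}_{v,\bi{A}}$, which is irrelevant for the quadratic bound you need.
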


\begin{proof}
Again, we only sketch the proof and refer to 
\cite[\S 5.3]{Colton2013} for a more detailed proof of a 
similar result. 
Let $L_{v,\bi{A}}: H^1(\Omega)\to H^{-1}_0(\Omega)$ denote the operator 
associated to the sesquilinear form in the weak formulation~\eqref{eq:weak_formulation} such 
that $G_{v,\bi{A}}=L_{v,\bi{A}}^{-1}$. 
$L_{v,\bi{A}}$ is continuous and affine linear in 
the parameters. 
As $L^{\prime}_{v,\bi{A}}(\partial v,\partial\bi{A})
=-2iM_{\partial\bi{A}}\cdot\nabla+M_{\partial v}$, the result follows from the continuity of operator inversion and the formula for its derivative, 
$\mathcal{G}^{\prime}_{v,\bi{A}}(\partial v,\partial A) = 
-\mathcal{G}_{v,\bi{A}}L^{\prime}_{v,\bi{A}}(\partial v,\partial\bi{A})\mathcal{G}_{v,\bi{A}}$.
\end{proof}

\section{Diagonals of operator kernels}
\label{sec: diagonals}
The present section serves as a preparation for computing adjoints of the Fr\'echet derivative of the forward operator defined by \eqref{eq:extended_forward_op}. A crucial step will be the characterization of adjoints of the mapping 
\[
S\mapsto M_S
\]
(in a sense to be specified later). 

In the discrete setting, $M_{S}$ corresponds to diagonal matrices $\diag(S)\in \C^{d\times d}$ with diagonal $S$. The adjoint of the mapping 
\[
\mathcal{M}: \mathbb{C}^d \rightarrow \mathbb{C}^{d \times d},
\qquad S\mapsto \diag(S)
\]
with respect to the Frobenius norm is given by 
\[
\Diag A=\Diag(A),
\]
where $\Diag(A)\in \C^d$ denotes the diagonal of the matrix $A\in \C^{d\times d}$. 

We wish to generalize this to an infinite dimensional setting, with the Frobenius norm replaced by the Hilbert-Schmidt norm. 
Recall that any operator $\mathcal{A}\in \HSs{L^2(\Omega)}$ has a Schwartz kernel $A\in L^2(\Omega\times \Omega)$ such that $(\mathcal{A}\varphi)(\bi{x})=\int_{\Omega}A(\bi{x},\bi{y})\varphi(\bi{y})\,\mathrm{d}y$ and $\|\mathcal{A}\|_{\operatorname{HS}}=\|A\|_{L^2}$. It is tempting to define $(\Diag\mathcal{A})(\bi{x}):=A(\bi{x},\bi{x})$. However, as $A$ is only a $L^2$-function and the diagonal $\{(\bi{x},\bi{x}):\bi{x}\in\Omega\}\subset \Omega\times \Omega$ has measure zero, the restriction of $A$ to the diagonal is not well-defined. 

To address this problem, we first recall that for Hilbert spaces $\mathbb{X}$, $\mathbb{Y}$ and $p\in [1,\infty)$ the $p$-Schatten class $S_p\left(\mathbb{X}, \mathbb{Y}\right)$ consists of all compact operator $\mathcal{A}\in L\left(\mathbb{X},\mathbb{Y}\right)$ for which the singular values $\sigma_j(A)$ (counted with multiplicity) form a $\ell^p$ sequence. $S_p\left(\mathbb{X}, \mathbb{Y}\right)$ is a Banach space equipped with the norm $\|\mathcal{A}\|_{S_p}:=(\sum_j \sigma_j(\mathcal{A})^p)^{1/p}$. $S_2(\mathbb{X},\mathbb{Y})$ coincides with $\HS{\mathbb{X}}{\mathbb{Y}}$. We write $S_p\left(\mathbb{X}\right):=S_p\left(\mathbb{X},\mathbb{X}\right)$. 
The elements of $S_1(\mathbb{X})$ are called trace class operators. For such operators, the trace $\tr(\mathcal{A}):=\sum_k \langle \mathcal{A} e_j,e_j\rangle$ 
is well-defined for any orthonormal basis $\{e_k\}$ of $\mathbb{X}$, and $|\tr(\mathcal{A})|\leq \|\mathcal{A}\|_{S_1}$.

Let us first recall Mercer's theorem: It states that for a positive definite operator $\mathcal{A}$ with continuous kernel $A$, we have
\[
\tr{\mathcal{A}} = \int_{\Omega}A(\bi{x},\bi{x})\,\mathrm{d} \bi{x}
\]
and $A(\bi{x},\bi{x})\geq 0$ for all $\bi{x}$. Since not all (positive semidefinite) Hilbert-Schmidt operators are trace class, we cannot expect that $\bi{x}\mapsto A(\bi{x},\bi{x})$ belongs to $L^1(\Omega)$ for general Hilbert-Schmidt operators. 
However, with the help of Mercer's theorem, we can show the following result. 

\begin{proposition}
\label{prop:diagonal}
Let $\Omega\subset\mathbb{R}^d$ be open and non-empty. Then there 
exists a unique bounded linear operator 
\[
\Diag: S_1\left(L^2(\Omega)\right)\to L^1(\Omega)
\]
such that 
\[
\Diag(\mathcal{A})(\bi{x}) = A(\bi{x},\bi{x}),\qquad \bi{x}\in\Omega
\]
for all operators $\mathcal{A}\in S_1(L^2(\Omega))$ with continuous kernel $A$. Moreover, 
\begin{align}\label{eq:trace_formula}
\tr(\mathcal{A}) = \int_{\Omega} \Diag(\mathcal{A})\,\mathrm{d} \bi{x}\,.
\end{align}
\end{proposition}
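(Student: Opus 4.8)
The plan is to build $\Diag$ by density. The subspace of trace-class operators with continuous kernel is dense in $S_1(L^2(\Omega))$ (for instance, finite-rank operators with smooth kernels are dense, and these have continuous kernels), so it suffices to establish the \emph{a priori} bound
\begin{equation}\label{eq:apriori}
\|\bi{x}\mapsto A(\bi{x},\bi{x})\|_{L^1(\Omega)}\leq \|\mathcal{A}\|_{S_1}
\end{equation}
for every $\mathcal{A}\in S_1(L^2(\Omega))$ with continuous kernel $A$; then $\Diag$ extends uniquely to a bounded operator on all of $S_1(L^2(\Omega))$, and \eqref{eq:trace_formula} follows from Mercer's theorem for the dense subclass and passes to the limit by continuity of both $\tr$ (with $|\tr\mathcal{A}|\leq\|\mathcal{A}\|_{S_1}$) and of integration against $L^1$.

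To prove \eqref{eq:apriori}, I would fix $\mathcal{A}\in S_1$ with continuous kernel $A$ and, for an arbitrary measurable $\phi:\Omega\to\C$ with $|\phi|=1$, consider the operator $\mathcal{B}:=M_{\overline{\phi}}\,\mathcal{A}$, whose kernel is $(\bi{x},\bi{y})\mapsto\overline{\phi(\bi{x})}A(\bi{x},\bi{y})$; this is still continuous, and $\|\mathcal{B}\|_{S_1}\leq\|M_{\overline{\phi}}\|_{L(L^2)}\|\mathcal{A}\|_{S_1}=\|\mathcal{A}\|_{S_1}$ since multiplication by a unimodular function is an isometry of $L^2(\Omega)$. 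If $\mathcal{B}$ were positive semidefinite, Mercer's theorem would give $\tr\mathcal{B}=\int_\Omega\overline{\phi(\bi{x})}A(\bi{x},\bi{x})\,\mathrm{d}\bi{x}$, and choosing $\phi$ to match the phase of $\bi{x}\mapsto A(\bi{x},\bi{x})$ (which is continuous) would yield $\int_\Omega|A(\bi{x},\bi{x})|\,\mathrm{d}\bi{x}=|\tr\mathcal{B}|\leq\|\mathcal{A}\|_{S_1}$. Since $\mathcal{B}$ is not positive in general, the cleanest route is the polar decomposition $\mathcal{A}=\mathcal{U}|\mathcal{A}|$ together with the observation that any $\mathcal{A}\in S_1$ splits as a linear combination of four positive trace-class operators (or, more efficiently, write $\mathcal{A}=\mathcal{U}|\mathcal{A}|^{1/2}\cdot|\mathcal{A}|^{1/2}$ and apply Cauchy--Schwarz in $S_2$); in either case one reduces to Mercer's theorem applied to positive pieces and controls the diagonal of each by its trace. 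One must check that the kernels of the positive pieces can be taken continuous when $A$ is — this is where I would be slightly careful, and a convenient fix is to first prove \eqref{eq:apriori} for the dense class of finite-rank smooth-kernel operators, where everything is elementary, and only then extend.

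Uniqueness is immediate: two bounded operators into $L^1(\Omega)$ agreeing on a dense subspace coincide. The main obstacle is the one flagged above — making the reduction to Mercer's theorem rigorous without assuming more regularity than continuity of $A$, i.e. handling the fact that $|\mathcal{A}|$, $\mathcal{A}_\pm$, etc., need not have continuous kernels even when $\mathcal{A}$ does. I expect this is best circumvented entirely by establishing the estimate on finite-rank operators with continuous (even smooth) kernels first, where the diagonal, the trace formula, and the phase-matching argument are all transparent, and invoking density of that class in $S_1(L^2(\Omega))$ to conclude; the identity \eqref{eq:trace_formula} then extends by the same continuity argument.
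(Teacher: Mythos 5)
Your overall architecture (define the diagonal on a dense class, prove an a priori $S_1\to L^1$ bound, extend by density, get uniqueness for free) matches the paper's, but the decisive step is missing. The proposition asserts $\Diag(\mathcal{A})(\bi{x})=A(\bi{x},\bi{x})$ for \emph{every} $\mathcal{A}\in S_1(L^2(\Omega))$ with continuous kernel, and this is logically equivalent to your a priori bound $\|A(\cdot,\cdot)\|_{L^1}\leq \|\mathcal{A}\|_{S_1}$ (or any bound) holding on that \emph{whole} class: a bounded operator that restricts to the pointwise diagonal on the continuous-kernel operators forces the estimate there. Your fallback --- prove the estimate only for finite-rank operators with smooth kernels and then ``invoke density'' --- therefore proves a strictly weaker statement: it produces \emph{some} bounded operator on $S_1(L^2(\Omega))$, but since $S_1$-convergence only controls kernels in $L^2(\Omega\times\Omega)$, and the diagonal is a null set of $\Omega\times\Omega$, nothing in the argument shows that this extension computes $A(\bi{x},\bi{x})$ for a general continuous-kernel trace-class operator (say, a positive one whose eigenfunctions are badly discontinuous). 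Your primary route to the bound on the full class also fails as written: the phase of $\bi{x}\mapsto A(\bi{x},\bi{x})$ is in general discontinuous on its zero set, so $M_{\overline{\phi}}\mathcal{A}$ need not have a continuous kernel, and in any case it is not positive semidefinite, so Mercer does not apply; the polar-decomposition/four-positive-parts repair runs exactly into the obstruction you flag yourself (the positive parts of a continuous-kernel operator need not have continuous kernels) and is left unresolved. So the identification of the extension with the pointwise diagonal on the full continuous-kernel class --- which is the actual content of the proposition and what is used later in Lemma~\ref{lem:M_adjoint} --- is not established.

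For comparison, the paper's proof never decomposes a continuous-kernel operator. It first treats positive semidefinite $\mathcal{A}$: writing $\mathcal{A}=\mathcal{B}^*\mathcal{B}$ with $\mathcal{B}$ Hilbert--Schmidt and approximating the kernel of $\mathcal{B}$ in $L^2(\Omega\times\Omega)$ by continuous kernels yields approximants $\mathcal{B}_n^*\mathcal{B}_n$ that are \emph{automatically} positive semidefinite with continuous kernels; Mercer applies to each (nonnegative diagonal integrating to the trace), Cauchy--Schwarz on the factors shows the diagonals are Cauchy in $L^1(\Omega)$, and for a psd $\mathcal{A}$ that itself has a continuous kernel the classical Mercer theorem gives the identification directly. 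A general $\mathcal{A}$ is then split as $\mathcal{P}_1-\mathcal{P}_2+i\mathcal{P}_3-i\mathcal{P}_4$ with $\|\mathcal{P}_j\|_{S_1}\leq 2\|\mathcal{A}\|_{S_1}$, and for the trace formula \eqref{eq:trace_formula} and the local-averaging construction of $\Diag$ the paper can fall back on the cited result of Brislawn. If you want to salvage your version, the factorization you mention in passing, $\mathcal{A}=\bigl(|\mathcal{A}|^{1/2}\mathcal{U}^*\bigr)^*\,|\mathcal{A}|^{1/2}$ with both Hilbert--Schmidt factors approximated by continuous kernels, does give the bound with constant $1$; but you must then still prove, e.g.\ by a local-averaging/Lebesgue-point argument, that for a continuous kernel the resulting limit agrees with $\bi{x}\mapsto A(\bi{x},\bi{x})$, rather than deferring this to density.
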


Eq.~\eqref{eq:trace_formula} is shown in \cite[Thm.~3.5]{brislawn:88} where it is also shown that $\Diag(\mathcal{A})$ can be constructed by local averaging, but the first part is not explicitly stated. We sketch an alternative, more elementary proof:

\begin{proof}[Proof of Proposition \ref{prop:diagonal}]
If $\mathcal{A}$ is positive semidefinite, it may be factorized as $\mathcal{A}=\mathcal{B}^*\mathcal{B}$ with $\mathcal{B}\in \HSs{L^2(\Omega)}$ and $\|\mathcal{A}\|_{S_1}=\|\mathcal{B}\|_{S_2}^2$, e.g., by choosing $\mathcal{B}=\mathcal{A}^{1/2}$. 
By density of $C(\overline{\Omega}\times \overline{\Omega})$ in $L^2(\Omega\times \Omega)$, there exists a sequence $(B_n)$ converging to $B$ in $L^2(\Omega\times\Omega)$. For the corresponding operators $\mathcal{B}_n$ it follows that 
$\lim_{n\to\infty}\|\mathcal{B}_n-\mathcal{B}\|_{\mathrm{HS}}=0$ and $\lim_{n\to\infty}\|\mathcal{A}_n-\mathcal{A}\|_{S_1}=0$ for $\mathcal{A}_n:=\mathcal{B}_n^*\mathcal{B}_n$ (see Prop.~\ref{prop: p-Schatten-embeddings}, part~\ref{it:Schatten_composition} below). Thus, we have constructed a sequence of positive semidefinite operators with continuous kernels converging to $\mathcal{A}$ in $S_1\left(L^2(\Omega)\right)$, and the statement follows from the classical Mercer theorem. 

We decompose a general $\mathcal{A} \in S_1\left(L^2(\Omega)\right)$ as linear combination of trace class operators: 
We start with $\mathcal{A}=\Re(\mathcal{A})+i \Im(\mathcal{A})$ where 
$\Re(\mathcal{A}):=\frac{1}{2}(\mathcal{A}+\mathcal{A}^*)$
and
$\Im(\mathcal{A}):=\frac{1}{2i}(\mathcal{A}-\mathcal{A}^*)$. 
There exists an expansion $\Re(\mathcal{A})=\sum_{k=1}^{\infty} \lambda_k \psi_k \otimes \psi_k$. We define $P_1:=\sum_{k=1}^{\infty} \operatorname{max}(\lambda_k, 0) \psi_k \otimes \psi_k, P_2:=\sum_{k=1}^{\infty} \operatorname{max}(-\lambda_k, 0) \psi_k \otimes \psi_k$ such that $\Re(\mathcal{A})=P_1-P_2$ with positive semidefinite $P_1, P_2 \in S_1\left(L^2(\Omega_0)\right)$. Therefore, a general $\mathcal{A}\in S_1\left(L^2(\Omega)\right)$ can be written as a linear combination of positive semi-definite trace class operators: 
$\mathcal{A} = \mathcal{P}_1- \mathcal{P}_2 + i\mathcal{P}_3 - i\mathcal{P}_4$ where $\Vert P_1 \Vert_{S_1},\Vert P_2 \Vert_{S_1} \leq \Vert \Re (\mathcal{A}) \Vert_{S_1}, \Vert P_3 \Vert_{S_1},\Vert P_4 \Vert_{S_1} \leq \Vert \Im (\mathcal{A}) \Vert_{S_1}$.
By the Courant-Fischer characterization 
$\sigma_n(\mathcal{A})=\inf\{\|\mathcal{A}-\mathcal{F}\|:\operatorname{rank}(\mathcal{F})\leq n\}$, we get $\sigma_{2n}(\Re(\mathcal{A})),
\sigma_{2n}(\Im(\mathcal{A}))\leq \sigma_n(\mathcal{A})$ and hence 
$\|\Re(\mathcal{A})\|_{S_1},
\|\Im(\mathcal{A})\|_{S_1}\leq 2\|\mathcal{A}\|_{S_1}$. It follows that $\|\mathcal{P}_j\|_{S_1}\leq 2\|\mathcal{A}\|_{S_1}$. 
Now we can apply the first proven special case to 
all $\mathcal{P}_j$ to obtain the result. 
\end{proof}

To speak of an adjoint of the operator $\mathcal{M}:S \mapsto M_S$, we have to treat $M_S$ in some space with a dual pairing. We will use 
Hilbert-Schmidt spaces between suitable Sobolev spaces. (Recall that $M_S:L^2(\Omega)\to L^2(\Omega)$ is not compact in general.)

Note that a Gelfand triple $\mathbb{V}^{\prime}\hookrightarrow 
\mathbb{H}\hookrightarrow \mathbb{V}$ 
of Hilbert spaces induces Gelfand triple 
\[
\HS{\mathbb{V}}{\mathbb{V}^{\prime}}\hookrightarrow \HS{\mathbb{H}}{\mathbb{H}} \hookrightarrow \HS{\mathbb{V}^{\prime}}{\mathbb{V}}
\]
of Hilbert-Schmidt spaces with dual pairing, given by 
$\langle A,B\rangle_{\operatorname{HS}} := \tr(B^*A)$ for 
$A\in \HS{\mathbb{V}}{\mathbb{V}^{\prime}}$ and $B\in\HS{\mathbb{V}^{\prime}}{\mathbb{V}}$. 

We give some preliminary results on $p$-Schatten class embeddings. 

\begin{proposition}
    \label{prop: p-Schatten-embeddings}
    Let $\Omega \subset \R^d$ be a bounded Lipschitz domain and let $\mathbb{X}, \mathbb{Y}, \mathbb{Z}$ be Hilbert spaces. Then the following holds true:
    \begin{enumerate}
        \item\label{it:Schatten_embedding} The Sobolev embedding: $j: H^m(\Omega) \hookrightarrow H^l(\Omega)$ is an element in the Schatten class $S_p\left(H^m(\Omega), H^l(\Omega)\right)$ if and only if $p > \frac{d}{m-l}$.
        \item\label{it:Schatten_composition} 
        Let $p, q, r>0$ satisfy $\frac{1}{p}+\frac{1}{q}=\frac{1}{r}$ and let $A \in S_p\left(\mathbb{X}, \mathbb{Y}\right), B \in S_q\left(\mathbb{X}, \mathbb{Y}\right)$. Then, $BA \in S_r\left(\mathbb{X}, \mathbb{Z}\right)$ and we have the bound: \[\Vert BA \Vert_{S_r\left(\mathbb{X}, \mathbb{Z}\right)} \leq 2^{1/r} \Vert A \Vert_{S_p\left(\mathbb{X}, \mathbb{Y}\right)} \Vert B \Vert_{S_q\left(\mathbb{Y}, \mathbb{Z}\right)}. \]
        \item\label{it:Schatten_composition_2} Let $A \in S_p\left(\mathbb{X}, \mathbb{Y}\right), B \in L\left(\mathbb{Y}, \mathbb{Z}\right), C \in L\left(\mathbb{Z}, \mathbb{X}\right)$. Then, $BA \in S_p\left(\mathbb{X}, \mathbb{Z}\right), AC \in S_p\left(\mathbb{Z}, \mathbb{Y}\right)$ and we have the bounds:
        \begin{align*}
            \Vert BA \Vert_{S_p\left(\mathbb{X}, \mathbb{Z}\right)} \leq \Vert A \Vert_{S_p\left(\mathbb{X}, \mathbb{Y}\right)} \Vert B \Vert, \qquad \Vert AC \Vert_{S_p\left(\mathbb{Z}, \mathbb{Y}\right)} \leq \Vert A \Vert_{S_p\left(\mathbb{X}, \mathbb{Y}\right)} \Vert C \Vert.
        \end{align*}
        \item Let $p, q>1$ with $\frac{1}{p}+\frac{1}{q}=1$. Then, $S_p\left(\mathbb{X}, \mathbb{Y}\right)^{\prime}=S_q\left(\mathbb{Y}^{\prime}, \mathbb{X}^{\prime}\right)$ with the dual pairing $\langle A, B \rangle=\tr(B^* A)$ for $A \in S_p\left(\mathbb{X}, \mathbb{Y}\right)$ and $B \in S_q\left(\mathbb{X}^{\prime}, \mathbb{Y}^{\prime}\right)$. 
    \end{enumerate}
\end{proposition}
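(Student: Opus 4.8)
The plan is to reduce each of the four assertions to standard facts about singular values — the Courant--Fischer min--max principle, submultiplicativity of singular values under composition, and the classical trace duality of Schatten ideals over a \emph{single} Hilbert space. The only genuinely analytic ingredient is the spectral asymptotics of Sobolev embeddings needed for part~\ref{it:Schatten_embedding}; parts~\ref{it:Schatten_composition}--\ref{it:Schatten_composition_2} and the duality statement are then essentially formal.

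For part~\ref{it:Schatten_embedding} I would establish the two-sided bound $c_1 N^{-(m-l)/d}\le \sigma_N(j)\le c_2 N^{-(m-l)/d}$ for the singular values of $j\colon H^m(\Omega)\hookrightarrow H^l(\Omega)$ and then note that $\sum_N \sigma_N(j)^p<\infty$ exactly when $p(m-l)/d>1$, i.e.\ $p>d/(m-l)$. This is a Weyl-type law for the Sobolev embedding. For the upper bound I would use a bounded extension operator $E\colon H^m(\Omega)\to H^m(\R^d)$ (available since $\Omega$ is bounded Lipschitz), multiply by a cutoff supported in a large cube and periodize, so that $j$ factors through the embedding $H^m(\mathbb{T}^d)\hookrightarrow H^l(\mathbb{T}^d)$; the latter is diagonalized by Fourier modes with singular values $(1+|n|^2)^{-(m-l)/2}$, $n\in\mathbb{Z}^d$, and lattice-point counting yields the $N^{-(m-l)/d}$ decay, which transfers to $j$ via the min--max principle together with the ideal property of part~\ref{it:Schatten_composition_2}. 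For the lower bound I would pick a cube $Q$ with $\overline{Q}\subset\Omega$: extension by zero maps $H^m_0(Q)$ isometrically into $H^m(\Omega)$ with comparable $H^l$-norms, so $j$ dominates a ``subembedding'' unitarily equivalent to $H^m_0(Q)\hookrightarrow H^l(Q)$, whose singular values are bounded below by $c\,N^{-(m-l)/d}$ (again by Fourier analysis on the cube), and min--max forces the same lower bound on $\sigma_N(j)$. Setting up this comparison so that both bounds survive on a merely Lipschitz domain is, I expect, the \emph{main obstacle} of the whole proposition.

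For part~\ref{it:Schatten_composition} I would invoke submultiplicativity $\sigma_{m+n-1}(BA)\le \sigma_m(B)\,\sigma_n(A)$ (a direct consequence of min--max), specialize to $m=n$ to get $\sigma_{2n-1}(BA),\sigma_{2n}(BA)\le \sigma_n(B)\,\sigma_n(A)$, hence
\[
\sum_{k\ge 1}\sigma_k(BA)^r \le 2\sum_{n\ge 1}\bigl(\sigma_n(A)\,\sigma_n(B)\bigr)^r ,
\]
and then apply Hölder's inequality for sequences with exponents $p/r$ and $q/r$ (note $r/p+r/q=1$); this gives $\|BA\|_{S_r}\le 2^{1/r}\|A\|_{S_p}\|B\|_{S_q}$ and in particular $BA\in S_r(\mathbb{X},\mathbb{Z})$. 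Part~\ref{it:Schatten_composition_2} is the degenerate case in which one factor is only bounded: min--max yields $\sigma_n(BA)\le\|B\|\,\sigma_n(A)$ and $\sigma_n(AC)\le\|C\|\,\sigma_n(A)$ for every $n$, from which the stated $S_p$-bounds and memberships are immediate.

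For the duality statement I would deduce it from the classical fact $S_p(\mathbb{H})'=S_q(\mathbb{H})$ with pairing $(A,B)\mapsto\operatorname{tr}(B^*A)$ on a single Hilbert space $\mathbb{H}$ (see, e.g., Gohberg--Krein or Simon). Using the Riesz isomorphisms $\mathbb{X}\cong\mathbb{X}'$ and $\mathbb{Y}\cong\mathbb{Y}'$, I would transport everything to $\mathbb{H}:=\mathbb{X}\oplus\mathbb{Y}$, identifying $A\in S_p(\mathbb{X},\mathbb{Y})$ with the operator on $\mathbb{H}$ that maps the first summand to the second by $A$ and vanishes otherwise, and similarly for elements of $S_q(\mathbb{Y}',\mathbb{X}')$; under this identification Schatten norms are preserved and the block-matrix trace reproduces the pairing $\operatorname{tr}(B^*A)$, so the single-space duality applies verbatim. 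The only points left to check are non-degeneracy of the pairing and that the functional $A\mapsto\operatorname{tr}(B^*A)$ has norm exactly $\|B\|_{S_q}$, both of which follow from the polar decomposition of $B$ and an extremizing choice of $A$ built from the singular system of $B$.
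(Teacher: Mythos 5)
Your proposal is correct, but it follows a genuinely different route from the paper, which treats this proposition almost entirely by citation: part (i) is quoted from Gramsch's theorem on Schatten properties of Sobolev embeddings, parts (ii)--(iii) from Lemma~16.7 of Meise--Vogt, and only part (iv) is argued, via the H\"older-type bound of part (ii), a Hahn--Banach inclusion chain $S_p\subseteq S_q'\subseteq S_p''$, and reflexivity of $S_p$ obtained from uniform convexity (Fack--Kosaki) plus Milman--Pettis. You instead reconstruct everything from singular-value inequalities: Weyl-type two-sided asymptotics $\sigma_N(j)\asymp N^{-(m-l)/d}$ for (i) (extension--cutoff--periodization for the upper bound, an interior cube with zero-extension and min--max for the lower bound, which is indeed the only genuinely analytic work and is needed for the ``only if'' direction), Fan's inequality $\sigma_{m+n-1}(BA)\le\sigma_m(B)\sigma_n(A)$ plus sequence H\"older for (ii) with exactly the stated constant $2^{1/r}$, the ideal property for (iii), and for (iv) a reduction to the classical one-space trace duality $S_p(\mathbb{H})'=S_q(\mathbb{H})$ by embedding $S_p(\mathbb{X},\mathbb{Y})$ as a corner of $S_p(\mathbb{X}\oplus\mathbb{Y})$. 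Your route buys self-containedness, explicit constants, and norm attainment via the singular system, whereas the paper's route is shorter and its reflexivity argument avoids block-operator bookkeeping. One small step you should make explicit in (iv): surjectivity of the representation requires extending a functional on the corner subspace to $S_p(\mathbb{X}\oplus\mathbb{Y})$ (Hahn--Banach, or the norm-one corner projection $\tilde B\mapsto P_{\mathbb{Y}}\tilde B J_{\mathbb{X}}$, which is contractive on $S_q$ by your part (iii)) and then discarding the other blocks; together with the Riesz identifications (which are conjugate-linear, harmless for the norm statement) this completes the ``verbatim'' transfer you invoke.
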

\begin{proof}
    Part (i) follows from Theorem~1 of \cite{Gramsch1968}. Part (ii) and (iii) follow from Lemma~16.7 of \cite{Meise1992}. 

    Let $A \in S_p\left(\mathbb{X}, \mathbb{Y}\right)$ and $B \in S_q\left(\mathbb{X}^{\prime}, \mathbb{Y}^{\prime}\right)$. By part (ii) and the boundedness of the trace in $S_1$ (\cite[Lemma~16.23]{Meise1992}), we get: 
    $|\langle A, B \rangle|=|\tr(B^* A)| \leq \Vert B^* A \Vert_{S_1\left(\mathbb{Y}\right)} \leq \Vert B^* \Vert_{S_q\left(\mathbb{Y}, \mathbb{X} \right)}  \Vert A \Vert_{S_p\left(\mathbb{X}, \mathbb{Y}\right)}=\Vert B \Vert_{S_q\left(\mathbb{X}^{\prime}, \mathbb{Y}^{\prime} \right)}  \Vert A \Vert_{S_p\left(\mathbb{X}, \mathbb{Y}\right)}$. Hence, $S_q\left(\mathbb{Y}^{\prime}, \mathbb{X}^{\prime}\right) \subseteq S_p\left(\mathbb{X}, \mathbb{Y}\right)^{\prime}$ and $S_p\left(\mathbb{X}, \mathbb{Y}\right) \subseteq S_q\left(\mathbb{Y}^{\prime}, \mathbb{X}^{\prime}\right)^{\prime}$. By the Hahn-Banach theorem, we have the sequence
    \begin{align*}
        S_p(\mathbb{X}, \mathbb{Y}) \subseteq S_q\left(\mathbb{Y}^{\prime}, \mathbb{X}^{\prime}\right)^{\prime} \subseteq S_p(\mathbb{X}, \mathbb{Y})^{\prime \prime}.
    \end{align*}
    $S_p\left(\mathbb{X}, \mathbb{Y}\right)$ is a uniformly convex Banach space (\cite[Chapter~5]{Fack1986}) and therefore reflexive by Milman–Pettis theorem (\cite{Pettis1939}). Hence, $S_p\left(\mathbb{X}, \mathbb{Y}\right)^{\prime \prime} = S_p\left(\mathbb{X}, \mathbb{Y}\right)$ and the assertion follows.
\end{proof}

Using this proposition, we can prove that multiplication operators are Hilbert-Schmidt in suitable Sobolev spaces:

\begin{lemma}\label{lem:MS}
Let $\Omega\subset\mathbb{R}^d$ be a bounded Lipschitz domain, $S\in L^{\infty}(\Omega)$, and $s>d/4$, $s-1/2\notin \mathbb{N}_0$. (In particular, for $d\in\{2,3\}$ we may 
choose $s=1$.) 
Then 
$M_S\in \HS{H^s(\Omega)}{H^{-s}_0(\Omega)}$, and the following mapping 
is continuous
\begin{align}\label{eq:defiM}
\begin{aligned}
\mathcal{M}:&L^{\infty}(\Omega)\to \HS{H^s(\Omega)}{H^{-s}_0(\Omega)},\\
& S\mapsto M_S.
\end{aligned}
\end{align}
\end{lemma}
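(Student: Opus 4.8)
The plan is to factor $M_S\colon H^s(\Omega)\to H^{-s}_0(\Omega)$ through $L^2(\Omega)$ and to exploit that, under the hypothesis $s>d/4$, the two Sobolev embeddings that appear are both $4$-Schatten class, so that their composition with a bounded multiplication operator in between is Hilbert--Schmidt by the Hölder-type inequality of Proposition~\ref{prop: p-Schatten-embeddings}. Since $S\mapsto M_S$ is linear, a bound $\|M_S\|_{\operatorname{HS}}\le C\|S\|_{L^\infty}$ with $C$ independent of $S$ immediately gives continuity of $\mathcal M$, so the whole statement reduces to this estimate.

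First I would record the factorization. Using the duality $H^{s}(\Omega)^*=H^{-s}_0(\Omega)$ (valid since $\Omega$ is Lipschitz and $s-1/2\notin\mathbb N_0$), one has
\[
M_S = \iota_2 \circ \widetilde M_S \circ \iota_1,
\]
where $\iota_1\colon H^s(\Omega)\hookrightarrow L^2(\Omega)$ is the Sobolev embedding, $\widetilde M_S\colon L^2(\Omega)\to L^2(\Omega)$ is multiplication by $S$ (bounded, $\|\widetilde M_S\|\le\|S\|_{L^\infty}$), and $\iota_2\colon L^2(\Omega)\hookrightarrow H^{-s}_0(\Omega)$ is the canonical embedding, which under the above duality is exactly the transpose $\iota_1^*$: indeed $\langle \iota_2 f,u\rangle=\int_\Omega f\,\overline u=\langle f,\iota_1 u\rangle_{L^2}$ for $u\in H^s(\Omega)$, $f\in L^2(\Omega)$. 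That this composition really equals $M_S$ is seen by pairing: both sides applied to $u\in H^s(\Omega)$ act on $v\in H^s(\Omega)$ by $\int_\Omega S u\,\overline v$.

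Next I would apply Proposition~\ref{prop: p-Schatten-embeddings}. Part~(i) with $m=s$, $l=0$ gives $\iota_1\in S_4\!\left(H^s(\Omega),L^2(\Omega)\right)$ precisely because $4>d/s$, i.e.\ $s>d/4$; passing to transposes, $\iota_2=\iota_1^*\in S_4\!\left(L^2(\Omega),H^{-s}_0(\Omega)\right)$ with $\|\iota_2\|_{S_4}=\|\iota_1\|_{S_4}$, since an operator and its adjoint share the same singular values. Part~(iii) then yields $\widetilde M_S\iota_1\in S_4\!\left(H^s(\Omega),L^2(\Omega)\right)$ with $\|\widetilde M_S\iota_1\|_{S_4}\le\|S\|_{L^\infty}\|\iota_1\|_{S_4}$, and part~(ii) with $p=q=4$, $r=2$ gives
\[
M_S=\iota_2(\widetilde M_S\iota_1)\in S_2\!\left(H^s(\Omega),H^{-s}_0(\Omega)\right)=\HS{H^s(\Omega)}{H^{-s}_0(\Omega)},
\qquad
\|M_S\|_{\operatorname{HS}}\le \sqrt2\,\|\iota_1\|_{S_4}^2\,\|S\|_{L^\infty}.
\]
By linearity of $S\mapsto M_S$ this proves boundedness, hence continuity, of $\mathcal M$ with constant $C=\sqrt2\,\|\iota_1\|_{S_4}^2$; the parenthetical case $d\in\{2,3\}$, $s=1$ is covered since then $1>d/4$ and $1/2\notin\mathbb N_0$.

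The argument is essentially bookkeeping once Proposition~\ref{prop: p-Schatten-embeddings} is in hand; the only genuinely non-automatic choice is the \emph{symmetric} split $S_4\cdot S_4\hookrightarrow S_2$ when routing through $L^2$ — this is what makes the weak hypothesis $s>d/4$ sufficient, whereas a cruder factorization using $\iota_1\in S_2$ would require $s>d/2$. The second mild subtlety is to identify $\iota_2$ correctly with the transpose of $\iota_1$ so that its $S_4$-norm is under control, which is exactly where the Lipschitz and half-integer hypotheses enter through the Sobolev duality quoted in the preliminaries.
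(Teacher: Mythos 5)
Your proof is correct and follows essentially the same route as the paper: factor $M_S$ through $L^2(\Omega)$ as $j^*\circ \tilde M_S\circ j$, use part~(i) of Proposition~\ref{prop: p-Schatten-embeddings} to place the embedding (and its adjoint) in $S_4$ since $s>d/4$, and combine parts~(ii) and (iii) to conclude $M_S\in S_2$, with continuity of $\mathcal M$ from linearity and the resulting bound in $\|S\|_{L^\infty}$. The only difference is cosmetic: the paper keeps a general exponent $p>d/s$ and checks $1/r=2/p<2s/d$ admits $r=2$, whereas you fix $p=q=4$ outright.
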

\begin{proof}
The condition $s-1/2\notin \mathbb{N}_0$ ensures that $H^s(\Omega)^{\prime} = H^{-s}_0(\Omega)$ (see, e.g., \cite[Chap.~4]{triebel:78}). 
    Let $\tilde{M_S}: L^2(\Omega) \to  L^2(\Omega), \tilde{M_S} \psi:=S \psi$. Then, we consider $M_S$ in the function spaces:
    \begin{align*}
H^s(\Omega) \stackrel{j}{\hookrightarrow} L^2(\Omega) \xrightarrow{\tilde{M_S}} L^2(\Omega) 
\stackrel{j^*}{\hookrightarrow} H_0^{-s}(\Omega).
\end{align*}
By Proposition~\ref{prop: p-Schatten-embeddings}, part~\ref{it:Schatten_embedding}, the embedding $j$ is an element of the Schatten class $S_p\left(H^s(\Omega), L^2(\Omega)\right)$ if $p>d/s$. Consequently, $j^*\in S_p\left(L^2(\Omega), H_0^{-s}(\Omega)\right)$. It follows from Proposition~\ref{prop: p-Schatten-embeddings}, parts~\ref{it:Schatten_composition} and \ref{it:Schatten_composition_2} 
that $M_S\in S_r(H^s(\Omega),H^{-s}_0(\Omega))$ if $\frac{1}{r}=\frac{1}{p}+\frac{1}{p}<\frac{2s}{d}$. As $\frac{2s}{d}>1/2$, $r=2$ is admissible. 
The continuity of $\mathcal{M}$ follows from the continuity of the mapping: $S \rightarrow \tilde{M_S}$.
\end{proof}

\begin{lemma}\label{lem:M_adjoint}
Under the assumptions of Lemma~\ref{lem:MS}, the adjoint operator 
$\mathcal{M}^*:\HS{H^{-s}_0(\Omega)}{H^s(\Omega)} \to L^{\infty}(\Omega)'$ takes 
values in the pre-dual $L^1(\Omega)\subset L^{\infty}(\Omega)^{\prime}$ 
of $L^{\infty}(\Omega)$ and 
\begin{align}\label{eq:Mstar_diag}
\mathcal{M}^* = \Diag.
\end{align}
\end{lemma}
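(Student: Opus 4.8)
The plan is to verify directly that the bilinear pairing $\langle M_S, \mathcal{B}\rangle_{\mathrm{HS}}$ coincides with $\int_\Omega S\,\overline{\Diag(\mathcal{B})}$ (or the appropriate real pairing, depending on the convention) for all $S\in L^\infty(\Omega)$ and all $\mathcal{B}\in\HS{H^{-s}_0(\Omega)}{H^s(\Omega)}$, so that $\mathcal{M}^*\mathcal{B} = \Diag(\mathcal{B})$ as an element of $L^1(\Omega)$, and in particular that $\mathcal{M}^*$ maps into the predual. The definition of the adjoint gives $\langle \mathcal{M}^*\mathcal{B}, S\rangle_{L^\infty{}',L^\infty} = \langle \mathcal{B}, \mathcal{M}S\rangle_{\mathrm{HS}} = \langle \mathcal{B}, M_S\rangle_{\mathrm{HS}} = \tr(M_S^*\,\mathcal{B})$, using the Gelfand-triple duality $\HS{\mathbb{V}}{\mathbb{V}'}' = \HS{\mathbb{V}'}{\mathbb{V}}$ with $\mathbb{V}=H^s(\Omega)$, $\mathbb{V}'=H^{-s}_0(\Omega)$, $\mathbb{H}=L^2(\Omega)$ recalled just before Proposition~\ref{prop: p-Schatten-embeddings}. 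So the whole statement reduces to the identity $\tr(M_S^*\,\mathcal{B}) = \int_\Omega S\,\Diag(\mathcal{B})\,\mathrm{d}\bi{x}$, together with the fact that the right-hand side is a bounded linear functional of $S\in L^\infty$, i.e. $\Diag(\mathcal{B})\in L^1(\Omega)$.

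First I would check that the composition $M_S^*\,\mathcal{B}$ is trace class on $L^2(\Omega)$, so that $\tr(M_S^*\,\mathcal{B})$ makes sense and the trace formula \eqref{eq:trace_formula} applies. Here $\mathcal{B}\in\HS{H^{-s}_0(\Omega)}{H^s(\Omega)} = S_2(H^{-s}_0(\Omega),H^s(\Omega))$, and composing with the Sobolev embeddings $H^s(\Omega)\hookrightarrow L^2(\Omega)\hookrightarrow H^{-s}_0(\Omega)$ — each of which lies in $S_p$ for $p>d/s$ by Proposition~\ref{prop: p-Schatten-embeddings}\ref{it:Schatten_embedding}, and in particular in $S_4$ since $2s/d>1/2$ — shows via parts \ref{it:Schatten_composition} and \ref{it:Schatten_composition_2} that the operator $j^*j\,\mathcal{B}$, viewed as an operator on $L^2(\Omega)$, lies in $S_r$ with $1/r = 1/2 + 1/4 + 1/4 = 1$, hence is trace class; composing further with the bounded operator $\widetilde{M_S}$ on $L^2(\Omega)$ keeps it trace class. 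This simultaneously gives $\Diag(\mathcal{B}) := \Diag(j^*j\,\mathcal{B})\in L^1(\Omega)$ via Proposition~\ref{prop:diagonal}, with $\|\Diag(\mathcal{B})\|_{L^1}\lesssim \|\mathcal{B}\|_{\mathrm{HS}}$, establishing the claimed mapping property of $\mathcal{M}^*$.

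Next I would establish the kernel-level identity. By the density argument in the proof of Proposition~\ref{prop:diagonal} (decompose $\mathcal{B}$ into positive semidefinite pieces and approximate each by operators with continuous kernels in $S_1$), it suffices to prove $\tr(M_S^*\,\mathcal{B}) = \int_\Omega S(\bi{x})\,B(\bi{x},\bi{x})\,\mathrm{d}\bi{x}$ when $\mathcal{B}$ has a continuous kernel $B$; the general case follows by passing to the limit, using continuity of $\mathcal{B}\mapsto\tr(M_S^*\mathcal{B})$ in $S_1$ and continuity of $\Diag$ from Proposition~\ref{prop:diagonal}. For continuous kernels, the Schwartz kernel of $M_S^*\,\mathcal{B}$ (as an operator on $L^2$) is $(\bi{x},\bi{y})\mapsto \overline{S(\bi{x})}\,B(\bi{x},\bi{y})$, so by the trace formula \eqref{eq:trace_formula} one gets $\tr(M_S^*\,\mathcal{B}) = \int_\Omega \overline{S(\bi{x})}\,B(\bi{x},\bi{x})\,\mathrm{d}\bi{x} = \int_\Omega \overline{S}\,\Diag(\mathcal{B})\,\mathrm{d}\bi{x}$, which is exactly the pairing $\langle \Diag(\mathcal{B}), S\rangle$ and identifies $\mathcal{M}^*\mathcal{B}$ with $\Diag(\mathcal{B})$ (the complex conjugate landing on $S$ matches the convention in which $\mathcal{M}$ is complex-linear and the $L^1$–$L^\infty$ pairing is $\langle f,S\rangle = \int f\overline{S}$; if instead a bilinear pairing is used the conjugates simply disappear).

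The main obstacle is the bookkeeping needed to make "the kernel of $M_S^*\mathcal{B}$ restricted to the diagonal equals $\overline{S}\cdot(\text{diagonal of }\mathcal{B})$" rigorous: $\mathcal{B}$ is a priori an operator between Sobolev spaces, not on $L^2$, and $\Diag$ has only been defined on $S_1(L^2(\Omega))$, so one must be careful to insert the embeddings $j, j^*$ consistently and check that "$\Diag(\mathcal{B})$" is unambiguous — i.e. that $\Diag(j^*\widetilde{M_S}\,j\,\mathcal{B}_{L^2})$ does not depend on how one factors through $L^2$ — and that multiplying a continuous kernel by the $L^\infty$ function $S$ (which need not be continuous) still produces something whose diagonal is handled correctly by the density/limiting argument rather than by naive pointwise restriction. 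Once the approximation scheme of Proposition~\ref{prop:diagonal} is invoked at the level of $\mathcal{B}$ (not of $M_S^*\mathcal{B}$), this is routine, but it is the step where care is required.
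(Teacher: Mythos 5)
Your overall route is the same as the paper's: unwind the Gelfand-triple Hilbert--Schmidt pairing into a trace on $L^2(\Omega)$, show that $\tilde{\mathcal{B}}:=j\mathcal{B}j^*$ is trace class via the $S_4$ Sobolev embeddings composed with the $S_2$ factor (your exponent count $1/2+1/4+1/4=1$ is exactly the paper's; note the operator acting on $L^2(\Omega)$ is $j\mathcal{B}j^*$, not $j^*j\,\mathcal{B}$), and reduce by the density argument from the proof of Proposition~\ref{prop:diagonal} to operators $\mathcal{B}$ with continuous kernel. This also yields the mapping property $\mathcal{M}^*\mathcal{B}=\Diag(\tilde{\mathcal{B}})\in L^1(\Omega)$ with the bound $\|\Diag(\tilde{\mathcal{B}})\|_{L^1}\lesssim\|\mathcal{B}\|_{\mathrm{HS}}$, as in the paper.

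The one step that does not go through as written is the decisive identity for continuous-kernel $\mathcal{B}$: you apply the trace formula \eqref{eq:trace_formula} to $M_S^*\mathcal{B}$ and read off its diagonal as $\overline{S(\bi{x})}\,B(\bi{x},\bi{x})$. Proposition~\ref{prop:diagonal} identifies $\Diag$ pointwise only for operators with \emph{continuous} kernel, and $(\bi{x},\bi{y})\mapsto\overline{S(\bi{x})}\,B(\bi{x},\bi{y})$ is not continuous for general $S\in L^{\infty}(\Omega)$, so this appeal is not licensed as stated; you flag the difficulty yourself, but your proposed remedy (running the approximation at the level of $\mathcal{B}$ rather than $M_S^*\mathcal{B}$) does not repair it, because after that reduction you are exactly in the situation where $\mathcal{B}$ has a continuous kernel and the discontinuity of the product kernel comes from $S$, which cannot be removed by $L^\infty$-norm approximation with continuous functions. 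The gap is easy to close, and the paper's proof shows one way: never take the diagonal of the product operator, but compute the trace directly with an orthonormal basis $\{e_k\}$ of $L^2(\Omega)$, namely $\mathrm{tr}(\tilde{M}_S^*\tilde{\mathcal{B}})=\sum_k\langle\tilde{\mathcal{B}}e_k,Se_k\rangle$, insert the kernel $B$, and use completeness of $\{e_k\}$ together with $B(\bi{y},\cdot)\in C(\overline{\Omega})\subset L^2(\Omega)$ to collapse the sum to $\int_{\Omega}\overline{S(\bi{y})}\,B(\bi{y},\bi{y})\,\mathrm{d}\bi{y}$ (up to the conjugation convention); alternatively, one can invoke the local-averaging characterization of $\Diag$ from \cite{brislawn:88} at Lebesgue points of $S$, using continuity of $B$. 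With either replacement, the remainder of your argument coincides with the paper's proof.
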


\begin{proof}
Let $f \in L^{\infty}(\Omega)$, $j: H^s(\Omega) \hookrightarrow L^2(\Omega)$, and 
$\mathcal{A}\in \HS{H^{-s}_0(\Omega)}{H^s(\Omega)}$. 
It follows from Proposition~\ref{prop: p-Schatten-embeddings} that $\tilde{\mathcal{A}}:= j\mathcal{A}j^*\in S_1\left(L^2(\Omega)\right)$. We identify $\tilde{\mathcal{A}}$ and $\mathcal{A}$, i.e. a more precise formulation of \eqref{eq:Mstar_diag} is $\mathcal{M}^*(\mathcal{A}) = \Diag(\tilde{\mathcal{A}})$ for all $\mathcal{A}$. By the density result established at the beginning 
of the proof of Proposition~\ref{prop:diagonal}, it 
suffices to establish the relation for operators $\mathcal{A}$ with continuous kernel $A$. Choosing an orthonormal basis $\{e_k:k\in \mathbb{N}\}$ 
of $L^2(\Omega)$, we obtain
\begin{align*}
    \langle M_f,\mathcal{A}\rangle
   & = \tr\left(\mathcal{A}^*M_f\right)
    = \tr\left(j\mathcal{A}^*M_f j^{-1}\right)
    = \tr\left(\tilde{\mathcal{A}}^* \tilde{M}_f\right)
    = \sum_{k=1}^{\infty} \left\langle \tilde{\mathcal{A}}^*(fe_k),e_k\right\rangle\\
    & = \sum_{k=1}^{\infty} \int_{\Omega} \int_{\Omega}
    \overline{A(\bi{y},\bi{x})}f(\bi{y})e_k(\bi{y})\,\mathrm{d}\bi{y}\, \overline{e_k(\bi{x})}\,\mathrm{d}\bi{x} \\
    &= \int_{\Omega} \sum_{k=1}^{\infty}e_k(\bi{y}) \int_{\Omega}
    \overline{A(\bi{y},\bi{x})e_k(\bi{x})}\,\mathrm{d}\bi{x} 
    f(\bi{y})\mathrm{d}\bi{y}.
\end{align*}
Since $\Omega$ is bounded and hence 
$\overline{A(\bi{y},\cdot)}\in C(\overline{\Omega})
\subset L^2(\Omega)$, the completeness of $\{e_k\}$ implies that 
$\sum_{k=1}^{\infty}e_k(\bi{y}) \int_{\Omega}
    \overline{A(\bi{y},\bi{x})e_k(\bi{x})}\,\mathrm{d}\bi{x} =\overline{A(\bi{y},\bi{y})}$. This shows that 
 $\langle \mathcal{M}(f),\mathcal{A}\rangle = 
 \langle f,\Diag(\tilde{\mathcal{A}})\rangle$, completing the proof. 
\end{proof}

In Lemma \ref{lem:Frechet_G} we consider $M_{\partial v}$ and $M_{\partial \bi{A}}$ in the following function spaces:
\begin{align*}
    &M_{\partial v}: H^1(\Omega_0) \rightarrow H_0^{-1}(\Omega_0), \\
    &M_{\partial \bi{A}}: L^{2}(\Omega)^d \rightarrow H_0^{-1}(\Omega).
\end{align*}
The multiplication operator $M_{\partial v}$ has been discussed in Lemma~\ref{lem:MS} and Lemma~\ref{lem:M_adjoint}.
Although for $M_{\partial \bi{A}}$ we have less regularity, the following analogs still hold true:
\begin{lemma}
\label{lem:M_A}
    Let $d \in \{2, 3\}$ and $\partial \bi{A} \in W^{\infty}(\diver, \Omega_0)$. Then,
    \begin{enumerate}
        \item $M_{\partial \bi{A}} \in S_4\left(L^{2}(\Omega_0)^d, H_0^{-1}(\Omega_0)\right)$ and the following map is continuous:
    \begin{align*}
    &\tilde{\mathcal{M}}: L^{\infty}(\Omega_0, \Rd) \to S_4\left(L^{2}(\Omega_0)^d, H_0^{-1}(\Omega_0)\right)\\
    &\partial \bi{A} \mapsto M_{\partial \bi{A}}.
    \end{align*}
    \item 
    $\tilde{\mathcal{M}}^*:S_{4/3} (L^{2}(\Omega_0)^d, H^1(\Omega_0))= S_4\left(L^{2}(\Omega_0)^d, H_0^{-1}(\Omega_0)\right)^{\prime}\to L^{\infty}(\Omega_0)^{\prime} $ takes values in the pre-dual $L^1(\Omega_0) \subset L^{\infty}(\Omega_0)^{\prime}$. For an operator $\mathcal{B}
    = (\mathcal{B}_1,\dots,\mathcal{B}_d)\in S_{4/3} (L^{2}(\Omega_0)^d, H_1(\Omega_0))$ with continuous kernel $\bi{B}
    =(B_1,\dots, B_d):\Omega\times\Omega\to \C^d$ we have 
\[
\tilde{\mathcal{M}}^*\mathcal{B}=\biDiag\bi{B}, \qquad 
\biDiag\bi{B}:=(\Diag B_1,\dots,\Diag B_d).
\]
    \end{enumerate}
\end{lemma}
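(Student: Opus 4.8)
The plan is to follow the same strategy as in the proofs of Lemma~\ref{lem:MS} and Lemma~\ref{lem:M_adjoint}, but tracking the reduced regularity carefully. For part (i), I would factor $M_{\partial\bi A}$ through $L^2(\Omega_0)$ as
\[
L^2(\Omega_0)^d \stackrel{\Id}{\longrightarrow} L^2(\Omega_0)^d
\xrightarrow{\ \tilde M_{\partial\bi A}\ } L^2(\Omega_0)
\stackrel{j^*}{\hookrightarrow} H^{-1}_0(\Omega_0),
\]
where $\tilde M_{\partial\bi A}\psi := \partial\bi A\cdot\psi$ is bounded $L^2(\Omega_0)^d\to L^2(\Omega_0)$ with norm $\le\|\partial\bi A\|_{L^\infty}$, and $j:H^1(\Omega_0)\hookrightarrow L^2(\Omega_0)$. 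By Proposition~\ref{prop: p-Schatten-embeddings}\,\ref{it:Schatten_embedding}, $j\in S_p(H^1(\Omega_0),L^2(\Omega_0))$ for every $p>d$, hence $j^*\in S_p(L^2(\Omega_0),H^{-1}_0(\Omega_0))$ for $p>d$; since $d\in\{2,3\}$, $p=4$ is admissible. Then Proposition~\ref{prop: p-Schatten-embeddings}\,\ref{it:Schatten_composition_2} gives $M_{\partial\bi A}=j^*\circ\tilde M_{\partial\bi A}\in S_4(L^2(\Omega_0)^d,H^{-1}_0(\Omega_0))$ with $\|M_{\partial\bi A}\|_{S_4}\le\|j^*\|_{S_4}\,\|\partial\bi A\|_{L^\infty}$. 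Continuity of $\tilde{\mathcal M}$ is then linearity plus this bound (note $\partial\bi A$ enters linearly, so it is in fact bounded linear, not merely continuous).

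For part (ii), the duality $S_{4/3}(L^2(\Omega_0)^d,H^1(\Omega_0))=S_4(L^2(\Omega_0)^d,H^{-1}_0(\Omega_0))'$ is exactly Proposition~\ref{prop: p-Schatten-embeddings}\,(iv) with $p=4$, $q=4/3$, using $H^1(\Omega_0)'=H^{-1}_0(\Omega_0)$ (the condition $s-1/2\notin\mathbb N_0$ with $s=1$, $d\le 3$ is met as in Lemma~\ref{lem:MS}). To identify $\tilde{\mathcal M}^*$, take $\mathcal B=(\mathcal B_1,\dots,\mathcal B_d)\in S_{4/3}(L^2(\Omega_0)^d,H^1(\Omega_0))$ and $f\in L^\infty(\Omega_0,\R^d)$, and compute $\langle \tilde{\mathcal M}f,\mathcal B\rangle=\tr(\mathcal B^*M_f)$. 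Writing this componentwise, $M_f:L^2(\Omega_0)^d\to L^2(\Omega_0)$ acts by $\psi\mapsto\sum_\ell f_\ell\psi_\ell$, so $\mathcal B^*M_f$ decomposes into a sum over $\ell$ of the scalar operators $\mathcal B_\ell^* M_{f_\ell}$ composed with coordinate projections/injections; the trace is then $\sum_\ell \tr(\mathcal B_\ell^* M_{f_\ell})$. For each $\ell$ this is precisely the pairing computed in Lemma~\ref{lem:M_adjoint}, giving $\tr(\mathcal B_\ell^*M_{f_\ell})=\int_{\Omega_0}f_\ell\,\Diag(\widetilde{\mathcal B_\ell})\,\mathrm d\bi x$ (with the identification $\widetilde{\mathcal B_\ell}=j\mathcal B_\ell j^*\in S_1(L^2(\Omega_0))$, legitimate by parts~\ref{it:Schatten_composition}–\ref{it:Schatten_composition_2} of Proposition~\ref{prop: p-Schatten-embeddings}). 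Summing over $\ell$ yields $\langle\tilde{\mathcal M}f,\mathcal B\rangle=\int_{\Omega_0}f\cdot\biDiag\bi B\,\mathrm d\bi x$, which both shows $\tilde{\mathcal M}^*\mathcal B=\biDiag\bi B\in L^1(\Omega_0)$ for continuous-kernel $\mathcal B$ and, by the density argument of Proposition~\ref{prop:diagonal} applied componentwise, extends $\biDiag$ to a bounded operator on all of $S_{4/3}(L^2(\Omega_0)^d,H^1(\Omega_0))$ agreeing with $\tilde{\mathcal M}^*$.

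The one place that needs genuine care — the main obstacle — is the bookkeeping of the vector-valued structure: making sure that the Schatten-class membership and the trace computation are compatible with the direct-sum decomposition $L^2(\Omega_0)^d=\bigoplus_{\ell=1}^d L^2(\Omega_0)$, and that $j\mathcal B j^*$ (with $j$ now meaning the diagonal embedding $H^1(\Omega_0)\hookrightarrow L^2(\Omega_0)$ applied in the target) indeed lands in $S_1$ componentwise so that each $\Diag(\widetilde{\mathcal B_\ell})$ is well defined. The scalar case uses only $\frac1p+\frac1p<\frac{2s}{d}$ with $s=1$; here the relevant Hölder exponent for the product $j^*\cdot\tilde M_{\partial\bi A}\cdot(\text{dual factor})$ is $\frac14+\frac14<\frac12<\frac{2\cdot1}{d}$ for $d\in\{2,3\}$, so again the composition is trace class and Mercer's theorem applies to each component. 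Everything else is a routine transcription of Lemmas~\ref{lem:MS} and~\ref{lem:M_adjoint}.
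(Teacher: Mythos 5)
Your proposal is correct and follows essentially the same route as the paper: part (i) uses the identical factorization $M_{\partial \bi{A}}=j^*\circ\tilde M_{\partial\bi A}$ through $L^2(\Omega_0)$ with $j^*\in S_4$, and part (ii) composes with the embedding $j:H^1(\Omega_0)\hookrightarrow L^2(\Omega_0)$ to land in $S_1$ and then repeats the trace computation of Lemma~\ref{lem:M_adjoint} (the paper states this more tersely, without writing out the componentwise reduction). The only slip is notational: since the domain of $\mathcal B_\ell$ is already $L^2(\Omega_0)$, one should take $\tilde{\mathcal B}_\ell = j\circ\mathcal B_\ell$ rather than $j\mathcal B_\ell j^*$ (the latter does not even compose), which is exactly what the paper does.
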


\begin{proof}
In this proof, $j$ will denote the embedding $H^1(\Omega_0) \hookrightarrow L^2(\Omega_0)$ and recall from Proposition~\ref{prop: p-Schatten-embeddings}, part \ref{it:Schatten_embedding} that $j\in S_4\left(H^1(\Omega_0), L^2(\Omega_0)\right)$ and hence $j^*\in S_4\left(L^2(\Omega_0),H^{-1}_0(\Omega_0)\right)$.

    \emph{Part (i):} We consider $M_{\partial \bi{A}} = j^*\circ \tilde{M}_{\partial \bi{A}}$ in the function spaces
    \begin{align*}
L^{2}(\Omega_0)^d \xrightarrow{\tilde{M}_{\partial \bi{A}}} L^2(\Omega_0) \stackrel{j^*}{\hookrightarrow} H_0^{-1}(\Omega_0),
\end{align*}
where $\tilde{M}_{\partial \bi{A}} \bi{\psi}=\partial \bi{A} \cdot \bi{\psi}$ for $\bi{\psi} \in L^{2}(\Omega_0)^d$.
The claim follows by Proposition~\ref{prop: p-Schatten-embeddings}.

\emph{Part (ii):}
Let $\tilde{\mathcal{B}}:=j\circ \mathcal{B}: L^{2}(\Omega_0)^d \to L^2(\Omega_0)$. 
Part (ii) of this proposition yields $\tilde{\mathcal{B}} \in S_1\left(L^{2}(\Omega_0)^d, L^2(\Omega_0)\right)$. As in Lemma \ref{lem:M_adjoint}, the assertion now follows.
\end{proof}

\section{Fr\'echet derivative and adjoint of the forward operator}
\label{sec: frechet derivative}

A characterization of the adjoint of $S\mapsto\Tr_{\Gamma} \mathcal{G} M_S \mathcal{G}^*\Tr_{\Gamma}^*$ was given in \cite{Hohage2020}. There, a characterization of the adjoint of $S \mapsto M_S$ in a functional analytic framework was circumvented, resulting in a rather technical formulation of the result.

With the results of the previous section, the proof of the following central results is 
now mostly straightforward. 
\begin{theorem}
\label{the: HS-operator}
Assumptions~\ref{ass:well_posed} 
and \ref{ass:solvability_noise} hold true for some wave number $k\in\C$ and 
$d\in\{2,3\}$. 
Let $\mathbb{X}:=\mathbb{X}_{\mathcal{G}} \times 
L^{\infty}(\Omega_0,\R)$ with $\mathbb{X}_{\mathcal{G}}$ defined in 
\eqref{eq:defi_XG}
and let $\Bfrak:=\Bfrak_k\times L^{\infty}(\Omega_0;[0,\infty))$. 
Then the following holds true: 
\begin{enumerate}
\item The forward operator~\eqref{eq:extended_forward_op} is well-defined 
and continuous as a mapping 
\[
\mathcal{C}:\Bfrak \to \HSs{L^2(\Gamma)}, 
\]
and it is Fr\'echet differentiable on the interior of $\Bfrak$. The derivative
$\mathcal{C}^{\prime}[v,\bi{A},S]:\mathbb{X}\to \HSs{L^2(\Gamma)}$ is given 
by 
\begin{align*}
&\mathcal{C}^{\prime}[v,\bi{A},S](\partial v,\partial\bi{A},\partial S)\\
&\hspace{1cm}= 2\Re\left(\Tr_{\Gamma}\mathcal{G}_{v,\bi{A}}
\left(-M_{\partial v}+2iM_{\partial \bi{A}}\cdot\nabla \right)\mathcal{G}_{v,\bi{A}}\left(M_{S}+\Tr^*_{\partial \Omega} \bCov[s_{\partial\Omega}] \Tr_{\partial \Omega} \right)
\mathcal{G}_{v,\bi{A}}^*\Tr_{\Gamma}^*\right)\\
&\hspace{1cm}+ \Tr_{\Gamma} \mathcal{G}_{v,\bi{A}}M_{\partial S}\mathcal{G}_{v,\bi{A}}^*\Tr_{\Gamma}^*
\end{align*}
where $\Re(\mathcal{A}):=\frac{1}{2}(\mathcal{A}+\mathcal{A}^*)$. 
\item The adjoint $\mathcal{C}^{\prime}[v,\bi{A},S]^*:\HSs{L^2(\Gamma)}
\to \mathbb{X}^{\prime}$ of $\mathcal{C}^{\prime}[v,\bi{A},S]$ takes values in the pre-dual 
$L^{1}(\Omega_0;\C)\times L^{1}(\Omega_0;\Rd) \times 
L^{1}(\Omega_0;\R)\subset \mathbb{X}^{\prime}$ of $\mathbb{X}$ and is given by 
\begin{align*}
&\mathcal{C}^{\prime}[v,\bi{A},S]^*\mathcal{D}
= \left(\begin{array}{c}
-2\Diag\left(\mathcal{E} (M_S+\Tr_{\partial \Omega}^* \bCov[s_{\partial\Omega}] \Tr_{\partial \Omega}) \mathcal{G}_{v,\bi{A}}^* \right)\\
-4i\biDiag\left(\mathcal{E} (M_S+\Tr_{\partial \Omega}^* \bCov[s_{\partial\Omega}] \Tr_{\partial \Omega}) (\nabla\mathcal{G}_{v,\bi{A}})^*\right)\\
\Diag\mathcal{E}\end{array}\right),\\
&\mathcal{E}:=\mathcal{G}_{v,\bi{A}}^*\Tr_{\Gamma}^*\Re(\mathcal{D})\Tr_{\Gamma}\mathcal{G}_{v,\bi{A}}.
\end{align*}
\end{enumerate}
\end{theorem}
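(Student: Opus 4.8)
The plan is to assemble Theorem~\ref{the: HS-operator} from the chain-rule for Fr\'echet derivatives together with the Hilbert-Schmidt and Schatten-class bookkeeping developed in Sections~\ref{sec: model problem}--\ref{sec: frechet derivative}. First I would treat part (i). The forward operator \eqref{eq:extended_forward_op} is the composition $(v,\bi{A},S)\mapsto (\mathcal{G}_{v,\bi{A}},M_S+\Tr^*_{\partial\Omega}\bCov[s_{\partial\Omega}]\Tr_{\partial\Omega})\mapsto \Tr_{\Gamma}\mathcal{G}_{v,\bi{A}}(\cdots)\mathcal{G}_{v,\bi{A}}^*\Tr_{\Gamma}^*$. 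Lemma~\ref{lem:Frechet_G} gives differentiability of $(v,\bi{A})\mapsto\mathcal{G}_{v,\bi{A}}$ with values in $L(H^{-1}_0(\Omega),H^1(\Omega))$, and $S\mapsto M_S$ is affine linear, hence trivially differentiable; the only nontrivial point is that the trilinear ``sandwich'' map is bounded into $\HSs{L^2(\Gamma)}$. For that I would factor each $\Tr_\Gamma\mathcal{G}_{v,\bi{A}}$ through the Sobolev scale as in Lemma~\ref{lem:MS}: $\Tr_\Gamma\mathcal{G}_{v,\bi{A}}:H^{-s}_0(\Omega_0)\to H^{1/2}(\Gamma)\hookrightarrow L^2(\Gamma)$, and use the Schatten embedding of $H^s(\Omega_0)\hookrightarrow L^2(\Omega_0)$ (Proposition~\ref{prop: p-Schatten-embeddings}(i)) to pick up the missing compactness, so that $\Tr_\Gamma\mathcal{G}_{v,\bi{A}}M_S\mathcal{G}_{v,\bi{A}}^*\Tr_\Gamma^*$ lies in $S_2=\HSs{L^2(\Gamma)}$ by parts (ii)--(iii) of that proposition; the $M_{\partial\bi{A}}\cdot\nabla$ terms are handled analogously using Lemma~\ref{lem:M_A}, noting $\nabla\mathcal{G}_{v,\bi{A}}:H^{-1}_0(\Omega)\to L^2(\Omega)^d$ is bounded. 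Once boundedness of these multilinear maps is established, continuity and the product-rule formula for $\mathcal{C}'$ are automatic, and the $2\Re(\cdot)$ appears because $\mathcal{C}[v,\bi{A},S]$ is a $\bCov$, i.e. of the form $\mathcal{F}\mathcal{F}^*$ with $\mathcal{F}=\Tr_\Gamma\mathcal{G}_{v,\bi{A}}(\cdots)^{1/2}$, so perturbing one factor produces a term plus its adjoint; expanding $\partial(\mathcal{G}_{v,\bi{A}})$ via Lemma~\ref{lem:Frechet_G} and collecting gives the stated expression.

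For part (ii) I would compute the adjoint term by term. Write $\mathcal{C}'[v,\bi{A},S](\partial v,\partial\bi{A},\partial S)=T_1+T_2+T_3$ according to the three components. For the $\partial S$ term, $\langle T_3,\mathcal{D}\rangle_{\mathrm{HS}}=\langle \Tr_\Gamma\mathcal{G}_{v,\bi{A}}M_{\partial S}\mathcal{G}_{v,\bi{A}}^*\Tr_\Gamma^*,\mathcal{D}\rangle_{\mathrm{HS}}=\tr(\mathcal{D}^*\Tr_\Gamma\mathcal{G}_{v,\bi{A}}M_{\partial S}\mathcal{G}_{v,\bi{A}}^*\Tr_\Gamma^*)$; by the trace's cyclicity and self-adjointness of $\mathcal{D}$ under $\Re$, this equals $\langle M_{\partial S},\mathcal{E}\rangle$ with $\mathcal{E}=\mathcal{G}_{v,\bi{A}}^*\Tr_\Gamma^*\Re(\mathcal{D})\Tr_\Gamma\mathcal{G}_{v,\bi{A}}\in\HS{H^{-s}_0(\Omega_0)}{H^s(\Omega_0)}$ (again by the Schatten-composition bookkeeping), and then Lemma~\ref{lem:M_adjoint} identifies $\mathcal{M}^*=\Diag$, giving the third component $\Diag\mathcal{E}$. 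For the $\partial v$ term I would similarly move $\Re(\mathcal{D})$ next to $\Tr_\Gamma\mathcal{G}_{v,\bi{A}}$ on both sides, use cyclicity of the trace to land $M_{\partial v}$ between $\mathcal{G}_{v,\bi{A}}^*\Tr_\Gamma^*\Re(\mathcal{D})\Tr_\Gamma\mathcal{G}_{v,\bi{A}}=\mathcal{E}$ on one side and $(M_S+\Tr^*_{\partial\Omega}\bCov[s_{\partial\Omega}]\Tr_{\partial\Omega})\mathcal{G}_{v,\bi{A}}^*$ on the other, absorb the factor $-2$ from $2\Re$, and apply Lemma~\ref{lem:M_adjoint} once more to get $-2\Diag(\mathcal{E}(M_S+\cdots)\mathcal{G}_{v,\bi{A}}^*)$. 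The $\partial\bi{A}$ term is the same manoeuvre with $2iM_{\partial\bi{A}}\cdot\nabla$ in place of $-M_{\partial v}$; taking the adjoint of $2iM_{\partial\bi{A}}\cdot\nabla$ produces $-2i(\nabla\mathcal{G}_{v,\bi{A}})^*$ acting on the right factor, the $2\Re$ contributes another $2$, and Lemma~\ref{lem:M_A}(ii) supplies $\tilde{\mathcal{M}}^*=\biDiag$, yielding $-4i\biDiag(\mathcal{E}(M_S+\cdots)(\nabla\mathcal{G}_{v,\bi{A}})^*)$. Throughout one must check that each intermediate operator actually lies in the Schatten class where the relevant $\Diag$ or $\biDiag$ is defined (namely $S_1(L^2(\Omega_0))$ after the Sobolev identifications, resp. $S_{4/3}$ for the $\bi{A}$-component), which is exactly what Proposition~\ref{prop: p-Schatten-embeddings} and Lemmas~\ref{lem:MS}--\ref{lem:M_A} are set up to give.

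The main obstacle I anticipate is purely the regularity/Schatten-exponent bookkeeping rather than any conceptual difficulty: one has to choose the Sobolev index $s$ (e.g. $s=1$ for $d\in\{2,3\}$ as in Lemma~\ref{lem:MS}) consistently across all factors so that every composition appearing in $\mathcal{C}'$ and in $\mathcal{C}'^*$ falls into $S_2$ or $S_1$ as needed, and so that the gradient factor $\nabla\mathcal{G}_{v,\bi{A}}$ — which costs one derivative and forces the weaker $S_4$/$S_{4/3}$ pairing of Lemma~\ref{lem:M_A} — still closes up. A secondary subtlety is justifying the identification of $\mathcal{A}\in\HS{H^{-s}_0(\Omega_0)}{H^s(\Omega_0)}$ with $j\mathcal{A}j^*\in S_1(L^2(\Omega_0))$ before applying $\Diag$, and checking that the dual pairings used (the $\HS$ pairing on $L^2(\Gamma)$, the $S_p$--$S_q$ pairing of Proposition~\ref{prop: p-Schatten-embeddings}(iv), and the $L^1$--$L^\infty$ pairing on $\Omega_0$) are mutually compatible under the trace's cyclicity; but all of this is supplied by the preceding section, so the proof is, as claimed, mostly a matter of assembling the pieces.
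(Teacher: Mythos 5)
Your overall route coincides with the paper's: part (i) via Lemma~\ref{lem:Frechet_G}, the Schatten-class bookkeeping of Proposition~\ref{prop: p-Schatten-embeddings} and Lemmas~\ref{lem:MS}--\ref{lem:M_A}, and part (ii) via the adjoint of $\mathcal{T}\mapsto\mathcal{B}\mathcal{T}\mathcal{A}$, cyclicity of the trace, and the identifications $\mathcal{M}^*=\Diag$, $\tilde{\mathcal{M}}^*=\biDiag$; this is exactly how the paper argues.

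There is, however, one concrete gap in your part (i). You declare that the only nontrivial point is Hilbert--Schmidtness of the sandwich map and then verify it only for the interior contribution $\Tr_{\Gamma}\mathcal{G}_{v,\bi{A}}M_S\mathcal{G}_{v,\bi{A}}^*\Tr_{\Gamma}^*$, where $M_S\in \HS{H^1}{H^{-1}_0}$ does all the work. The boundary-source contribution $\Tr_{\Gamma}\mathcal{G}_{v,\bi{A}}\Tr_{\partial\Omega}^*\bCov[s_{\partial\Omega}]\Tr_{\partial\Omega}\mathcal{G}_{v,\bi{A}}^*\Tr_{\Gamma}^*$ is not covered by this argument: the middle operator $\Tr_{\partial\Omega}^*\bCov[s_{\partial\Omega}]\Tr_{\partial\Omega}:H^1(\Omega)\to H^{-1}_0(\Omega)$ is \emph{not} Hilbert--Schmidt, since the embeddings $H^{1/2}(\partial\Omega)\hookrightarrow L^2(\partial\Omega)$ and its adjoint on the $(d-1)$-dimensional boundary lie only in $S_p$ for $p>2(d-1)$ (so the product is merely in $S_p$ for $p>2$ when $d=3$). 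To close this, one must additionally harvest the smoothing of the \emph{outer} factors, i.e.\ the embeddings $L^2(\Gamma)\hookrightarrow H^{-1/2}(\Gamma)$ and $H^{1/2}(\Gamma)\hookrightarrow L^2(\Gamma)$: the paper writes this term as $K\,\bCov[s_{\partial\Omega}]\,K^*$ with $K=\Tr_{\Gamma}\mathcal{G}_{v,\bi{A}}\Tr_{\partial\Omega}^*\in L\left(H^{-1/2}(\partial\Omega),H^{1/2}(\Gamma)\right)$ and composes four $S_8$ embeddings to land in $S_2$. You gesture at the factorization through $H^{1/2}(\Gamma)\hookrightarrow L^2(\Gamma)$, but you use it only as a remark about the $M_S$ term, so the $\bCov[s_{\partial\Omega}]$ term remains unproved as written; the same accounting is what later justifies $\mathcal{E}(M_S+\Tr_{\partial\Omega}^*\bCov[s_{\partial\Omega}]\Tr_{\partial\Omega})\in S_{4/3}$ in part (ii). A minor secondary point: your square-root heuristic for the factor $2\Re$ needs care, since $M_S+\Tr_{\partial\Omega}^*\bCov[s_{\partial\Omega}]\Tr_{\partial\Omega}$ maps $H^1(\Omega)\to H^{-1}_0(\Omega)$ and is not an endomorphism, so its square root is not literally defined there; the clean argument is simply the product rule together with self-adjointness of this middle operator, which is what the paper (and ultimately you, when ``expanding $\partial\mathcal{G}_{v,\bi{A}}$ and collecting'') relies on.
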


\begin{proof}
\emph{Part (i):} 
Let $\mathcal{C}_1[v,\bi{A},S]:= \Tr_{\Gamma} \mathcal{G}_{v,\bi{A}} M_{S} \mathcal{G}_{v,\bi{A}}^*\Tr_{\Gamma}^*.$ and $\mathcal{C}_2[v,\bi{A},S]:= \Tr_{\Gamma} \mathcal{G}_{v,\bi{A}} \Tr^*_{\partial \Omega} \bCov[s_{\partial\Omega}] \Tr_{\partial \Omega} \mathcal{G}_{v,\bi{A}}^*\Tr_{\Gamma}^*.$
We consider the factors defining $\mathcal{C}_1[v,\bi{A},S]$ in the 
following function spaces:
\begin{align*}
L^2(\Gamma)\hookrightarrow H^{-1/2}(\Gamma)
\xrightarrow{\Tr_{\Gamma}^*} H_0^{-1}(\Omega) \xrightarrow{\mathcal{G}^*}  H^1(\Omega) \xrightarrow{M_S} H_0^{-1}(\Omega) \xrightarrow{\mathcal{G}} H^1(\Omega) \xrightarrow{\Tr_{\Gamma}}H^{1/2}(\Gamma)\hookrightarrow L^2(\Gamma).
\end{align*}
Here $M_S: H^1(\Omega) \rightarrow H_0^{-1}(\Omega)$ is Hilbert-Schmidt by 
Lemma~\ref{lem:MS}, and all other operators are bounded. By part (iii) of Proposition \ref{prop: p-Schatten-embeddings}, it follows that $\mathcal{C}_1[v,\bi{A},S]$ is Hilbert-Schmidt. 

Similarly, we consider the factors defining $\mathcal{C}_2[v,\bi{A},S]$ in the following function spaces:
\begin{align*}
&\fl L^2(\Gamma)\hookrightarrow H^{-1/2}(\Gamma)
\xrightarrow{K^*} H^{1/2}(\partial \Omega) \hookrightarrow L^2(\partial \Omega) \xrightarrow{\bCov[s_{\partial\Omega}]} L^2(\partial \Omega)\\
&\hookrightarrow H^{-1/2}(\partial \Omega)  \xrightarrow{K} H^{1/2}(\Gamma) \hookrightarrow L^2(\Gamma).
\end{align*}
By part (i) of Proposition~\ref{prop: p-Schatten-embeddings}, every embedding is an element of $S_8$. By part \ref{it:Schatten_composition}, \ref{it:Schatten_composition_2} of Proposition~\ref{prop: p-Schatten-embeddings}, it follows that $\mathcal{C}_2[v,\bi{A},S]$ is Hilbert-Schmidt. Hence, $\mathcal{C}[v,\bi{A},S]$ is Hilbert-Schmidt.
Together with Lemma~\ref{lem:Frechet_G}, it follows that $\mathcal{C}$ is 
continuous. Fr\'echet differentiability and the formula for the derivative 
follow from Lemma~\ref{lem:Frechet_G} and the chain rule. 

\emph{Part (ii):} If $\mathbb{X}_1,\dots,\mathbb{X}_4$ are Hilbert spaces, 
$\mathcal{A}\in L\left(\mathbb{X}_1,\mathbb{X}_2\right)$ and 
$\mathcal{B}\in L\left(\mathbb{X}_3,\mathbb{X}_4\right)$, then a straightforward computation 
shows that the adjoint of the linear mapping 
$\HS{\mathbb{X}_2}{\mathbb{X}_3}\to \HS{\mathbb{X}_1}{\mathbb{X}_4}$, 
$\mathcal{T}\mapsto \mathcal{BTA}$ is given by the mapping 
$\HS{\mathbb{X}_1}{\mathbb{X}_4}\to \HS{\mathbb{X}_2}{\mathbb{X}_3}$, $\mathcal{S}\mapsto \mathcal{B}^*
\mathcal{S}\mathcal{A}^*$ and that 
$\Re \in L\left(\HSs{\mathbb{X}_j}\right)$ is a self-adjoint projection operator.
Note that $\bCov[s_{\partial\Omega}] \in S_4\left(H^{1/2}(\partial \Omega), H^{-1/2}(\partial \Omega)\right)$ from Proposition~\ref{prop: p-Schatten-embeddings}. 
Furthermore, by Proposition~\ref{prop: p-Schatten-embeddings} and Lemma~\ref{lem:MS}, $\mathcal{E} \in HS\left(H_0^{-1}(\Omega_0), H^1(\Omega_0)\right), M_S \in HS\left(H^1(\Omega_0), H_0^{-1}(\Omega_0)\right)$. Hence, $\mathcal{E} (M_S+\Tr_{\partial \Omega}^* \bCov[s_{\partial\Omega}] \Tr_{\partial \Omega}) \in S_{4/3}\left(H_0^{-1}(\Omega_0)\right)$.
Now, the assertion follows from Lemma~\ref{lem:M_adjoint} and part (ii) of Lemma~\ref{lem:M_A}. 
\end{proof}

Introducing so-called forward propagators $H_{\alpha}$ and backward propagators $H_{\beta}$ by 
\begin{equation}
\begin{split}
    &H^{v, \bi{A}}_{\alpha_v}
    :=H^{v, \bi{A}}_{\alpha_{\bi{A}}}
    :=H^{v, \bi{A}}_{\alpha_S}
    :=H_{\beta_S}
    :=\Tr_{\Gamma} \mathcal{G}_{v,\bi{A}} 
    \in L\left(H_0^{-1}(\Omega), L^2(\Gamma)\right), \\
    &H^{v, \bi{A}}_{\beta_v}
    :=\Tr_{\Gamma} \mathcal{G} M_S \mathcal{G}_{v,\bi{A}}^* \in L\left(H_0^{-1}(\Omega), L^2(\Gamma)\right), \\
    &H^{v, \bi{A}}_{\beta_{\bi{A}}}:=\Tr_{\Gamma} \mathcal{G}_{v,\bi{A}} M_S (\nabla \mathcal{G}_{v,\bi{A}})^*   \in L\left(H_0^{-1}(\Omega), L^2(\Gamma)\right),
\end{split} 
\end{equation}
the Fr\'echet derivative and its adjoint in Theorem~\ref{the: HS-operator} can be reformulated as 
\begin{align}
&\mathcal{C}^{\prime}[v,\bi{A},S](\partial v,\partial\bi{A},\partial S)=-2\Re(H^{v, \bi{A}}_{\alpha_v} M_{\partial_v} {H^{v, \bi{A}}_{\beta_v}}^*)+2 \Re(H^{v, \bi{A}}_{\alpha_{\bi{A}}} M_{2i \partial \bi{A}} {H^{v, \bi{A}}_{\beta_{\bi{A}}}}^*)+H^{v, \bi{A}}_{\alpha_S} M_{\partial_S} {H^{v, \bi{A}}_{\beta_S}}^*, \nonumber\\
& \mathcal{C}^{\prime}[v,\bi{A},S]^*\mathcal{D}
= \left(\begin{array}{c}
-2\Diag\left({H^{v, \bi{A}}_{\alpha_v}}^* \Re(\mathcal{D}) H^{v, \bi{A}}_{\beta_v}\right)\\
-4i\,{\biDiag}\left({H^{v, \bi{A}}_{\alpha_{\bi{A}}}}^* \Re(\mathcal{D}) H^{v, \bi{A}}_{\beta_{\bi{A}}}\right)\\
\Diag\left({H^{v, \bi{A}}_{\alpha_S}}^* \Re(\mathcal{D}) H^{v, \bi{A}}_{\beta_S}\right)\end{array}\right).
\label{eq: Frechet-derivative_propagator}
\end{align}

These propagators $\mathcal{H}_{\alpha}, \mathcal{H}_{\beta}$ have a physical interpretation in helioseismology that will be discussed in Section~\ref{sec: iterative helioseismic holography}.

\section{On the algorithmic realization of iterative regularization methods}\label{sec: Algorithm}

For notational simplicity, we will use $\bi{q}=(v, \bi{A}, S)$ throughout this section. Then we formally have to solve the operator equation 
\[
\mathcal{C}[\bi{q}]=\Corr\qquad \text{with }
\Corr:=\frac{1}{N}\sum_{n=1}^N \Tr_{\Gamma}\psi_n \otimes 
\overline{\Tr_{\Gamma}\psi_n}.
\]

\subsection{Avoiding the computation of $\Corr$}
\label{sec: Avoid Corr}
Computing $\Corr$ from the primary data $\Tr_{\Gamma} \psi_n$ in 
a preprocessing step drastically increases the dimensionality of the 
data. In helioseismology, the data set with the best resolution consists of Doppler images of size $4096 \times 4096$. This leads to approximately $10^{14}$ independent two-point correlations, at each frequency. Hence, the surface cross-correlation is a noisy five-dimensional data set of immense size, which is infeasible to use in inversions directly. 
Moreover, these two-point correlations are extremely noisy. 
In traditional approaches such as time-distance helioseismology, one usually reduces the cross-correlation in an a priori step to a smaller number of physically interpretable quantities with an acceptable signal-to-noise ratio. However, this leads to a significant loss of information, see \cite{Pourabdian2020}. 

To use the full information content of $\Corr$ without the need to ever compute these correlations, we exploit the fact that the adjoint of the Fr\'echet derivative of the forward operator is of the form $\mathcal{C}^{\prime}[\bi{q}]^*\mathcal{D} = \Diag({\mathcal{H}_{\alpha}^q}^* \Re(\mathcal{D})\mathcal{H}_{\beta}^q)$, see 
eq.~\eqref{eq: Frechet-derivative_propagator}.
As $\Re(\Corr)=\Corr$, pulling the sum outside yields
\begin{align}\label{eq:backpropagation_correlation}
\mathcal{C}^{\prime}[\bi{q}]^*\Corr = \frac{1}{N}\sum_{n=1}^N
\Diag\left({\mathcal{H}_{\alpha}^q}^*\Tr_{\Gamma}\psi_n \otimes {\mathcal{H}_{\beta}^q}^*\Tr_{\Gamma}\psi_n\right).
\end{align}
We will show in Section~\ref{sec: iterative helioseismic holography} that traditional helioseismic holography can be interpreted as the application of $\mathcal{C}^{\prime}[\bi{q}]^*$ to $\Corr$. Since $ \frac{1}{N}\sum_{n=1}^N\Diag(\dots)$ can be interpreted as computing diagonal correlations of the back-propagated signals, eq.~\eqref{eq:backpropagation_correlation} may be paraphrased as first 
back-propagating signals and then correlating them, rather than vice versa.

\subsection{Iterative regularization methods without image space vectors}
For ill-posed inverse problems, the adjoint of the linearized forward operator is typically a bad approximation of the inverse. To obtain a quantitative imaging method, we can improve the approximation in \eqref{eq:backpropagation_correlation} by implementing an iterative regularization method. We will focus on the Iteratively regularized Gauss-Newton Method (IRGNM) with inner Conjugate Gradient iterations, but the discussion below also applies to other commonly used methods such as Landweber iteration or the Newton-CG method. IRGNM is defined by 
\begin{equation}
\label{eq: Iteration}
\begin{split}
 \delta \bi{q}_n   &=\operatorname{argmin}_{\bi{q}}
 \left\|\mathcal{C}[\bi{q}_{n}]+\mathcal{C}^{\prime}[\bi{q}_n]\bi{q}-\Corr\right\|_{\mathbb{Y}}^2 + \alpha_n\|\bi{q}+\bi{q}_n-\bi{q}_0\|_{\mathbb{X}}^2\\
&= \left( \mathcal{C}^{\prime}[\bi{q}_n]^* \mathcal{C}^{\prime}[\bi{q}_n] +\alpha_n \Id \right)^{-1} \left(\mathcal{C}^{\prime}[\bi{q}_n]^* \left( \Corr-\mathcal{C}[\bi{q}_n] \right)+\alpha_n (\bi{q}_0-\bi{q}_n )\right)\\
    \bi{q}_{n+1}&=\bi{q}_n+\delta \bi{q}_n.
\end{split}
\end{equation}
Here $\bi{q}_0$ defines the initial guess. Since the image space $\mathbb{Y}$ of the forward operator is high dimensional, direct evaluations of $\mathcal{C}[\bi{q}]$ and $\mathcal{C}^{\prime}[\bi{q}]$ must be avoided. However, IRGNM with inner CG iterations as well as other iterative regularization methods only require the operations
$\bi{q}\mapsto \mathcal{C}^{\prime}[\bi{q}]^* \mathcal{C}[\bi{q}]
$ and
\begin{equation}\label{eq:defi_sensitivity_kernel}
   \left( \mathcal{C}^{\prime} [\bi{q}]^*\ \mathcal{C}^{\prime}[\bi{q}]\ \partial \bi{q}\right) (\bi{x})= \int_{\Omega} K(\bi{x}, \bi{y}) \partial \bi{q}(\bi{y}) \,\mathrm{d} \bi{y}.
\end{equation}
We will refer to the integral kernels $K$ of $\mathcal{C}^{\prime} [\bi{q}]^*\ \mathcal{C}^{\prime}[\bi{q}]$ as sensitivity kernels for the normal equation. 
In Section~\ref{sec: sensitivity_Kernel} they will be described for various settings of interest in terms of forward-backward operators
\begin{align}\label{eq:forwardbackward}
    \mathcal{F}_{\alpha, \beta}:=\mathcal{H}_{\alpha}^* \mathcal{H}_{\beta} : H_0^{-1}(\Omega) \rightarrow H^1(\Omega), \qquad& (\mathcal{F}_{\alpha, \beta} \psi)(\bi{x})=\int_{\Omega} F_{\alpha, \beta}(\bi{x}, \bi{y}) \psi(\bi{y}) \,\mathrm{d} \bi{y}.
\end{align}
In our numerical tests in helioseismology reported in Section~\ref{sec: Inversions}, the bottleneck concerning computation time is the evaluation of the Green function involved in the definitions of the propagators $\mathcal{H}_{\alpha}$ and $\mathcal{H}_{\beta}$.
To accelerate these computations, we use separable reference Green's functions $G_0:= G_{\bi{q}_0}$ discussed in \ref{sec: Green} and \ref{sec: Uniform Green} and the corresponding integral operator $\mathcal{G}_0$ as well as the algebraic identity
\begin{subequations}\label{Green_approx}
\begin{align}
\mathcal{G}_{\bi{q}}=\left[ \Id+\mathcal{G}_0 (L_{\bi{q}}-L_0) \right]^{-1} \mathcal{G}_0. 
\end{align}
This identity, with $L_{\bi{q}}:=\mathcal{G}_{\bi{q}}^{-1}$ and $L_0:=\mathcal{G}_0^{-1}$, is equivalent to 
$     \mathcal{G}_0-\mathcal{G}_{\bi{q}} = \mathcal{G}_0 (L_{\bi{q}}-L_0) \mathcal{G}_{\bi{q}}.$  
The operators $L_{\bi{q}}, L_0:H^1(\Omega)\to H^{-1}_0(\Omega)$ represent 
the corresponding sesquilinear forms in eq.~\eqref{eq:weak_formulation} in the 
proof of Proposition \ref{prop:wellposed} and involve both the differential operator and the boundary condition. As both the boundary operator $B$ and the leading order differential operator are independent of the parameters $\bi{q}$, they cancel out, and 
\begin{align}
(L_{\bi{q}}-L_0)\psi = (v-v_0)\psi - 2i (\bi{A}-\bi{A}_0)\cdot \nabla \psi.
\end{align}
\end{subequations}

This approach is efficient since the operator $\mathcal{G}_0$ can be solved with one-dimensional code and the operator the calculation of $\left[ \Id+\mathcal{G}_0 (L_{\bi{q}}-L_0) \right]$ can typically be restricted to a supported area of $L_{\bi{q}}-L_0$.
Usually, we compute a pivoted LU-decomposition of $\Id+\mathcal{G}_0 (L_{\bi{q}}-L_0) $ and solve for a list of input sources $G(\cdot, \bi{x}), \bi{x} \in \Gamma$. Furthermore, we can use low-rank approximations for $\mathcal{G}_0$ 
based on the expansions in \ref{sec: Green} for solar-like medium and \ref{sec: Uniform Green} for uniform medium.  

\subsection{Noise and likelihood modelling}
\label{sec: Noise model}
In this section, we study the noise model in order to step forward to the full likelihood modeling and to create realistic noise. The main noise term is realization noise. Recall that the wavefield $\psi$ is a realization of a Gaussian random process with covariance operator $\mathcal{C}[\bi{q}]$. 

The covariance matrix of Gaussian correlation data can be computed by Isserlis theorem \cite{Isserlis1918} and is given by fourth-order correlations (e.g. \cite{Gizon2004}, \cite{Fournier2014}, \cite{Gizon2018}):

\begin{eqnarray*}
    \fl \E{\psi(\bi{r}_1) \overline{\psi(\bi{r}_2)} \overline{\psi(\bi{r}_3)} \psi(\bi{r}_4)}=\E{\psi(\bi{r}_1) \overline{\psi(\bi{r}_2)}} \E{\overline{\psi(\bi{r}_3)} \psi(\bi{r}_4)}\\
    +\E{\psi(\bi{r}_1) \overline{\psi(\bi{r}_3)}} \E{\overline{\psi(\bi{r}_2)} \psi(\bi{r}_4)}+\E{\psi(\bi{r}_1) \psi(\bi{r}_4)} \E{\overline{\psi(\bi{r}_2)} \psi(\bi{r}_3)}.
\end{eqnarray*}

The third term vanishes as $\E{\psi(\bi{r}_1) \psi(\bi{r}_2)}=\int_{\Omega} \int_{\Omega} G(\pmb{r}_1, \pmb{z}_1) G(\pmb{r}_2, \pmb{z}_2)\E{s(\bi{z}_1) s(\bi{z}_2)} \,\mathrm{d} \bi{z}_1 \mathrm{d} \bi{z}_2$ and $\E{s(\bi{z}_1) s(\bi{z}_2)} = 0$. Hence, we observe

\begin{eqnarray}
    \nonumber
    \fl \Cov(C(\bi{r}_1, \bi{r}_2), C(\bi{r}_3, \bi{r}_4))&=\E{\psi(\bi{r}_1) \overline{\psi(\bi{r}_2)} \overline{\psi(\bi{r}_3)} \psi(\bi{r}_4)}-\E{\psi(\bi{r}_1) \overline{\psi(\bi{r}_2)}} \E{\overline{\psi(\bi{r}_3)} \psi(\bi{r}_4)}\\
    \nonumber
    \fl&=\E{\psi(\bi{r}_1) \overline{\psi(\bi{r}_3)}} \E{\overline{\psi(\bi{r}_2)} \psi(\bi{r}_4)}=\mathcal{C}(\bi{r}_1, \bi{r}_3) \mathcal{C} (\bi{r}_4, \bi{r}_2).
\end{eqnarray}

Therefore, we can define the data covariance operator by 
\begin{equation}
\label{eq: C_4}
    \mathcal{C}_4[\bi{q}] \in L \left( L^2(\Gamma) \times L^2(\Gamma) \right), \qquad \mathcal{C}_4[\bi{q}] (f \otimes g)=\mathcal{C}[\bi{q}](f) \otimes \mathcal{C}[\bi{q}](g).
\end{equation}
Hence, if we choose a quadratic log-likelihood approximation, we are formally lead to replace $\|\cdot\|_{\mathbb{Y}}^2$ in \eqref{eq: Iteration} by $\|\mathcal{C}_4[\bi{q}_n]^{-1/2}\cdot\|_{\mathbb{Y}}^2$.
However, with this replacement, the iteration \eqref{eq: Iteration} would in general not be well defined and numerically unstable since $\mathcal{C}_4[\bi{q}_n]$ is not boundedly invertible. 
Note that the operator $\mathcal{C}_4[\bi{q}_n]$ is not boundedly invertible. Even if $\mathcal{C}_4[\bi{q}_n]$ is injective, the inverse is given by $\mathcal{C}_4[\bi{q}_n]^{-1} = \mathcal{C}[\bi{q}_n]^{-1} \otimes \mathcal{C}[\bi{q}_n]^{-1}$, and this cannot be bounded in infinite dimensions since $\mathcal{C}[\bi{q}_n]$ is Hilbert-Schmidt. Hence, the mentioned replacement is not well defined and numerically unstable.
Therefore, we choose a bounded, self-adjoint, positive semi-definite approximation 
\begin{align}\label{eq:defi_Gamma_n}
 \mathcal{C}[\bi{q}_n]^{-1}  \approx \Gamma_n \in L(L^2(\Gamma)),
\end{align}
e.g., by a truncated eigenvalue decomposition or by 
Lavrentiev regularization $\Gamma_n= (\beta \Id+\mathcal{C}[\bi{q}_n])^{-1}$. 
Then $\mathcal{C}_4[\bi{q}_n]^{-1}\approx \Gamma_n\otimes \Gamma_n$ and 
$\mathcal{C}_4[\bi{q}_n]^{-1/2}\approx \Gamma_n^{1/2}\otimes \Gamma_n^{1/2}$. The parameter $\beta$ may model the presence of 
measurement and modelling errors in addition to the realization noise 
modelled by $\mathcal{C}_4[\bi{q}_n]^{-1/2}$. Then the iteration 
\eqref{eq: Iteration} is replaced by
\begin{equation*}
\begin{split}
 \delta \bi{q}_n   &=\operatorname{argmin}_{\bi{q}}
 \left\|(\Gamma_n^{1/2}\otimes \Gamma_n^{1/2})
 \mathcal{C}[\bi{q}]+\mathcal{C}^{\prime}[\bi{q}_n]\bi{q}-\Corr\right\|_{\mathbb{Y}}^2 + \alpha_n\|\bi{q}+\bi{q}_n-\bi{q}_0\|_{\mathbb{X}}^2\\
&= \left( \mathcal{C}^{\prime}[\bi{q}_n]^* 
(\Gamma_n\otimes \Gamma_n)
\mathcal{C}^{\prime}[\bi{q}_n] +\alpha_n I \right)^{-1} \left(\mathcal{C}^{\prime}[\bi{q}_n]^* (\Gamma_n\otimes \Gamma_n)
\left( \Corr-\mathcal{C}[\bi{q}_n] \right)+\alpha_n (\bi{q}_0-\bi{q}_n )\right)\\
    \bi{q}_{n+1}&=\bi{q}_n+\delta \bi{q}_n.
\end{split}
\end{equation*}
Note that for numerical efficiency it is very fortunate that the 
covariance operator $\mathcal{C}_4$ of the correlation data has 
the separable structure \eqref{eq: C_4}. Further note from the 
second line of the last equation that we only need $\Gamma_n$, not 
$\Gamma_n^{1/2}$. 
%

\section{Forward problems in local helioseismology}
\label{sec: Helmholtz}

In this section, we discuss applications of the model problem 
considered in the previous sections to helioseismology. 
\subsection{Acoustic oscillations in the Sun}
\label{sec: assumptions}

$\Omega_0$ will denote the interior of the Sun (typically $\Omega_0=B(0, R_{\odot})$ with $R_{\odot}=696\,$Mm), whereas 
$\Omega$ may also include parts of the solar atmosphere. 
The measurement region we consider is an open subset $\Gamma$ of the visible surface $\partial \Omega_0$, accounting for the fact that in typical helioseismic applications, measurements are only available on the near side of the solar surface. Given that solar oscillations near the solar surface are primarily oriented in the radial direction \cite{CD2003}, there is also a lack of Doppler information near the poles. This phenomenon results in leakage, causing challenges such as incomplete decoupling of normal modes of oscillation (e.g. \cite{Schou1994b, Hill1998}). In the subsequent analysis, we will exclusively work in the frequency domain.

The propagation of acoustic waves in a heterogeneous medium like the Sun can be described by the differential equation
\begin{eqnarray}
    \label{eq: vectorial}
    -(\omega+i \gamma+i \bi{u} \cdot \nabla)^2 \bi{\zeta}-\frac{1}{\rho} \nabla(\rho c^2\nabla \cdot \bi{\zeta})=\bi{F},
\end{eqnarray}
where we have ignored gravitational effects and have assumed an adiabatic approximation (\cite{Gizon2017}). The random source term $\bi{F}$ describes the stochastic excitation of waves by turbulent motions and $\bi{\zeta}$ is the Lagrangian wave displacement vector. As usual, we denote with $\rho$ the density, $c$ the sound speed, $\gamma$ the damping, and $\bi{u}$ the flow field.
If we furthermore neglect second order terms in $\gamma, \bi{u}$, equation \eref{eq: vectorial} can be converted into a Helmholtz-like equation (\cite{Gizon2018}, inspired by \cite{Lamb1908})
\begin{eqnarray}
    \label{eqn:Helmholtz}
    L \psi:=-(\Delta + V) \psi -\frac{2 i \omega}{\rho^{1/2} c} \rho \bi{u} \cdot \nabla \frac{\psi}{\rho^{1/2} c}=s,
\end{eqnarray}
where $\psi=\rho^{1/2} c^2 \nabla \cdot \bi{\zeta}$ is the scaled wavefield and $s=\rho^{1/2} c^2 \nabla \cdot \bi{F}$ a stochastic source term. The potential $V$ is defined by
\begin{eqnarray}
\label{eqn:wavenumber}
V=\frac{\omega^2+ 2 i \omega \gamma-\omega_c^2}{c^2},\qquad& \omega_c^2=\rho^{1/2}c^2 \Delta (\rho^{-1/2}).
\end{eqnarray}
The frequency $\omega_c$ is recognized as the acoustic cutoff frequency. This cutoff frequency arises due to the abrupt decline in density near the solar surface and results in the trapping of acoustic modes with frequencies below the acoustic cutoff frequency. Modes with frequencies surpassing the acoustic cutoff frequency can propagate through the solar atmosphere. 

The conditions on $c, \rho, \gamma, \bi{u}$ are summarized in Assumption~\ref{ass: helioseismic setting}.
\begin{assumption}
\label{ass: helioseismic setting}
    Suppose that for some $B_0\in L\left(H^{1/2}(\partial\Omega),H^{-1/2}(\partial\Omega)\right)$, $k\in\C$ and some set $\mathcal{A}_{c_{\text{min}}, \rho_{\text{min}}, k} \subset W^{1, \infty}(\Omega, \R) \times W^{2, \infty}(\Omega, \R) \times L^{\infty}(\Omega, [0, \infty)) \times  W^{\infty}(\diver,\Omega) \times L^{\infty}(\Omega, [0, \infty))$ of admissible parameters $c, \rho, \gamma, \bi{u}, S$ containing 
    some reference parameters $c_{\mathrm{ref}}, \rho_{\mathrm{ref}}, \gamma_{\mathrm{ref}}, \bi{u}_{\mathrm{ref}}, S_{\mathrm{ref}}$
    such that the following holds true:
\begin{subequations}\label{eqs:helioassumptions}
\begin{align}
\label{eq: parameter_regularity}
&\inf_{\bi{x} \in \Omega} c \geq c_{\text{min}} >0, \quad \inf_{\bi{x} \in \Omega} \rho \geq \rho_{\text{min}} >0, \\
\label{eq: parameter_exterior}
&q = q_{\mathrm{ref}}\qquad \mbox{for }q\in\{c, \rho, \gamma, \bi{u}, S\} 
&&\mbox{in }\Omega\setminus \Omega_0,\\
\label{eq:nomassflow}
&\bi u =0, \, S=0&&\text{in }\Omega\setminus \Omega_0,\\
\label{ass: conservation of mass}
&\diver(\rho \bi{u})=0&&\text{on }\Omega,\\
\label{eq: BC}
&B: H^{1/2}(\partial \Omega) \to H^{-1/2}(\partial \Omega) \text{ satisfies the conditions~\eqref{eq:assImB}--\eqref{eq:assBcompact} }
\end{align}
\end{subequations}
\end{assumption}

For the flow field, we incorporate a mass conservation constraint (equation~\eqref{ass: conservation of mass}). Additionally, we assume that the flow field does not intersect the computational boundary (equation~\eqref{eq:nomassflow}). 
 Various boundary conditions, in particular radiation boundary conditions and learned infinite elements, and their efficacy are extensively discussed in \cite{Barucq2018,Fournier2017,Preuss2020}. It is notable that the most popular choices of boundary conditions in helioseismology, such as radiation boundary conditions, Sommerfeld boundary conditions, or free boundary conditions, are incorporated in Assumption~\eqref{eq: BC}.

We define the operator $\mathcal{P}$ that transforms the parameters in the wave equation~\eqref{eqn:Helmholtz} into the form of equation~\eqref{eqs:forward} by
\begin{align}
\label{eq: recast}
\mathcal{P}: \mathcal{A}_{c_{\text{min}}, \rho_{\text{min}}, k} \rightarrow \mathcal{B}_k, \qquad \mathcal{P}(c, \rho, \gamma, \bi{u}, S)=(v, \bi{A}, S), 
\end{align}
where 
\begin{subequations}
\begin{align}
    \label{eq: k, A}
    & k^2=\frac{\omega^2+2i\omega \gamma}{c_0^2}-\frac{1}{4 H^2}, \hspace{2cm} \bi{A}=\omega \frac{1}{c^2} \bi{u}\\ 
    \label{eq: v}
    &v=k^2-\frac{\omega^2+2i \gamma \omega}{c^2}+\rho^{1/2} \Delta (\rho^{-1/2})-2 i \omega \frac{1}{\rho^{1/2} c} \rho \bi{u} \cdot \nabla \frac{1}{\rho^{1/2} c}.
\end{align}
\end{subequations}

\begin{figure}
   \centering
   \includegraphics[width=\hsize]{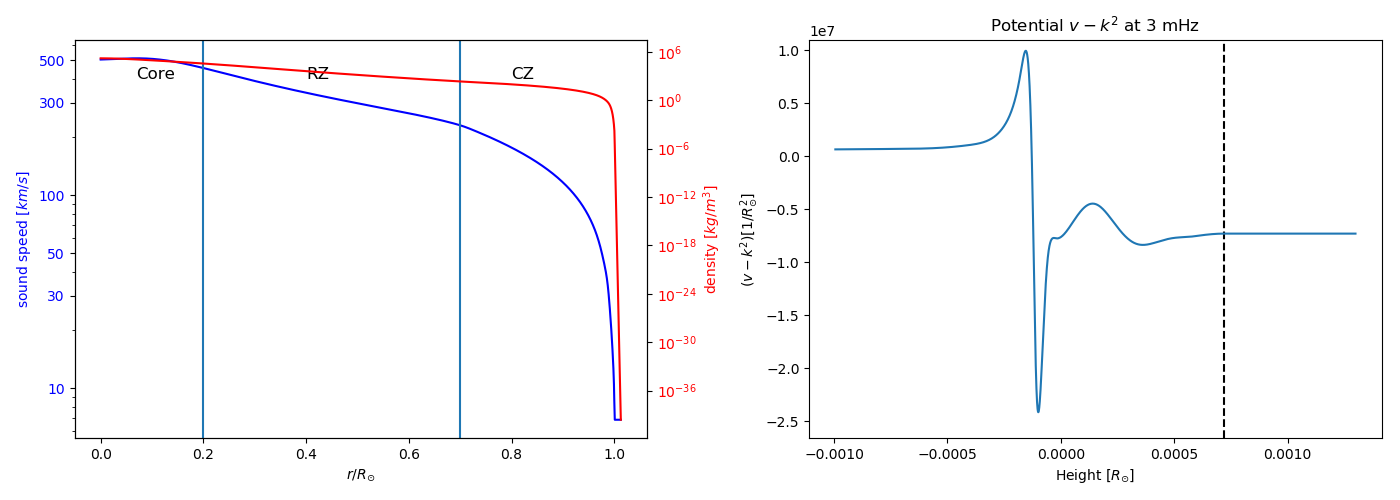}
      \caption{The left panel shows the sound speed and density obtained from the Solar Model S \cite{Dalsgaard1996} in the solar core, the convection zone (CZ), and the radiation zone (RZ). The right panel shows the potential close to the surface for $\omega/2 \pi=3~$mHz.}
         \label{fig: solar model}
   \end{figure}

In Figure~\ref{fig: solar model}, we present the acoustic sound speed, the density, and the scalar potential $v$ as obtained from the Solar Model S and smoothly extended to the atmosphere. Modeling the forward problem for the Sun remains challenging due to the substantial density gradients near the surface, leading to strong variations of the  scalar potential $v$ near the solar surface.

\begin{lemma}
\label{lem: helioseismic setting}
The operator $\mathcal{P}$, defined in equation~\eqref{eq: recast}, is well-defined in the sense that for all parameters $(c, \rho, \gamma, \bi{u}, S) \in \mathcal{A}_{c_{\text{min}}, \rho_{\text{min}}, k}$
we have $\mathcal{P}(c, \rho, \gamma, \bi{u}, S) \in \mathcal{B}_k$, and this map is continuous.
\end{lemma}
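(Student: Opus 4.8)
The plan is to verify the two requirements of the statement separately: (i) that $\mathcal{P}(c,\rho,\gamma,\bi{u},S)$ lands in $\Bfrak_k$, i.e.\ the transformed parameters $(v,\bi{A},S)$ satisfy the admissibility conditions of Assumption~\ref{ass:well_posed} (and the positivity condition on $S$), and (ii) that $\mathcal{P}$ is continuous with respect to the relevant topologies. For (i), I would go through conditions \eqref{eq:assPositivity}--\eqref{eq:assBcompact} one by one. The boundary conditions \eqref{eq:assImB}--\eqref{eq:assBcompact} on $B$ are inherited directly from \eqref{eq: BC} in Assumption~\ref{ass: helioseismic setting}, so nothing is to be done there. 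Condition \eqref{eq:assnoflux}, $\bi{A}\cdot\bi{n}=0$ on $\partial\Omega$, follows because $\bi{A}=\omega c^{-2}\bi{u}$ and $\bi{u}=0$ in $\Omega\setminus\Omega_0\supset\partial\Omega$ by \eqref{eq:nomassflow}. The main content is condition \eqref{eq:assPositivity}, $\diver\bi{A}-\Im k^2+\Im v\le 0$: one computes from \eqref{eq: k, A}--\eqref{eq: v} that $\Im v = \Im k^2 - 2\omega\gamma/c^2 + (\text{terms that must be checked to be real or to cancel})$ and that $\diver\bi{A}=\omega\,\diver(c^{-2}\bi{u})$, which via the mass-conservation constraint \eqref{ass: conservation of mass} $\diver(\rho\bi{u})=0$ should combine so that $\diver\bi{A}$ contributes a term matching the flow term in $\Im v$; after cancellation the left-hand side reduces to $-2\omega\gamma/c^2\le 0$ using $\gamma\ge 0$, $\omega>0$. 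This algebraic reduction is the heart of the matter and I would carry it out carefully, being attentive to the imaginary part of the first-order flow term $-2i\omega(\rho^{1/2}c)^{-1}\rho\bi{u}\cdot\nabla((\rho^{1/2}c)^{-1})$ and whether it is purely real (it is, since $\rho,c,\bi{u}$ are real-valued), and to the role of the term $\rho^{1/2}\Delta(\rho^{-1/2})$, which is real and hence drops out of $\Im v$ entirely.

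Next I would check that $(v,\bi{A})$ actually lie in the spaces required, i.e.\ $v\in L^\infty(\Omega,\C)$ and $\bi{A}\in W^\infty(\diver,\Omega)$, and that $v=v_{\mathrm{ref}}$, $\bi{A}=\bi{A}_{\mathrm{ref}}$ in $\Omega\setminus\Omega_0$ so that the perturbation lies in $\mathbb{X}_{\mathcal{G}}=L^\infty(\Omega_0)\times W^\infty_0(\diver,\Omega_0)$. The regularity $\bi{A}=\omega c^{-2}\bi{u}\in L^\infty$ with $\diver\bi{A}\in L^\infty$ follows from $c\in W^{1,\infty}$, $\inf c\ge c_{\min}>0$ (so $c^{-2}\in W^{1,\infty}$), and $\bi{u}\in W^\infty(\diver,\Omega)$; the product rule gives $\diver\bi{A}=\omega(c^{-2}\diver\bi{u}+\nabla(c^{-2})\cdot\bi{u})\in L^\infty$. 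For $v$ one checks term by term: $c^{-2}\in W^{1,\infty}\subset L^\infty$, $\gamma\in L^\infty$, $\rho^{1/2}\Delta(\rho^{-1/2})\in L^\infty$ because $\rho\in W^{2,\infty}$ and $\inf\rho\ge\rho_{\min}>0$, and the flow term is a product of $L^\infty$ functions. The condition $\bi{A}\cdot\bi{n}=0$ on $\partial\Omega_0$ (needed for membership in $W^\infty_0(\diver,\Omega_0)$) again follows from $\bi{u}=0$ on $\Omega\setminus\Omega_0$ together with continuity/trace considerations, or is part of how $\mathcal{A}_{c_{\min},\rho_{\min},k}$ is set up via \eqref{eq:nomassflow}.

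For part (ii), continuity of $\mathcal{P}$, I would observe that each component of $\mathcal{P}$ is built from the inputs by the algebraic operations of multiplication, composition with the smooth map $t\mapsto t^{-1}$ (Lipschitz on $[c_{\min},\infty)$ resp.\ $[\rho_{\min},\infty)$), first and second differentiation, and addition. Each of these is continuous between the stated Banach spaces: multiplication $W^{1,\infty}\times W^{1,\infty}\to W^{1,\infty}$ and $L^\infty\times L^\infty\to L^\infty$ is bounded bilinear hence continuous; $c\mapsto c^{-2}$ is continuous $W^{1,\infty}(\Omega,[c_{\min},\infty))\to W^{1,\infty}$ and $\rho\mapsto\rho^{-1/2}$ is continuous $W^{2,\infty}(\Omega,[\rho_{\min},\infty))\to W^{2,\infty}$; $\Delta:W^{2,\infty}\to L^\infty$ and $\nabla:W^{1,\infty}\to L^\infty$ are bounded linear. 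Composing these, $v$ depends continuously on $(c,\rho,\gamma,\bi{u})$ in $L^\infty$ and $\bi{A}$ depends continuously on $(c,\bi{u})$ in $W^\infty(\diver,\Omega)$, while the $S$-component is the identity. Hence $\mathcal{P}$ is continuous.

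The main obstacle I anticipate is the precise bookkeeping in the verification of the positivity condition \eqref{eq:assPositivity}: one must correctly identify which contributions to $\Im v$ and to $\diver\bi{A}$ survive, confirm the sign of the flow-term cancellation using $\diver(\rho\bi{u})=0$, and make sure no hidden imaginary part has been overlooked in the somewhat intricate definition \eqref{eq: v} of $v$ (in particular the interplay between the definition of $k^2$ in \eqref{eq: k, A}, which carries the $2i\omega\gamma/c_0^2$ term at the reference sound speed $c_0$, and the $-(\omega^2+2i\gamma\omega)/c^2$ term in $v$ at the local $c$). Everything else is routine product-rule estimates and continuity of elementary maps between $L^\infty$-type spaces.
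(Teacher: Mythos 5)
Your proposal is correct and follows essentially the same route as the paper: check the boundary conditions trivially, get $\bi{A}\cdot\bi{n}=0$ from the vanishing of $\bi{u}$ outside $\Omega_0$, and verify \eqref{eq:assPositivity} by rewriting $\diver\bi{A}$ with the mass-conservation constraint $\diver(\rho\bi{u})=0$ so that it cancels the imaginary part of the first-order flow term in $v$, leaving $\diver\bi{A}-\Im k^2+\Im v=-2\gamma\omega/c^2\le 0$ (the $\Im k^2$ terms cancelling and $\rho^{1/2}\Delta(\rho^{-1/2})$ being real), exactly as in the paper's proof. Your additional bookkeeping of the $L^\infty$/$W^{\infty}(\diver)$ regularity and the continuity of the elementary operations only fleshes out what the paper states in one line.
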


\begin{proof}
By equations~\eqref{eq: k, A}, \eqref{eq: v}, we have $v \in L^{\infty} (\Omega, \C)$,  $\bi{A} \in W^{\infty}(\diver, \Omega_0)$, and the mapping is continuous. 
The conditions~\eqref{eq:assImB}--\eqref{eq:assBcompact} are obviously satisfied, and \eqref{eq:assnoflux} is satisfied by Assumption~\ref{ass: conservation of mass}. For condition~\eqref{eq:assPositivity}, we note that $\nabla \cdot \bi{A}=\omega \nabla \cdot \frac{\rho \bi{u}}{(\rho^{1/2} c)^2}=\frac{2 \omega}{\rho^{1/2} c} \rho \bi{u} \cdot \nabla \frac{1}{\rho^{1/2} c}$, where we have used \eqref{ass: conservation of mass}. Therefore,
\begin{align*}
    \diver \bi{A}-\Im k^2 +\Im v =-\frac{2 \gamma \omega}{c^2} \leq 0.\qquad\qquad \qedhere
\end{align*}
\end{proof}

Because of Assumption~\eqref{eq: parameter_exterior}, $c, \rho, \nabla \rho, \gamma, \bi{u}, \nabla \bi{u}$ are fixed at $\partial \Omega_0$ and in the exterior. Therefore, the space of parameter perturbations is
\begin{align*}
    \mathbb{X}_{\mathcal{P}} := W_0^{1, \infty}(\Omega_0,\R) \times W_0^{2, \infty}(\Omega_0, \R) \times L_0^{\infty}(\Omega_0) \times W_0^{\infty}(\diver, \Omega_0) \times L^{\infty}(\Omega_0)
\end{align*}
where $W_0^{\infty}(\diver, \Omega_0)=\{\bi{u} \in W^{\infty}(\diver, \Omega_0): \bi{u} \cdot \bi{n} = 0 \text{ on } \partial \Omega\}$. 
 \begin{lemma}
     For $c_{\text{min}}, \rho_{\text{min}}>0$ and $k \in \C$, the operator $\mathcal{P}$ is Fr\'echet differentiable in the interior of $\mathcal{A}_{c_{\text{min}}, \rho_{\text{min}}, k}$ with Fr\'echet derivative
     $\mathcal{P}':\mathbb{X}_{\mathcal{P}}\to \mathbb{X}_{\mathcal{G}}$ given by 
     \begin{align}
\label{eq: frechet P}
    \mathcal{P}^{\prime}[c, \rho, \gamma, \bi{u}, S](\partial c, \partial \rho, \partial \gamma, \partial \bi{u}, \partial S)=\begin{pmatrix}
\sum_{q \in \{c, \rho, \gamma, \bi{u} \}} (\partial_q v)(\partial q)\\
\sum_{q \in \{c, \bi{u} \}} (\partial_q \bi{A})(\partial q) \\
\partial S
\end{pmatrix}.
\end{align}
For arguments $(\tilde{v}, \tilde{\bi{A}}, \tilde{S}) \in L^{1}(\Omega_0;\C)\times L^{1}(\Omega_0;\Rd) \times 
L^{1}(\Omega_0;\R)\subset \mathbb{X}^{\prime}$, the values of the adjoint 
\begin{align*}
    \mathcal{P}^{\prime}[c, \rho, \gamma, \bi{u}, S]^*(\tilde{v}, \tilde{\bi{A}}, \tilde{S})=\begin{pmatrix}
    (\partial_c v)^* \tilde{v}+(\partial_c \bi{A})^* \tilde{\bi{A}} \\
    (\partial_{\rho} v)^* \tilde{v} \\
    (\partial_{\gamma} v)^* \tilde{v} \\
    (\partial_{\bi{u}} v)^* \tilde{v}+(\partial_{\bi{u}} \bi{A})^* \tilde{\bi{A}} \\
    \tilde{S}
    \end{pmatrix}
\end{align*}
belong to 
$W^{-2, 1}(\Omega_0, \R) \times W^{-1, 1}(\Omega_0, \R) \times L^{1}(\Omega_0, \R) \times L^1(\Omega_0, \Rd) \times L^{1}(\Omega_0, \R)
\subset \mathbb{X}_{\mathcal{P}}^{\prime}$.
\end{lemma}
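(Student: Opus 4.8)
The plan is to read off from \eqref{eq: k, A}--\eqref{eq: v} that $\mathcal{P}$ is a finite composition of elementary maps, to obtain Fr\'echet differentiability and the formula \eqref{eq: frechet P} from the chain and product rules, and then to get the adjoint as the transposed block matrix of formal adjoints of differential operators, identifying each block on the relevant predual by integration by parts.

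\emph{Differentiability.} First I would note that $v$ and $\bi{A}$ are built from $(c,\rho,\gamma,\bi{u})$ by finitely many applications of three elementary operations: (a) composition with a scalar function that is $C^\infty$ on the region cut out by $c\geq c_{\text{min}}$, resp.\ $\rho\geq\rho_{\text{min}}$ (such as $c\mapsto c^{-2}$ or $\rho\mapsto\rho^{-1/2}$), which induces a $C^\infty$ Nemytskii operator on $L^\infty(\Omega_0)$ and on the Banach algebras $W^{m,\infty}(\Omega_0)$ --- a standard fact; (b) the fixed linear differential operators $\nabla$ and $\Delta$, which are bounded $W^{m+1,\infty}\to W^{m,\infty}$ resp.\ $W^{m+2,\infty}\to W^{m,\infty}$; and (c) pointwise products, which are continuous bilinear on $W^{m,\infty}$. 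The regularity levels built into $\mathbb{X}_{\mathcal{P}}$ (namely $W^{1,\infty}$ for $c$ and $W^{2,\infty}$ for $\rho$) are exactly what the highest derivatives occurring in \eqref{eq: v} consume. Hence $\mathcal{P}$ is $C^\infty$, in particular Fr\'echet differentiable, as a map into $\mathcal{B}_k$, and the chain rule gives \eqref{eq: frechet P} with $\partial_q v$, $\partial_q\bi{A}$ the linear differential operators obtained by differentiating \eqref{eq: k, A}--\eqref{eq: v} term by term. I would also check that $\mathcal{P}'$ actually maps into $\mathbb{X}$: the perturbations vanish together with their relevant derivatives on $\partial\Omega_0$, so $\partial\bi{A}\cdot\bi{n}=0$ there, $\partial v$ and $\partial\bi{A}$ are supported in $\overline{\Omega_0}$, and every quantity lies in $L^\infty$.

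\emph{The adjoint.} Writing $\mathcal{P}'$ as a block matrix of bounded operators, its Banach-space adjoint is the transposed block matrix, and collecting for each output slot $q\in\{c,\rho,\gamma,\bi{u},S\}$ the transposes of the blocks in the corresponding column gives precisely the stated formula. It then remains to identify each transpose on the predual $L^1\subset(L^\infty)'$. Every block $\partial_q v$, $\partial_q\bi{A}$ is a linear differential operator $D\phi=\sum_{|\alpha|\leq m}a_\alpha\partial^\alpha\phi$ whose coefficients $a_\alpha$ are $C^\infty$ functions of the fixed background parameters, hence $a_\alpha\in W^{m,\infty}(\Omega_0)$, and whose order $m$ is read off from \eqref{eq: v}--\eqref{eq: k, A}: it is $0$ for $q\in\{\gamma,\bi{u},S\}$ (these enter $v$ and $\bi{A}$ without derivatives) and at most $2$ for the remaining two. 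Since the perturbations in $\mathbb{X}_{\mathcal{P}}$ vanish along with their derivatives up to the required order on $\partial\Omega_0$, integration by parts gives, for $g\in L^1(\Omega_0)$,
\[
\int_{\Omega_0}(D\phi)\,g=\Big\langle\phi,\ \textstyle\sum_{|\alpha|\leq m}(-1)^{|\alpha|}\partial^\alpha(a_\alpha g)\Big\rangle
\]
with no boundary contributions, so the transpose acts by $g\mapsto\sum_{|\alpha|\leq m}(-1)^{|\alpha|}\partial^\alpha(a_\alpha g)$; as $a_\alpha g\in L^1(\Omega_0)$ and $|\alpha|\leq m$, this lies in $W^{-m,1}(\Omega_0)\hookrightarrow(W_0^{m,\infty}(\Omega_0))'$. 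For a zeroth-order block the transpose is merely multiplication by the conjugated coefficient, sending $L^1$ to $L^1$. Assembling the five slots --- the two $v$-driven slots receiving the second- and first-order contributions, the others order zero --- then gives the target space asserted in the statement, and the continuity of $(\mathcal{P}')^*$ follows from that of $\mathcal{P}'$.

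\emph{Main obstacle.} The substantive part is the bookkeeping rather than any single deep step: one must justify the term-by-term differentiation of the rather involved expression \eqref{eq: v} through the smoothness of Nemytskii operators on $W^{m,\infty}$, and then carefully track, for each parameter $q$, the exact differential order of $\partial_q v$ and $\partial_q\bi{A}$ together with the Sobolev regularity of their coefficients, since it is this order that pins down the negative Sobolev exponent for the $q$-component of $(\mathcal{P}')^*$. Once the orders are fixed, the integration-by-parts identification with $W^{-m,1}$, its embedding into $(W_0^{m,\infty})'$, and the continuity estimates are routine.
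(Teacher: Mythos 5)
Your proposal is correct and follows essentially the same route as the paper: identify the blockwise partial derivatives $\partial_q v$, $\partial_q \bi{A}$ as linear differential operators of finite order in the perturbations with $W^{m,\infty}$ coefficients (the paper does this explicitly in \eqref{eq: partial derivatives}--\eqref{eq: partial derivatives_2}), conclude Fr\'echet differentiability, and obtain the adjoint by transposing each block on the predual $L^1$ via integration by parts with no boundary terms --- exactly the paper's appeal to ``the mapping properties of $(\partial_q v)^*$, $(\partial_q \bi{A})^*$''. Two points of comparison are worth recording. First, the paper rewrites the flow term of $v$ in divergence form $-i\omega\nabla\cdot(\bi{u}/c^2)$ using the background mass-conservation constraint \eqref{ass: conservation of mass} before differentiating, so its $\partial_{\bi{u}}v$ is a \emph{first-order} operator in $\partial\bi{u}$ with coefficient in $W^{1,\infty}$, whereas you differentiate the original expression \eqref{eq: v} and get a zeroth-order multiplication operator; the two agree exactly on mass-conserving perturbations $\diver(\rho\,\partial\bi{u})=0$, and your variant in fact supports the asserted $L^1(\Omega_0,\Rd)$ membership of the $\bi{u}$-component of the adjoint more directly than the divergence form does. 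Second, your order bookkeeping is left vague (``at most $2$ for the remaining two'', ``second- and first-order contributions'' without saying which slot gets which): along either route $\partial_c v$ has order one and $\partial_\rho v$ order two, so the adjoint lands in $W^{-1,1}$ in the $c$-slot and $W^{-2,1}$ in the $\rho$-slot; the lemma lists these two spaces in the opposite order, which appears to be a transposition in the statement rather than anything either proof produces, but your write-up should pin the orders down explicitly rather than paper over them.
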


\begin{proof}
    We rephrase the potential $v$ in the form:
\begin{eqnarray*}
    v=k^2-\frac{\omega^2+2i \gamma \omega}{c^2}+\rho^{1/2} \Delta (\rho^{-1/2})-i \omega \nabla \cdot \left(\frac{\bi{u}}{c^2} \right).
\end{eqnarray*}
It follows that
\begin{equation}
\label{eq: partial derivatives}
\begin{split}
    &\fl [\partial_c v] (\partial c)=2 \frac{\omega^2+2 i \omega \gamma}{c^3} \cdot \partial c+ 2 i \omega \nabla \cdot \left( \frac{\bi{u}}{c^3} \partial c \right)=:M_{g_c^0}(\partial c)+\left( M_{\bi{g}_c^1} \circ \nabla \right) (\partial c)\\
    &\fl [\partial_{\gamma} v](\partial \gamma)=-2i \omega M_{\frac{1}{c^2}}(\partial \gamma)=:M_{g_{\gamma}^0}(\partial \gamma) \\
    &\fl [\partial_{\rho} v](\partial \rho)=\left( \frac{1}{2} \rho^{1/2} \Delta \rho^{-3/2}-\frac{1}{2} \rho^{-1/2} \Delta \rho^{-1/2} \right) \cdot \partial \rho-\frac{1}{2} \rho^{-1} \Delta \partial \rho-\rho^{1/2} \nabla \rho^{-3/2} \cdot \nabla \partial \rho\\
    &\fl \hspace{2cm} =:M_{g^0_\rho}(\partial \rho)+\left( M_{\bi{g}_{\rho}^1} \circ \nabla \right) (\partial \rho)+\left( M_{g^2_{\rho}} \circ \Delta \right) (\partial \rho) \\
    &\fl [\partial_{\bi{u}} v](\partial \bi{u})=-i \omega \nabla \left( \frac{\partial \bi{u}}{c^2} \right)=:M_{\bi{g}^0_{\bi{u}}}(\partial \bi{u})+\left( M_{g^1_{\bi{u}}} \circ \nabla \right) (\partial \bi{u}),
\end{split}
\end{equation}
where
\begin{eqnarray*}
    &g_c^0,g_{\gamma}^{0}, g_{\rho}^0 \in L^{\infty}(\Omega_0), \quad \bi{g}_{\bi{u}}^0 \in L^{\infty}(\Omega_0)^d, \quad  g_{\bi{u}}^1 \in W^{1, \infty}(\Omega_0), \\
    &\bi{g}_c^1, \bi{g}_{\rho}^1 \in  W^{1, \infty}(\Omega_0)^d, \quad  g_{\rho}^2 \in W^{2, \infty}(\Omega_0).
\end{eqnarray*}
Furthermore, we have 
\begin{eqnarray}
\label{eq: partial derivatives_2}
    &\partial_c \bi{A}=M_{\frac{-\omega}{c^3} \bi{u}}, \qquad \partial_{\bi{u}} \bi{A}=M_{\frac{\omega}{c^2}}, \qquad \partial_{\gamma} \bi{A}=\partial_{\rho} \bi{A}=0,
\end{eqnarray}
The operator $\mathcal{P}$ is Fr\'echet differentiable with Fr\'echet derivative~\eqref{eq: frechet P} since the terms $\partial_q v, \partial_q \bi{A}$ are well-defined for $q \in \{c, \rho, \gamma, \bi{u} \}$. The claim follows with the mapping properties of $(\partial_q v)^*, (\partial_q \bi{A})^*$.
\end{proof}

In analogy to equation~\eqref{eq: Frechet-derivative_propagator}, we can write the Fr\'echet derivative in the form:

\begin{equation}
\label{eq: Frechet-derivative_propagator_2}
\begin{split}
&(\mathcal{C} \circ \mathcal{P})^{\prime}[c, \rho,\gamma,\bi{u},S](\partial c, \partial \rho, \partial \gamma, \partial\bi{u},\partial S)=\sum_{q \in \{c, \rho, \gamma, \bi{u}, S \}} \Re \left( \mathcal{H}^{v, \bi{A}}_{\alpha_q} \mathcal{L}_q(\partial q) {\mathcal{H}^{v, \bi{A}}_{\beta_q}}^* \right)\\
& (\mathcal{C}\circ \mathcal{P})^{\prime}[c, \rho,\gamma,\bi{u},S]^*\mathcal{D}
= \begin{pmatrix}
\mathcal{L}_c^*\left({\mathcal{H}^{v, \bi{A}}_{\alpha_c}}^* \Re(\mathcal{D}) \mathcal{H}^{v, \bi{A}}_{\beta_c}\right)\\
\mathcal{L}_{\rho}^*\left({\mathcal{H}^{v, \bi{A}}_{\alpha_{\rho}}}^* \Re(\mathcal{D}) \mathcal{H}^{v, \bi{A}}_{\beta_{\rho}}\right)\\
\mathcal{L}_{\gamma}^*\left({\mathcal{H}^{v, \bi{A}}_{\alpha_{\gamma}}}^* \Re(\mathcal{D}) \mathcal{H}^{v, \bi{A}}_{\beta_{\gamma}}\right)\\
\mathcal{L}_{\bi{u}}^*\left({\mathcal{H}^{v, \bi{A}}_{\alpha_{\bi{u}}}}^* \Re(\mathcal{D}) \mathcal{H}^{v, \bi{A}}_{\beta_{\bi{u}}}\right)\\
\mathcal{L}_{S}^*\left({\mathcal{H}^{v, \bi{A}}_{\alpha_{S}}}^* \Re(\mathcal{D}) \mathcal{H}^{v, \bi{A}}_{\beta_{S}}\right)
\end{pmatrix}.
\end{split}    
\end{equation}

The operators $\mathcal{L}_q$ play the role of local correlation operators. The propagators and local correlation operators in the flow-free case can be read in Table~\ref{tab: Backpropagator}.

\begin{table}[hb]
\caption{\label{tab: Backpropagator}Distributional kernel of back-propagator and local correlation operator for the different parameters. The functions $g^0_\rho, g^1_\rho, g^2_\rho$ are defined in equation~\eqref{eq: partial derivatives}. The coordinates are chosen such that $\bi{x} \in \Gamma$ and $\bi{y} \in \Omega$. Here, 
we assume that $\bCov[s_{\partial\Omega}] = M_{B}$ with 
$B\in L^{\infty}(\partial\Omega)$ and 
use the notation $S_{\overline{\Omega}}:= S + B \delta_{\partial \Omega}$.}
\footnotesize\rm
\begin{tabular}{@{}*{7}{l}}
\br                              
Quantity $q$ &Propagator $\mathcal{H}_{\alpha_q}$&Propagator $\mathcal{H}_{\beta_q}$&Local correlation $\mathcal{L}_q^*$\cr 
\mr
Source Strength $S$&$G(\bi{x}, \bi{y})$&$G(\bi{x}, \bi{y})$&$\Diag $\cr
Sound speed $c$&$G(\bi{x}, \bi{y})$&$\int_{\Omega} G(\bi{x}, \bi{z}) S_{\overline{\Omega}}(\bi{z})\overline{G(\bi{y}, \bi{z})}\,\mathrm{d} \bi{z}$&$-2 \frac{\omega^2+2i\omega \gamma}{c^3} \cdot \Diag$ \cr
Density $\rho$&$G(\bi{x}, \bi{y})$&$\int_{\Omega} G(\bi{x}, \bi{z}) S_{\overline{\Omega}}(\bi{z})\overline{G(\bi{y}, \bi{z})}\,\mathrm{d} \bi{z}$& $\left( g^0_\rho-\bi{g}_{\rho}^1 \nabla+g^2_{\rho} \Delta \right) \cdot \Diag$ \cr
Wave damping $\gamma$ &$G(\bi{x}, \bi{y})$&$\int_{\Omega} G(\bi{x}, \bi{z}) S_{\overline{\Omega}}(\bi{z})\overline{G(\bi{y}, \bi{z})} \,\mathrm{d} \bi{z}$&$\frac{2 i \omega}{c^2} \cdot \Diag$ \cr
Flow component $A_i$ &$G(\bi{x}, \bi{y})$&$\mathbf{\hat{e}}_i \cdot \nabla_{\bi{y}} \left( \frac{\int_{\Omega} G(\bi{x}, \bi{z}) S_{\overline{\Omega}}(\bi{z}) \overline{G(\bi{y}, \bi{z})} \,\mathrm{d} \bi{z}}{\rho^{1/2}(\bi{y}) c(\bi{y})}\right)$&$2 i \omega \frac{\rho^{1/2}}{c} \cdot \Diag$ \cr
\br
\end{tabular}
\end{table}

Note that despite the fact that the adjoint with respect to the standard $L^2$ dual pairings takes values in negative Sobolev spaces, it is usually not necessary to deal with such functions (or distributions) numerically in iterative regularization methods. For instance, in Landweber iteration in Banach spaces, the application of the adjoint is followed by the application of a 
duality mapping which takes values in positive Banach spaces. For Hilbert space methods, 
one would choose a $L^2$-based Sobolev space $W^{s,2}$ with sufficiently large $s$ and 
compute the adjoint with respect to the $W^{s,2}$ inner product, which amounts to an 
evaluation of the adjoint of the embedding $W^{s,2}\hookrightarrow L^2$.

\subsection{Source model in helioseismology}
\label{sec: seismic sources}
It remains to discuss the seismic source model in helioseismology. 
It has been shown in several settings that the cross-correlation is roughly linked to the imaginary component of the outgoing Green's function (\cite{Garnier2016}). In helioseismology, this relation takes the form (\cite{Gizon2017})
\begin{eqnarray} \label{eq: imaginary_basic}
    C(\bi{r}_1, \bi{r}_2, \omega)=\frac{\Pi(\omega)}{4 i \omega} \left( G_{v, \bi{A}}(\bi{r}_1, \bi{r}_2, \omega)-\overline{G_{v, -\bi{A}}(\bi{r}_1, \bi{r}_2, \omega)} \right),
\end{eqnarray}
where $\Pi(\omega)$ is the source power spectrum. This relation leads to a power spectrum in good agreement with the observations \cite{Gizon2017}. 
As outlined in \cite{Gizon2017}, Equation~\eqref{eq: imaginary_basic} holds true for an outgoing radiation condition and random sources that are appropriately excited across the volume in proportion to the damping rate
\begin{eqnarray}
    \label{eq: source covariance}
    (\bCov[s] \phi)(\bi{r}, \omega)=\Pi(\omega) \frac{\gamma(\bi{r}, \omega)}{c_0^2(\bi{r})} \phi(\bi{r}, \omega).
\end{eqnarray}
Moreover, there are surface integrals that persist for frequencies above the acoustic cutoff frequency, and these are dependent on the chosen boundary condition.

 The relationship between source power and damping rate emerges from the idea of equipartition among distinct acoustic modes (\cite{Snieder2007}). This choice of covariance couples the source strength with wave attenuation and sound speed. Nevertheless, we consider the source strength as an additional individual parameter. This source model is included in the discussion of the previous sections. In helioseismology, the relation~\eqref{eq: imaginary_basic} is the standard choice to reduce the computational costs of the operator evaluation. Furthermore, it allows us to evaluate the back-propagator in Table~\ref{tab: Backpropagator} efficiently.

\section{Iterative helioseismic holography}
\label{sec: applications holography}
In this section, we discuss the application of the approach outlined in Section~\ref{sec: Algorithm} to local helioseismology. We first show that it can be interpreted as an extension of conventional helioseismic holography. 
For this reason, we will refer to this approach as iterative helioseismic holography. We also discuss relations to other methods in local helioseismology. 

In a second subsection, we will describe sensitivity kernels for 
the normal equation as introduced in \eqref{eq:defi_sensitivity_kernel} 
for the following three scenarios: 
\begin{enumerate}
    \item Inversion for the source strength,
    \item Inversion for scalar parameter $q \in \{\rho, c, \gamma\}$,
    \item Inversion for mass-conserved flow field $\bi{u}$.
\end{enumerate} 

\subsection{Relations to conventional helioseismic holography and other methods}
\label{sec: iterative helioseismic holography}

 Conventional helioseismic holography is based on the Huygens principle in the sense that the observed wavefield is described as a superposition of seismic point sources on the wavefront. This principle allows holography to propagate the correlations of acoustic waves at the solar surface forward in time ("ingression" using $\mathcal{H}_{\beta}$) or backward in time ("egression" using $\mathcal{H}_{\alpha}$) to a pre-defined target location in the interior in order to image anomalies in the background medium (e.g. \cite{LB90}). There exists a close connection to seismic migration in terrestrial seismology, which re-locates seismic events on the earth's surface in time and space, based on the wave equation (e.g. \cite{Hagedoorn1954}, \cite{Claerbout1985}). Furthermore, similar back-propagators are used in conventional beamforming in aeroacoustics (\cite{Garnier2016}, \cite{Hohage2020}).

The Lindsey-Braun holographic image (see \cite{LB2000}) is constructed by the wave propagators 
$\mathcal{H}_{\alpha} \in L\left(H_0^{-1}(\Omega), L^2(\Gamma_1)\right)$ 
and $\mathcal{H}_{\beta}\in L\left(H_0^{-1}(\Omega), L^2(\Gamma_2)\right)$ 
such that 
\begin{equation*}
    \phi_{\alpha}(\bi{x})=(\mathcal{H}_{\alpha}^*\psi)(\bi{x})
    =\int_{\Gamma_1} H_{\alpha}(\bi{x}, \bi{r}) \psi(\bi{r}) \,\mathrm{d} \bi{r}, \qquad 
    \phi_{\beta}(\bi{x})=(\mathcal{H}_{\beta}^*\psi)(\bi{x})=\int_{\Gamma_2} H_{\beta}(\bi{x}, \bi{r}) \psi(\bi{r}) \,\mathrm{d} \bi{r},
\end{equation*}
where $\Gamma_1, \Gamma_2 \subset \Gamma$ are called pupils.
In Lindsey-Braun holography the information is extracted from the so-called egression-ingression correlation for parameters $q \in \{c, \rho, \bi{u}, \gamma\}$ and the egression power for seismic sources 
\begin{eqnarray}
    \label{eq: hologramintensity}
    \fl I_{\alpha, \beta} (\bi{x})=\frac{1}{N} \sum_{n=1}^N \phi^n_{\alpha}(\bi{x}) \overline{\phi^n_{\beta}(\bi{x})} =
    \frac{1}{N} \sum_{n=1}^N \Diag \left( \mathcal{H}^*_{\alpha} \psi_n \otimes \mathcal{H}^*_{\beta} \psi_n \right)(\bi{x}), \\
    \label{eq: hologramintensity_2}
    \fl \mathbb{E}_{\bi{q}} \left[ I_{\alpha, \beta} (\bi{x}) \right]=\int_{\Gamma} \int_{\Gamma} \overline{H_{\alpha} (\bi{r}, \bi{x})} C_{v,\bi{A}, S}(\bi{r}, \bi{r}_1) H_{\beta} (\bi{r}_1, \bi{x}) \,\mathrm{d} \bi{r} \,\mathrm{d} \bi{r}_1=\Diag \left( \mathcal{H}^*_{\alpha} 
    \mathcal{C}[\bi{q}] \mathcal{H}_{\beta} \right),
\end{eqnarray}
where  $\otimes$ the standard tensor product, and $C_{v,\bi{A}, S}$ is from equation~\eqref{eq: Covariance}. 

The comparison of equations~\eref{eq: Frechet-derivative_propagator} and \eref{eq: hologramintensity} shows that the adjoint of the Fr\'echet derivative of the covariance operator is linked to traditional helioseismic holography. Denoting potential additional dependence 
of $I_{\alpha,\beta}$ on the unknown parameters $\bi{q}$ through $\mathcal{H}_{\alpha}$ and $\mathcal{H}_{\beta}$ 
by superscripts, in terms of conventional holography the Newton step~\eref{eq: Iteration} is a regularized solution to
\begin{eqnarray*}
    I_{\alpha, \beta}^{\bi{q}_n}-\mathbb{E}_{\bi{q}} \left[ I_{\alpha, \beta}^{\bi{q}_n} \right] =\int_{\Omega_0} \bi{K}_{\alpha, \beta}^{\bi{q}_n}(\cdot,\bi{y})\,(\delta \bi{q}_n)(\bi{y})\,\mathrm{d} \bi{y},
\end{eqnarray*}
where $\bi{K}_{\alpha, \beta}^{\bi{q}}$ are the sensitivity kernel of traditional holography, see \eqref{eq:forwardbackward}. 
We will discuss the sensitivity kernels in more detail in Section~\ref{sec: sensitivity_Kernel}.

In traditional helioseismic holography, one has freedom in the choice of the pupils and back-propagators.
For example, the pupils can be chosen such that the hologram intensity becomes sensitive to specific flow components \cite{Yang2018b}. 
As a further example, Porter-Bojarski holograms, introduced to the field of helioseismology in \cite{Skartlien2001, Skartlien2002}, make use of the normal derivative at the surface in addition to the Dirichlet data.
In contrast, the backward propagators in iterative helioseismic holography are determined by the wave equation, and the image is improved by iteration.

While many techniques in helioseismology including traditional helioseismic holography are limited to linear scenarios, iterative holography naturally allows to tackle nonlinear problems. 

Among the commonly used imaging techniques in local helioseismology, holography is the only method that uses the complete cross-correlation data. 
As already discussed in Section~\ref{sec: Avoid Corr}, these data 
are used only in an implicit manner without the (usually infeasible) 
requirement of computing or storing the cross-correlation data explicitly. 
Whereas traditional helioseismic holography only provides feature maps 
 \cite{LB97}, iterative helioseismic holography additionally allows to retrieve quantitative information.


\subsection{Kernels and resolution}
\label{sec: sensitivity_Kernel}

In the following, we compute the sensitivity kernels for the normal equation as defined in eq.~\eqref{eq:defi_sensitivity_kernel} which are set up explicitly in our current implementation. 
It is important to note that sensitivity kernels are typically 3D $\times$ 3D operators and should be avoided in computations. Nevertheless, in the spherically symmetric case or two-dimensional medium, computation becomes feasible. Therefore, for the purpose of this paper, we can compute the sensitivity kernels in each iteration and do not study more sophisticated approaches.
These kernels are infinitely smooth for smooth coefficients, but they are well localized. It turns out that the width of these kernels is of the order of the classical resolution limit of half a wavelength. 
This provides an upper bound on the achievable resolution. For simplicity, we will assume a spherically symmetric background without a flow field.

\begin{enumerate}
    \item 
\emph{Inversion for source strength:} It follows from Theorem~\ref{the: HS-operator} that  
\begin{eqnarray*}
    \partial_S C[v, \bi{A}, S](\partial S)=H_{\alpha_S} \mathcal{M}(\partial S) H^*_{\beta_S},
\end{eqnarray*}
where the multiplication operator is defined in Lemma~\ref{lem:MS}, so
\begin{eqnarray*}
    (\partial_S C[v, \bi{A}, S])^{*} \partial_S C[v, \bi{A}, S](\partial S)=\Diag(\mathcal{F}_{\alpha_S, \alpha_S} \mathcal{M}(\partial S) \mathcal{F}_{\beta_S, \beta_S})
\end{eqnarray*}
with the sensitivity kernel 
\begin{eqnarray*}
    K(\bi{x},\bi{y})=\Re \left[ F_{\alpha_S, \alpha_S}(\bi{x}, \bi{y}) F_{\beta_S, \beta_S}(\bi{y}, \bi{x}) \right].
\end{eqnarray*}
The real part comes from the fact that the source strength has to be a real parameter, it is the adjoint of the embedding of 
a vector space of real-valued functions into the corresponding 
vector space of complex-valued functions.
The source forward-backward kernel takes the form:
\begin{eqnarray*}
F_{\alpha_S, \alpha_S}(\bi{x}, \bi{y})=F_{\beta_S, \beta_S}(\bi{y}, \bi{x})=\int_{\Gamma} \int_{\Gamma} \overline{G(\bi{z}, \bi{x})} D_n(\bi{z}, \bi{z}^{\prime}) G(\bi{z}^{\prime}, \bi{y}) \,\mathrm{d} \bi{z} \,\mathrm{d} \bi{z}^{\prime},
\end{eqnarray*}
where $D_n$ is the integral kernel of $\Gamma_n$ in \eqref{eq:defi_Gamma_n}.  
The sensitivity kernel becomes $|K_{\alpha_S, \alpha_S}|^2$ and is therefore non-negative. 
Furthermore, there are almost no sidelobes after averaging over frequency.

\item \emph{Inversion for scalar parameters $q\in\{\rho,c,\gamma\}$:}
The operators $\partial_q v$ and $\partial_q \bi{A}$ for 
 $q \in \{\rho, c, \gamma\}$ are computed in equations \eqref{eq: partial derivatives} and \eqref{eq: partial derivatives_2}. For a flow-free background medium, we have $\partial_q \bi{A}=0$ for all scalar parameters $q$. It follows from Theorem~\ref{the: HS-operator} that
\begin{eqnarray*}
    \partial_q C[v, \bi{A}, S](\partial q)=-2 \Re \left[H_{\alpha_v} \mathcal{M}(\partial_q v \partial q) H^*_{\beta_v} \right], 
\end{eqnarray*}
and hence 
\begin{eqnarray*}
    &\fl (\partial_q C[v, \bi{A}, S])^{*} \partial_q C[v, \bi{A}, S](\partial q)\\
    &=2 (\partial_q v)^* \Diag \left[ \mathcal{F}_{\alpha_v, \alpha_v} \mathcal{M}(\partial_q v \partial q) \mathcal{F}_{\beta_v, \beta_v}+ \mathcal{F}_{\alpha_v, \beta_v} \mathcal{M}((\partial_q v \partial q)^*) \mathcal{F}_{\alpha_v, \beta_v} \right],
\end{eqnarray*}
where $F_{\alpha_{v}, \alpha_{v}}=\int_{\Gamma} \int_{\Gamma} \overline{H_{\alpha_{v}}(\bi{z}, \bi{x})} D_n(\bi{z}, \bi{z}^{\prime}) H_{\alpha_{v}}(\bi{z}^{\prime}, \bi{y}) \,\mathrm{d} \bi{z} \,\mathrm{d} \bi{z}^{\prime}$ and analogue for $F_{\alpha_v, \alpha_v}, F_{\beta_v, \beta_v}$.
In particular, for $q \in \{c, \gamma\}$, we have $\partial_q v=\mathcal{M}(g_q^0)$,  and the kernel takes the form
\begin{equation}\label{eq:kernel_scalar}
\begin{aligned}
     K(\bi{x}, \bi{y})&=2 \Re \left[g_q^{0*} (\bi{x}) F_{\alpha_v, \alpha_v}(\bi{x}, \bi{y}) F_{\beta_v, \beta_v}(\bi{y}, \bi{x}) g_q^0 (\bi{y}) \right] \\
    \fl &+ 2 \Re \left[g_q^{0*} (\bi{x}) F_{\alpha_v, \beta_v}(\bi{x}, \bi{y}) F_{\alpha_v, \beta_v}(\bi{y}, \bi{x}) g_q^{0*}(\bi{y}) \right].
    \end{aligned}
\end{equation}

\item \emph{Inversion for mass-conserved flow field $\bi{u}$:}
The flow field sensitivity kernel takes the form 
\begin{eqnarray*}
    \fl K^{i, j} (\bi{x}, \bi{y})=2 \Re  \left[ F_{\alpha_{\bi{u}}, \alpha_{\bi{u}}}(\bi{x}, \bi{y}) F^{i, j}_{\beta_{\bi{u}}, \beta_{\bi{u}}}(\bi{y}, \bi{x})+ F^{j}_{\alpha_{\bi{u}}, \beta_{\bi{u}}}(\bi{x}, \bi{y}) F^i_{\alpha_{\bi{u}}, \beta_{\bi{u}}}(\bi{y}, \bi{x}) \right],
\end{eqnarray*}
for $i, j \in \{r, \theta, \phi\}$ 
where $F_{\alpha_{\bi{u}}, \alpha_{\bi{u}}}=\int_{\Gamma} \int_{\Gamma} \overline{H_{\alpha_{\bi{u}}}(\bi{z}, \bi{x})} D_n(\bi{z}, \bi{z}^{\prime}) H_{\alpha_{\bi{u}}}(\bi{z}^{\prime}, \bi{y}) \,\mathrm{d} \bi{z} \,\mathrm{d} \bi{z}^{\prime}$ and analogue the further kernels.
\end{enumerate}

It is remarkable that we are encountering a gradient in the local correlation $(\partial_{\bi{u}} L_{\bi{u}})^{*}$. In Chapter~4 of \cite{Yang2018b}, enhancements were accomplished by calculating the difference between two holograms which are spaced apart by half the local wavelength. This can be understood as an approximation to the gradient in the target direction and is therefore naturally incorporated in our framework. 

\begin{figure}
   \centering
   \includegraphics[width=\hsize]{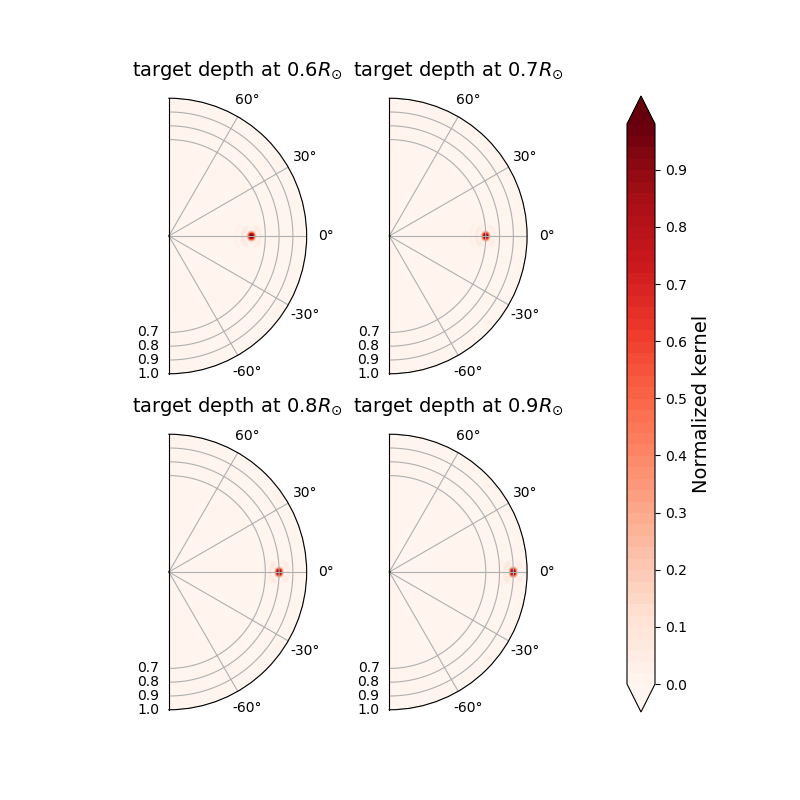}
      \caption{The sound speed sensitivity kernel $K(\bi{x},\cdot)$ in the $r-\theta$-plane as defined in \eqref{eq:kernel_scalar}
      for a three-dimensional uniform medium with $c_0=200$\,km/s and the $l$-range is $0 \leq l<100$ for four different target positions $\bi{x}$. We have averaged the sensitivity kernels over 100 frequencies in the frequency regime $2.75$--$3.25$~mHz and normalized with $K(\bi{x}, \bi{x})$ at the target location $\bi{x}$.
              }
         \label{fig: sound speed kernel_Uniform}
   \end{figure}

\begin{figure}
   \centering
   \includegraphics[width=\hsize]{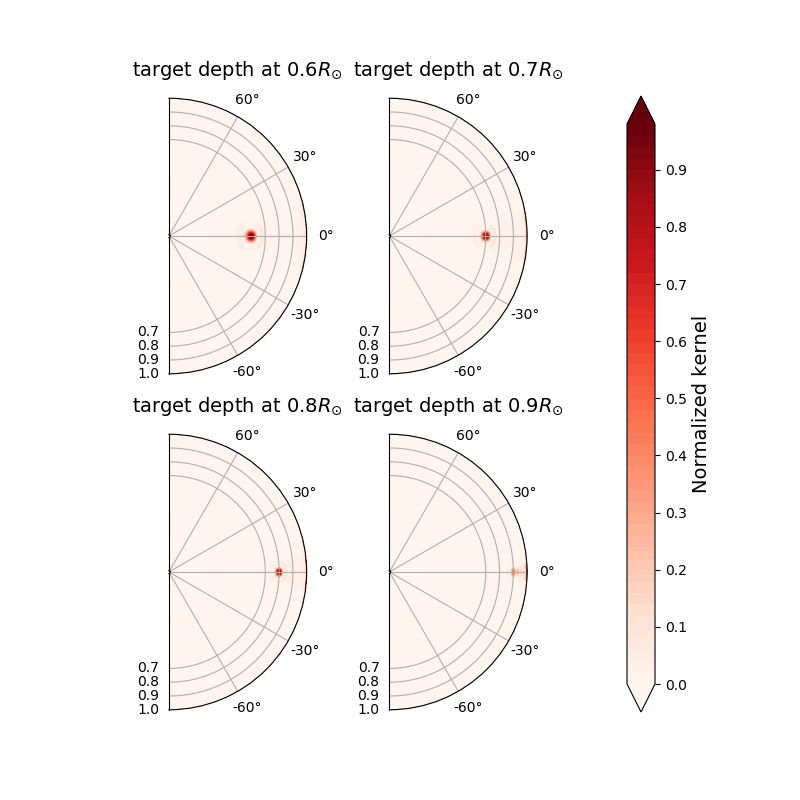}
      \caption{The sound speed sensitivity kernel $K(\bi{x},\cdot)$ in the $r-\theta$-plane as defined in \eqref{eq:kernel_scalar} in a spherically stratified solar-like background medium and 
      spherical harmonics degrees $0 \leq l<100$ for four  different target positions. We have averaged the sensitivity kernels over 100 frequencies in the frequency regime $2.75$--$3.25$~mHz and normalized with $K(\bi{x}, \bi{x})$ at the target location $\bi{x}$. For better comparisons, we have multiplied the sensitivity kernels with the sound speed.
              }
         \label{fig: sound speed kernel_Sun}
   \end{figure}
   
\begin{figure}
   \centering
   \includegraphics[width=\hsize]{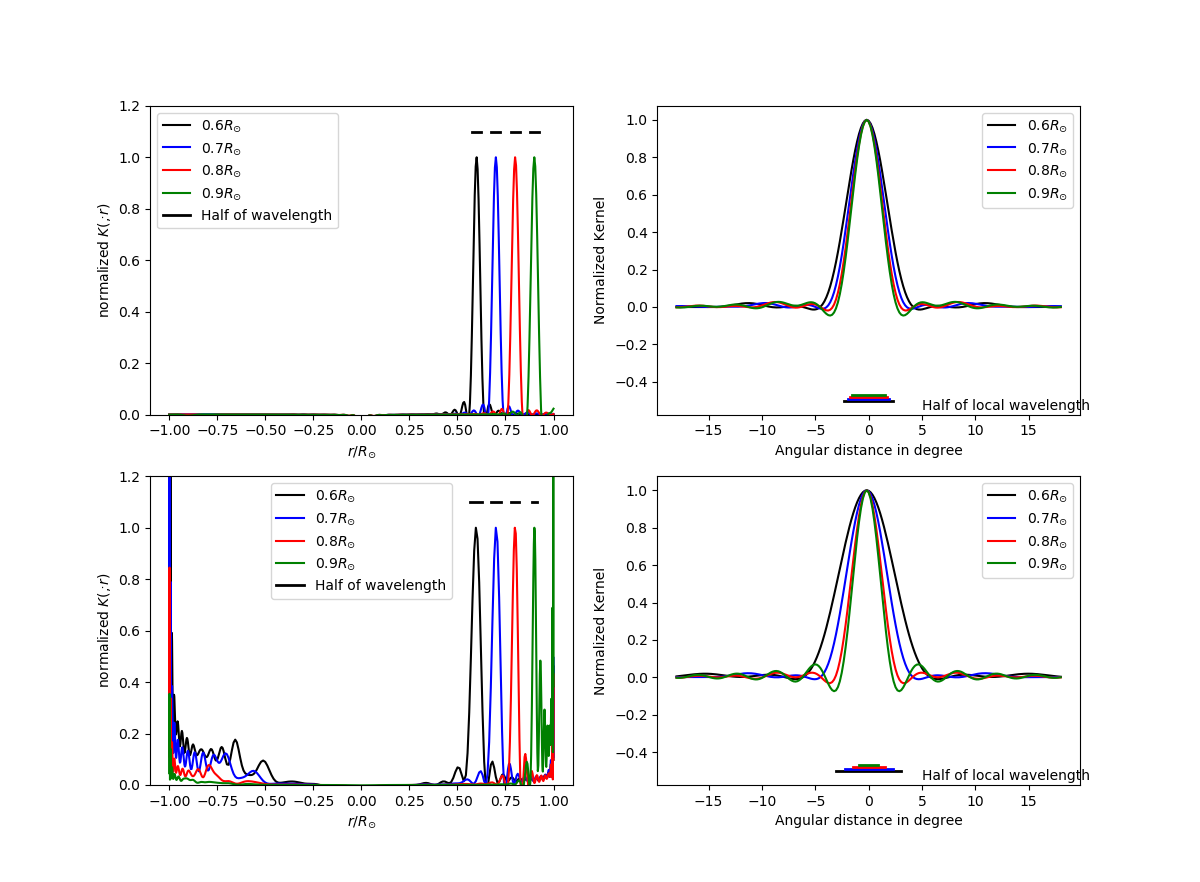}
      \caption{The sound speed sensitivity kernels in a spherically stratified background medium and the $l$ range is $0 \leq l<100$. In the top panels we present the kernels for a uniform medium with $c_0=200$\,km/s (as in Fig.~\ref{fig: sound speed kernel_Uniform}), 
      and in the second line the kernels for a solar-like 
      medium (as in Fig.~\ref{fig: sound speed kernel_Sun}). 
      In the first column, we show the kernels in the radial direction, and in the second column the kernels in the angular direction. We have averaged the sensitivity kernels over 100 frequencies in the frequency regime $2.75$--$3.25$~mHz. Furthermore, we compare the width of the sensitivity kernels to the classical resolution limit of $\lambda/2$. For better comparisons, we have multiplied the sensitivity kernels with the sound speed.}
     \label{fig: sound speed kernel}
\end{figure}
   
In Figures~\ref{fig: sound speed kernel_Uniform} and \ref{fig: sound speed kernel_Sun}, we present the sound speed kernel for a uniform and a solar-like radially stratified medium for four different target positions. The kernels are computed for spherical harmonic degrees $0 \leq l<100$ and averaged over 100 evenly spaced frequencies between $2.75$--$3.25$~mHz. Since there are no strong ghost images on the backside, we show only half of the geometry. The sound speed kernels are very sharp near the target location. Therefore, we can expect the holograms to catch the main features of the image. It is important to highlight that the kernels maintain their sharpness even in deep regions within the interior. In addition, this result holds true for a radial stratification similar to that of the Sun. We observe similar behavior for the sensitivity kernels for wave damping, density, source strength, and the components of the flow field. Similar to the sensitivity kernels for the source strength, it is important to note that there are only small visible sidelobes in the sensitivity kernels for sound speed perturbations. This is an additional advantage compared to traditional techniques used in helioseismology.

Figure~\ref{fig: sound speed kernel} provides a comparison of the width of the sensitivity kernels for the normal equation and the local half wavelength $\lambda/2$. Note that both are of similar size in all cases, and similar results hold true in angular direction. Therefore, we can expect a resolution of (at least) $\lambda/2$. However, in the case of a solar-like stratification, the sensitivity kernels are increasing close to the solar surface.

In helioseismology, and particularly in helioseismic holography, a common issue is the indistinguishability of various sources of perturbations, which complicates the interpretation of seismic data. The design of a holographic back-propagator holds the promise of separating different perturbations. In Figure~\ref{fig: sound damping kernel}, we present the sensitivity kernels for a perturbation in sound speed, a perturbation in damping, and the cross-kernel in a uniform two-dimensional medium. We show the kernels in a region around the target location.
Note that the sound speed kernel is on one scale bigger than the damping kernel and the cross-kernel. Furthermore, the cross-kernel exhibits a different shape with positive and negative maxima around the target location. Therefore, we expect that iterative holography can separate different perturbations in the background medium.

\begin{figure}
   \centering
   \includegraphics[width=\hsize]{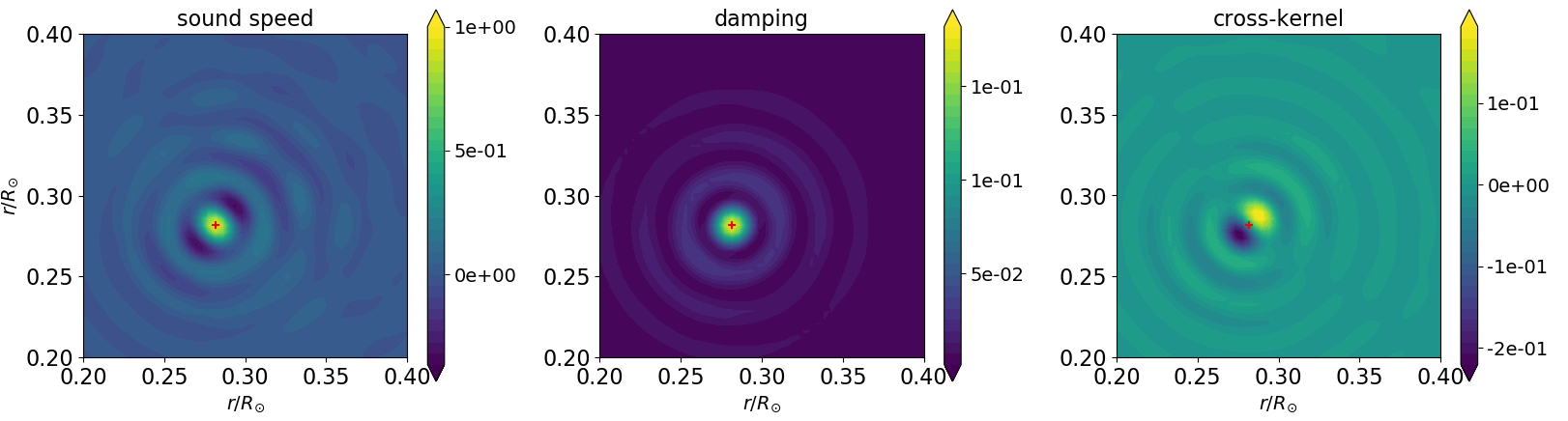}
      \caption{Matrix-valued sensitivity kernel 
      for joint inversion for sound speed $c$  and damping $\gamma$. The left two panels 
      exhibit the diagonal entries, and the right panel the cross-kernel in a uniform two-dimensional medium in a rectangular box of $[0.2\,R_{\odot}, 0.4\,R_{\odot}]^2$. The target location is indexed by a red cross. The kernels are normalized by the maximal value of the sound-speed kernel.}
     \label{fig: sound damping kernel}
\end{figure}

\section{Inversions}
\label{sec: Inversions}
In this section, we analyze the performance of iterative holography. The geometry is meshed with a resolution of 10 internal points per local wavelength. Furthermore, we impose a Sommerfeld boundary condition throughout the inversions.

Throughout the following inversions, we employ a $L^2$-term as the penalty term and introduce a non-negativity constraint for both sound speed and source strength. The regularization parameter is determined by a power law: $\alpha_n=\alpha_0 \cdot 0.9^n$, where $\alpha_0$ represents the maximal eigenvalue of the first iteration. The stopping criterion for the inversions is a version of the discrepancy principle for the 
normal equation, with the noise level determined by the trace of the covariance operator $\mathcal{C}_4$. In more advanced inversions, stopping rules may be investigated in the hologram space. We set a limit of at most 50 inner conjugate gradient steps per Newton step. 
Furthermore, we opt for a spatial resolution of 7 grid points per local wavelength. 

\subsection{Holographic image for source perturbation}
We have performed a numerical test for a uniform, flow-free two-dimensional medium with source region $[0.5, 0.7]^2$ and 100 uniformly sampled receivers located on $\partial B(0,1)$ (see Figure~\ref{fig: holography not quatitative}). We choose a constant sound speed $c=350$~km/s, which corresponds to the solar sound speed at $\approx 0.38 R_{\odot}$. The frequency is fixed to be $\omega/2 \pi=3~$mHz, which corresponds to the solar 5-minute oscillations. In the case of uniform medium, the differential equation simplifies to a Helmholtz equation, such that the Green's function is analytically known (see \ref{sec: Uniform Green}). Note that Lindsey-Braun holography ($\mathcal{H}_{\alpha}=\mathcal{H}_{\beta}=\mathcal{G}$) provides sharp feature maps in the case of small wave damping. 
For stronger wave damping, the quality of these feature maps deteriorates rapidly. Even without damping, the feature maps are not quantitative at all. 
\begin{figure}
\centering
\includegraphics[width=\hsize]{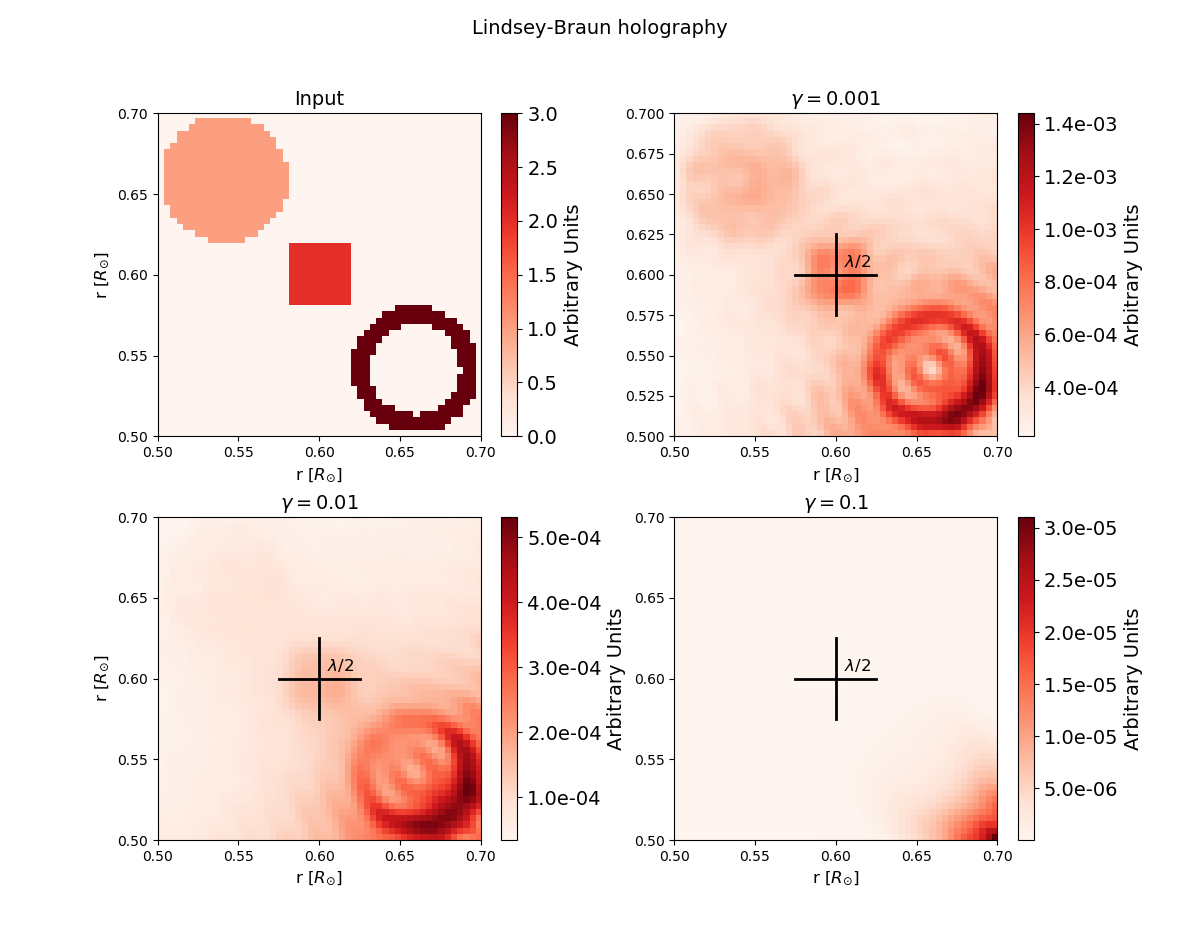}
  \caption{Lindsey-Braun holographic image intensities in uniform two-dimensional medium (Helmholtz equation) for different degrees of damping
  with 100 equidistant receivers on $\partial B(0, 1)$. The wave number is such that $k^2=\omega^2(1+i \gamma)/c^2$ with constant sound speed $c=350$~km/s and $\omega/2 \pi=3$~mHz corresponding to a wavelength of $\approx 0.17\,R_{\odot}$. Note the different scalings of the color maps illustrating the non-quantitative nature of Lindsey-Braun holography.}
     \label{fig: holography not quatitative}
\end{figure}

\subsection{Source strength inversion}
Due to its linear nature, inversion for source strength is the simplest 
case. Therefore, it is in general possible to work with a much finer grid than in the case of parameter identification problems. We add a strong perturbation in the source region $[0, 0.5\,R_{\odot}]^2$. The inversion results at 3~mHz are shown in the first row of Figure~\ref{fig: Inversions_source_sound} for 10000 realizations. 
Note that even very deep source terms can be inverted using only one frequency. 
The reconstructions exhibit a remarkable quality, strongly improving the results by traditional Lindsey-Braun holography (see Figure~\ref{fig: holography not quatitative}).

\begin{figure}
\centering
\includegraphics[width=\hsize]{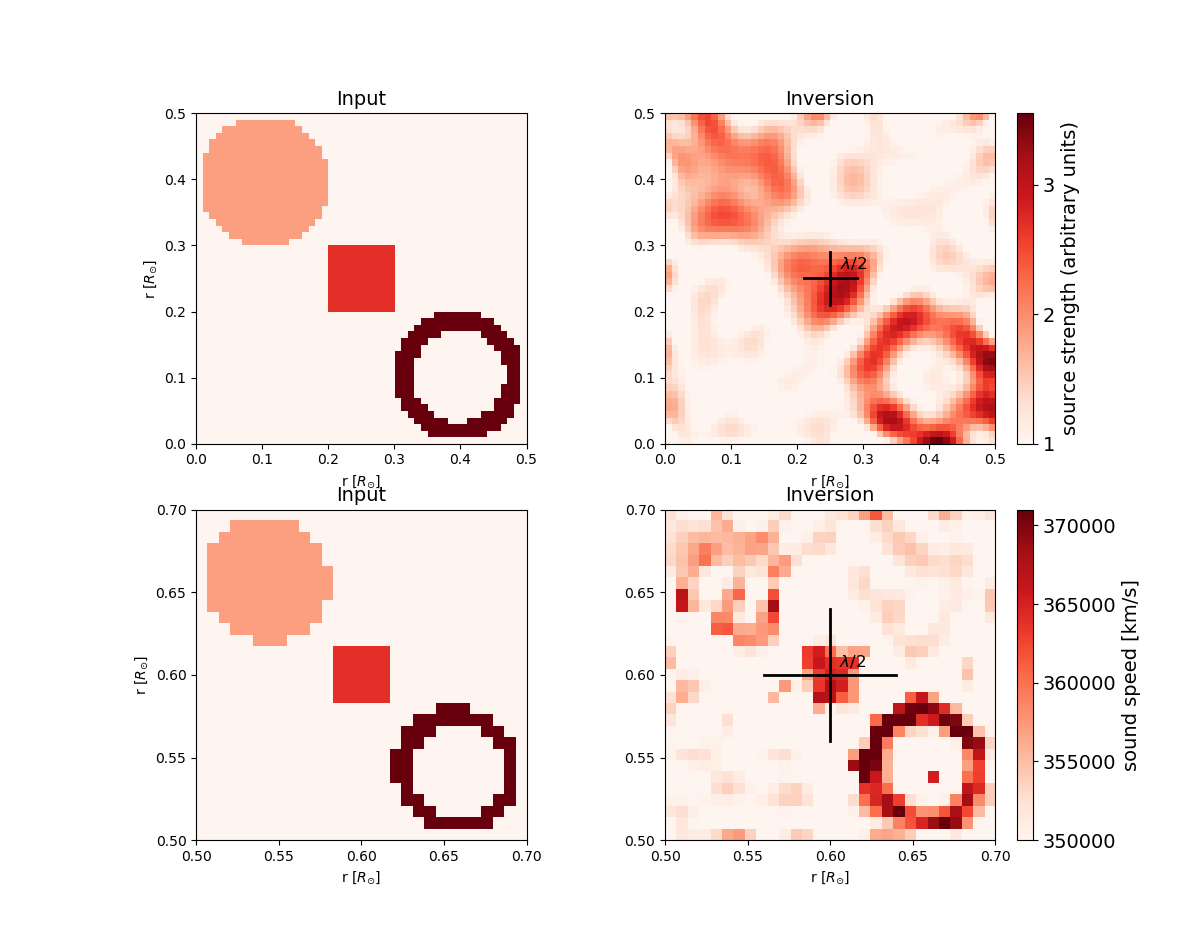}
  \caption{Inversions in two-dimensional uniform background with sound speed $c=350$\,km/s. In the first row, we present inversions for the source strength at 3~mHz and 10000 realizations. In the second row, we present inversions for the sound speed for 100 frequencies evenly spaced in the band $2.75$--$3.25$~mHz and 1000 realizations. The black cross indicates $\lambda/2$.}
     \label{fig: Inversions_source_sound}
\end{figure}

\subsection{Parameter identification}
We add a perturbation in the quadratic region $[0.5R_{\odot}, 0.7 R_{\odot}]^2$.
Furthermore, we choose 100 evenly spaced frequencies in the frequency range of $2.75-3.25$~mHz and assume 1000 realizations for each frequency. Note that in helioseismology we have many more frequencies available.

The inversions are shown in the second row of Figure~\ref{fig: Inversions_source_sound}.  The Newton iteration was stopped after 15 iterations. The resolution of the reconstruction is again below 
the classical limit of half a wavelength. 
We observed qualitatively similar results in the inversions for wave damping and density.

The total number of Dopplergrams is given by $N_{\omega} \times N_{\text{obs}}$, where $N_{\omega}$ is the number of frequencies and $N_{\text{obs}}$ the number of realizations for each frequency. Note that the total size of Doppler data is fixed by the observation time. 
We observe that a larger number of frequencies leads to better reconstructions. On the other hand, the computational costs scale roughly linearly with the number of frequencies. 
This becomes particularly important for large-scale forward problems like for the Sun. Therefore, the choice of $N_{\omega}$ often is a trade-off between quality of reconstructions and computation time.

\subsection{Flow fields}
\label{sec: flow fields}

\begin{figure}
   \centering
   \includegraphics[width=\hsize]{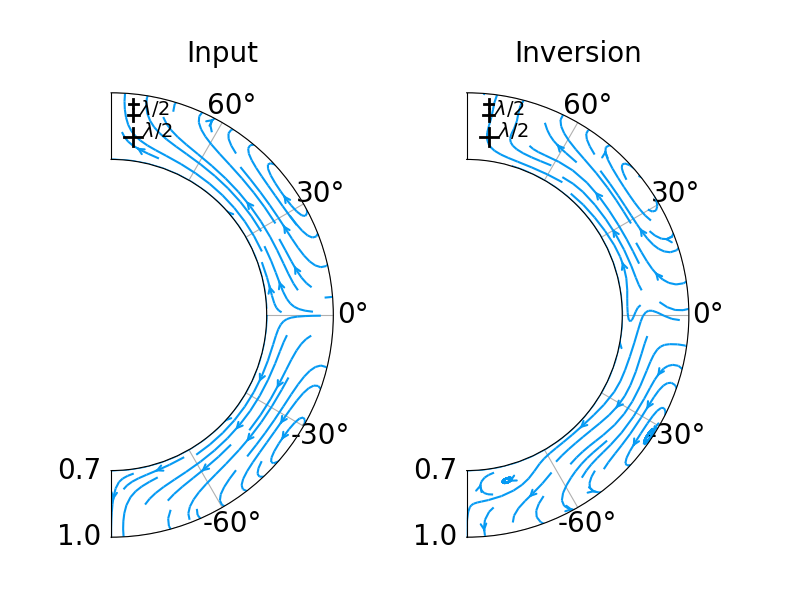}
      \caption{Inversion for the flow field in solar-like three-dimensional background medium in the $r-\theta$-plane. The inversion is performed with 100 evenly spaced frequencies between $2.75$--$3.25$~mHz and $1500$ realizations. Due to the symmetry, we show only one half-space of the flow field.
              }
         \label{fig: flow field inversion}
   \end{figure}

The inversion is performed in a solar-like three-dimensional medium.
The example flow field is computed by $\bi{u}=\operatorname{curl} \psi$, where $\psi$ is a stream function. This guarantees conservation of mass and axisymmetry of the flow field. The stream function is chosen similar to models of meridional circulation profiles in the Sun \cite{Liang2018}. In the inversion process, we guarantee conservation of mass through Lagrange multipliers, as discussed in \ref{sec: conservation of mass}. The inversion for a symmetric flow field is presented in Figure~\ref{fig: flow field inversion}. We inverted with 100 evenly spaced frequencies between $2.75$-$3.25$~mHz and assumed $1500$ realizations for each frequency.  
Since meridional flows are a small perturbation, the iteration is stopped after one iteration.
Because of the symmetry, we show only one half-space of the flow field. Besides the strength of the flow field at larger depths, there is no difference visible in the eye-norm.

\section{Conclusions}
\label{sec: conclusions}
We have developed a theoretical framework for quantitative passive imaging problems in helioseismology. It shows that traditional holography can be interpreted as an adjoint imaging method. Holographic back-propagation can be seen as part of the adjoint of the Fr\'echet derivative of the 
forward operator mapping physical parameters to the 
covariance operator of the observations. In contrast to traditional holography, the backward propagators are uniquely determined by the wave equation, and the holograms can be improved by iteration rather than clever choices of back-propagators. Iterative helioseismic holography surpasses traditional helioseismic techniques by the quantitative nature of its imaging capabilities and its ability to solve nonlinear problems.

We have demonstrated the performance of iterated holography 
in inversions for the right hand side of wave equation
(source strength), 
parameters of the zeroth order term (sound speed, absorption) 
and of the first order term (flows). 
In all three cases, we have achieved reconstructions 
with a resolution of slightly less than 
half of the local wave-length by the iteratively regularized 
Gauss-Newton method, even for strong realization noise. 
This is well below the spatial resolution of traditional time-distance helioseismology (see \cite{Pourabdian2018}).

 Inversions in other more challenging solar setups and for real solar oscillation data are planned as future work and will be presented elsewhere. 

In view of the huge size of solar oscillation data, 
the main bottleneck that prevents the 
immediate application of iterative holography to 
interesting large-scale problems in helioseismology 
is computational complexity. 
The results of this paper encourage further algorithmic research 
on 
iterative regularization methods tailored to passive imaging problems, e.g., by more efficient 
treatments of sensitivity kernels and Green's functions. 

An interesting feature of correlations of Gaussian fields
is the structure of the realization noise as described in Section \ref{sec: Noise model}. A thorough mathematical treatment will 
require further investigation concerning appropriate stopping rules, 
consistency, and convergence rates as the sample size tends to infinity. 

\section*{Acknowledgments}
      This work was supported by the International Max Planck Research School (IMPRS) for Solar System Science at the University of Göttingen. The authors acknowledge partial support from Deutsche Forschungsgemeinschaft (DFG, German Research Foundation) through SFB 1456/432680300 Mathematics of Experiment, project C04.

\section*{References}
\typeout{}
\bibliography{Quantitative_imaging}

\providecommand{\newblock}{}
\begin{thebibliography}{10}
\expandafter\ifx\csname url\endcsname\relax
  \def\url#1{{\tt #1}}\fi
\expandafter\ifx\csname urlprefix\endcsname\relax\def\urlprefix{URL }\fi
\providecommand{\eprint}[2][]{\url{#2}}

\bibitem{Duvall1993}
{Duvall} T~L J, {Jefferies} S~M, {Harvey} J~W and {Pomerantz} M~A 1993 {\em
  {\nat}\/} {\bf \href{https://doi.org/10.1038/362430a0}{362}}
  \href{https://doi.org/10.1038/362430a0}{430--432}

\bibitem{TLHP:10}
Tromp J, Luo Y, Hanasoge S and Peter D 2010 {\em \GJI\/} {\bf
  \href{https://onlinelibrary.wiley.com/doi/abs/10.1111/j.1365-246X.2010.04721.x}{183}}
  \href{https://onlinelibrary.wiley.com/doi/abs/10.1111/j.1365--246X.2010.04721.x}{791--819}

\bibitem{Burov2008}
Burov V~A, Sergeev S~N and Shurup A~S 2008 {\em {\it Acoustical Physics}\/}
  {\bf \href{https://doi.org/10.1134/S1063771008010077}{54}}
  \href{https://doi.org/10.1134/S1063771008010077}{42--51}

\bibitem{WL:01}
Weaver R~L and Lobkis O~I 2001 {\em {\it Physical Review Letters}\/} {\bf
  \href{https://link.aps.org/doi/10.1103/PhysRevLett.87.134301}{87}}
  \href{https://link.aps.org/doi/10.1103/PhysRevLett.87.134301}{134301}

\bibitem{Garnier2014}
Garnier J and Papanicolaou G 2009 {\em \SIAMJIS\/} {\bf \href{
  https://doi.org/10.1137/080723454}{2}} \href{
  https://doi.org/10.1137/080723454}{396--437}

\bibitem{HLOS:18}
Helin T, Lassas M, Oksanen L and Saksala T 2018 {\em {\it Journal de
  Math{\'e}matiques Pures et Appliqu{\'e}es}\/} {\bf
  \href{https://doi.org/10.1016/j.matpur.2018.05.001}{116}}
  \href{https://doi.org/10.1016/j.matpur.2018.05.001}{132--160}

\bibitem{Agaltsov2018}
Agaltsov A, Hohage T and Novikov R 2018 {\em \SIAMJAM\/} {\bf
  \href{https://arxiv.org/abs/1804.03375}{78}}
  \href{https://arxiv.org/abs/1804.03375}{2865--2890}

\bibitem{Agaltsov_2020}
Agaltsov A~D, Hohage T and Novikov R~G 2020 {\em \IP\/} {\bf
  \href{https://doi.org/10.1088/1361-6420/ab77d9}{36}}
  \href{https://doi.org/10.1088/1361--6420/ab77d9}{055004, 21}

\bibitem{devaney:79}
Devaney A 1979 {\em {\it Journal of Mathematical Physics}\/} {\bf
  \href{https://doi.org/10.1063/1.524277}{20}}
  \href{https://doi.org/10.1063/1.524277}{1687--1691}

\bibitem{Hohage2020}
Hohage T, Raumer H and Spehr C 2020 {\em \IP\/} {\bf
  \href{https://dx.doi.org/10.1088/1361-6420/ab8484}{ 36}}
  \href{https://dx.doi.org/10.1088/1361--6420/ab8484}{ 7}

\bibitem{Li2021}
{Li} J, {Liu} H and {Ma} S 2021 {\em {\it Communications in Mathematical
  Physics}\/} {\bf \href{https://doi.org/10.1007/s00220-020-03889-9}{ 381}}
  \href{https://doi.org/10.1007/s00220--020--03889--9}{ 527--556}

\bibitem{Gizon2005}
{Gizon} L and {Birch} A~C 2005 {\em \LRSOP\/} {\bf
  \href{https://doi.org/10.12942/lrsp-2005-6}{2}}
  \href{https://doi.org/10.12942/lrsp-2005-6}{6}

\bibitem{Liang2013}
{Liang} Z~C, {Gizon} L, {Schunker} H and {Philippe} T 2013 {\em \ASAS\/} {\bf
  \href{https://doi.org/10.1051/0004-6361/201321483}{558}}
  \href{https://doi.org/10.1051/0004-6361/201321483}{A129}

\bibitem{Nagashima2017}
{Nagashima} K, {Fournier} D, {Birch} A~C and {Gizon} L 2017 {\em \ASAS\/} {\bf
  \href{https://doi.org/10.1051/0004-6361/201629846}{599}}
  \href{https://doi.org/10.1051/0004-6361/201629846}{A111}

\bibitem{Pourabdian2018}
Pourabdian M, Fournier D and Gizon L 2018 {\em \SOP\/} {\bf
  \href{https://doi.org/10.1007/s11207-018-1283-8}{293}}
  \href{https://doi.org/10.1007/s11207--018--1283--8}{ 66}

\bibitem{LB97}
{Lindsey} C and {Braun} D~C 1997 {\em \ASPJ\/} {\bf
  \href{https://doi.org/10.1086/304445}{485}}
  \href{https://doi.org/10.1086/304445}{895--903}

\bibitem{LB2000b}
{Lindsey} C and {Braun} D~C 2000 {\em \SOP\/} {\bf
  \href{https://doi.org/10.1023/A:1005227200911}{192}}
  \href{https://doi.org/10.1023/A:1005227200911}{261--284}

\bibitem{Liewer2014}
{Liewer} P~C, {Gonz{\'a}lez Hern{\'a}ndez} I, {Hall} J~R, {Lindsey} C and {Lin}
  X 2014 {\em \SOP\/} {\bf
  \href{https://doi.org/10.1007/s11207-014-0542-6}{289}}
  \href{https://doi.org/10.1007/s11207--014--0542--6}{3617--3640}

\bibitem{Yang2023}
{Yang} D, {Gizon} L and {Barucq} H 2023 {\em \ASAS\/} {\bf
  \href{https://doi.org/10.1051/0004-6361/202244923}{669}}
  \href{https://doi.org/10.1051/0004-6361/202244923}{A89}

\bibitem{Braun2008}
{Braun} D~C and {Birch} A~C 2008 {\em \ApJL\/} {\bf
  \href{https://doi.org/10.1086/595884}{689}}
  \href{https://doi.org/10.1086/595884}{L161}

\bibitem{Birch2009}
{Birch} A~C, {Braun} D~C, {Hanasoge} S~M and {Cameron} R 2009 {\em \SOP\/} {\bf
  \href{https://doi.org/10.1007/s11207-008-9282-9}{254}}
  \href{https://doi.org/10.1007/s11207--008--9282--9}{17--27}

\bibitem{Lindsey2010}
{Lindsey} C, {Cally} P~S and {Rempel} M 2010 {\em \ASPJ\/} {\bf
  \href{https://doi.org/10.1088/0004-637X/719/2/1144}{719}}
  \href{https://doi.org/10.1088/0004--637X/719/2/1144}{1144--1156}

\bibitem{Cally2000}
{Cally} P~S 2000 {\em \SOP\/} {\bf
  \href{https://doi.org/10.1023/A:1005213002513}{192}}
  \href{https://doi.org/10.1023/A:1005213002513}{395--401}

\bibitem{Schunker2007}
{Schunker} H, {Braun} D~C and {Cally} P~S 2007 {\em \ASN\/} {\bf
  \href{https://doi.org/10.1002/asna.200610732}{328}}
  \href{https://doi.org/10.1002/asna.200610732}{292}

\bibitem{Schunker2008}
{Schunker} H, {Braun} D~C, {Lindsey} C and {Cally} P~S 2008 {\em \SOP\/} {\bf
  \href{https://doi.org/10.1007/s11207-008-9142-7}{251}}
  \href{https://doi.org/10.1007/s11207--008--9142--7}{341--359}

\bibitem{Lindsey2013}
{Lindsey} C and {Donea} A~C 2013 {Statistics of Local Seismic Emission from the
  Solar Granulation} {\em {\it Journal of Physics Conference Series}\/} ({\em
  Journal of Physics Conference Series\/} vol
  \href{https://doi.org/10.1088/1742-6596/440/1/012044}{440}) pp
  \href{https://doi.org/10.1088/1742--6596/440/1/012044}{012044}

\bibitem{LB2000}
{Lindsey} C and {Braun} D~C 2000 {\em \science\/} {\bf
  \href{https://doi.org/10.1126/science.287.5459.1799}{287}}
  \href{https://doi.org/10.1126/science.287.5459.1799}{1799--1801}

\bibitem{Porter1982}
{Porter} R~P and {Devaney} A~J 1982 {\em {\it Journal of the Optical Society of
  America (1917-1983)}\/} {\bf
  \href{https://opg.optica.org/abstract.cfm?URI=josa-72-3-327}{72}} 327

\bibitem{Gizon2018}
{Gizon} L, {Fournier} D, {Yang} D, {Birch} A~C and {Barucq} H 2018 {\em
  \ASAS\/} {\bf \href{https://doi.org/10.1051/0004-6361/201833825}{620}}
  \href{https://doi.org/10.1051/0004-6361/201833825}{A136}

\bibitem{Garnier2016}
Garnier J and Papanicolaou G 2016 {\em Passive Imaging with Ambient Noise\/}
  (Cambridge: Cambridge University Press)

\bibitem{Colton2013}
Colton D and Kress R 2013 {\em Inverse Acoustic and Electromagnetic Scattering
  Theory\/} vol 3rd edition (New York: Springer)

\bibitem{Ihlenburg1998}
Ihlenburg F 1998 {\em Finite Element Analysis of Acoustic Scattering, Appl.
  Math. Sci. 132\/} (New York: Springer)

\bibitem{McLean2000}
McLean W 2000 {\em Strongly ellitpic systems and boundary integral equations\/}
  (Cambridge: Cambridge University Press)

\bibitem{RL:12}
Le~Rousseau J and Lebeau G 2012 {\em {\it ESAIM. Control, Optimisation and
  Calculus of Variations}\/} {\bf
  \href{https://doi.org/10.1051/cocv/2011168}{18}}
  \href{https://doi.org/10.1051/cocv/2011168}{712--747}

\bibitem{Costabel1988}
Costabel M 1988 {\em {\it SIAM Journal on Mathematical Analysis}\/} {\bf \href{
  https://doi.org/10.1137/0519043}{19}} \href{
  https://doi.org/10.1137/0519043}{513--526}

\bibitem{brislawn:88}
Brislawn C 1988 {\em {\it Proceedings of the American Mathematical Society}\/}
  {\bf \href{https://doi.org/10.2307/2047610}{104}}
  \href{https://doi.org/10.2307/2047610}{1181--1190}

\bibitem{Gramsch1968}
Gramsch B 1968 {\em {\it Mathematische Zeitschrift}\/} {\bf
  \href{https://doi.org/10.1007/BF01110715}{106}}
  \href{https://doi.org/10.1007/BF01110715} {81--87}

\bibitem{Meise1992}
Meise R and Vogt D 1992 {\em Einführung in die Funktionalanalysis\/}
  (Wiesbaden: Vieweg+Teubner Verlag)

\bibitem{Fack1986}
Fack T and Kosaki H 1986 {\em {\it Pacific Journal of Mathematics}\/} {\bf
  \href{https://projecteuclid.org/journals/pacific-journal-of-mathematics/volume-123/issue-2/Generalized-s-numbers-of-tau-measurable-operators/pjm/1102701004.full}{
  123 (2) }}
  \href{https://projecteuclid.org/journals/pacific--journal--of--mathematics/volume--123/issue--2/Generalized--s--numbers--of--tau--measurable--operators/pjm/1102701004.full}{269
  -- 300}

\bibitem{Pettis1939}
Pettis B~J 1939 {\em {\it Duke Mathematical Journal}\/} {\bf
  \href{https://doi.org/10.1215/S0012-7094-39-00522-3}{5}}
  \href{https://doi.org/10.1215/S0012--7094--39--00522--3}{249 -- 253}

\bibitem{triebel:78}
Triebel H 1978 {\em Interpolation theory, function spaces, differential
  operators\/} ({\em North-Holland Mathematical Library\/} vol~18)
  (North-Holland Publishing Co., Amsterdam-New York)

\bibitem{Pourabdian2020}
{Pourabdian} M 2020 {\em {Inverse problems in local helioseismology}\/} Ph.D.
  thesis University of G{\"o}ttingen, Germany

\bibitem{Isserlis1918}
Isserlis L 1918 {\em Biometrika\/} {\bf \href{https://doi.org/10.2307/2331932}{
  12 }} \href{https://doi.org/10.2307/2331932}{ 134--139}

\bibitem{Gizon2004}
{Gizon} L and {Birch} A~C 2004 {\em \ASPJ\/} {\bf
  \href{https://doi.org/10.1086/423367}{614}}
  \href{https://doi.org/10.1086/423367}{472--489}

\bibitem{Fournier2014}
{Fournier} D, {Gizon} L, {Hohage} T and {Birch} A~C 2014 {\em \ASAS\/} {\bf
  \href{https://doi.org/10.1051/0004-6361/201423580}{567}}
  \href{https://doi.org/10.1051/0004-6361/201423580}{A137}

\bibitem{CD2003}
Christensen-Dalsgaard J 2003 Lecture notes on stellar oscillations

\bibitem{Schou1994b}
{Schou} J and {Brown} T~M 1994 {\em \ASAS\/} {\bf 107} 541--550

\bibitem{Hill1998}
{Hill} F and {Howe} R 1998 {Calculating the GONG Leakage Matrix} {\em Structure
  and Dynamics of the Interior of the Sun and Sun-like Stars\/} ({\em ESA
  Special Publication\/} vol 418) ed {Korzennik} S p 225

\bibitem{Gizon2017}
{Gizon} L, {Barucq} H, {Durufl{\'e}} M, {Hanson} C~S, {Legu{\`e}be} M, {Birch}
  A~C, {Chabassier} J, {Fournier} D, {Hohage} T and {Papini} E 2017 {\em
  \ASAS\/} {\bf \href{https://doi.org/10.1051/0004-6361/201629470}{600}}
  \href{https://doi.org/10.1051/0004-6361/201629470}{A35}

\bibitem{Lamb1908}
Lamb H 1908 {\em \it {Proceedings of the Royal Society of London. Series A,
  Containing Papers of a Mathematical and Physical Character}\/} {\bf
  \href{http://www.jstor.org/stable/92794}{80}}
  \href{http://www.jstor.org/stable/92794}{168--177}

\bibitem{Barucq2018}
Barucq H, Chabassier J, Duruflé M, Gizon L and Leguèbe M 2018 {\em \ESAIM\/}
  {\bf \href{https://doi.org/10.1051/m2an/2017059 }{52}}
  \href{https://doi.org/10.1051/m2an/2017059 }{945--964}

\bibitem{Fournier2017}
{Fournier} D, {Legu{\`e}be} M, {Hanson} C~S, {Gizon} L, {Barucq} H,
  {Chabassier} J and {Durufl{\'e}} M 2017 {\em \ASAS\/} {\bf
  \href{https://doi.org/10.1051/0004-6361/201731283}{608}}
  \href{https://doi.org/10.1051/0004-6361/201731283}{A109}

\bibitem{Preuss2020}
Preuss J, Hohage T and Lehrenfeld C 2020 {\em {\it SN Partial Differential
  Equations and Applications}\/} {\bf
  \href{https://doi.org/10.1007/s42985-020-00019-x}{1}}(\href{https://doi.org/10.1007/s42985-020-00019-x}{4})

\bibitem{Dalsgaard1996}
{Christensen-Dalsgaard} J, {Dappen} W, {Ajukov} S~V, {Anderson} E~R, {Antia}
  H~M, {Basu} S, {Baturin} V~A, {Berthomieu} G, {Chaboyer} B, {Chitre} S~M,
  {Cox} A~N, {Demarque} P, {Donatowicz} J, {Dziembowski} W~A, {Gabriel} M,
  {Gough} D~O, {Guenther} D~B, {Guzik} J~A, {Harvey} J~W, {Hill} F, {Houdek} G,
  {Iglesias} C~A, {Kosovichev} A~G, {Leibacher} J~W, {Morel} P, {Proffitt} C~R,
  {Provost} J, {Reiter} J, {Rhodes} E~J J, {Rogers} F~J, {Roxburgh} I~W,
  {Thompson} M~J and {Ulrich} R~K 1996 {\em \science\/} {\bf
  \href{https://doi.org/10.1126/science.272.5266.1286}{272}}
  \href{https://doi.org/10.1126/science.272.5266.1286}{1286--1292}

\bibitem{Snieder2007}
{Snieder} R 2007 {\em Acoustical Society of America Journal\/} {\bf
  \href{https://doi.org/10.1121/1.2713673}{121}}
  \href{https://doi.org/10.1121/1.2713673}{2637}

\bibitem{LB90}
{Lindsey} C and {Braun} D~C 1990 {\em \SOP\/} {\bf
  \href{https://doi.org/10.1007/BF00158301}{126}}
  \href{https://doi.org/10.1007/BF00158301}{101--115}

\bibitem{Hagedoorn1954}
Hagedoorn J 1954 {\em \GEPR\/} {\bf \href{
  https://doi.org/10.1111/j.1365-2478.1954.tb01281.x}{2}} \href{
  https://doi.org/10.1111/j.1365--2478.1954.tb01281.x}{85--127}

\bibitem{Claerbout1985}
Claerbout J~F 1985 {\em Imaging the Earth’s Interior\/} (Oxford, U.K.;
  Boston, U.S.A: Blackwell)

\bibitem{Yang2018b}
Yang D 2018 {\em Modeling experiments in helioseismic holography\/} Ph.D.
  thesis Georg-August-University of Göttingen

\bibitem{Skartlien2001}
{Skartlien} R 2001 {\em \ASPJ\/} {\bf
  \href{https://doi.org/10.1086/321360}{554}}
  \href{https://doi.org/10.1086/321360}{488--495}

\bibitem{Skartlien2002}
{Skartlien} R 2002 {\em \ASPJ\/} {\bf
  \href{https://doi.org/10.1086/324687}{565}}
  \href{https://doi.org/10.1086/324687}{1348--1365}

\bibitem{Liang2018}
{Liang} Z~C, {Gizon} L, {Birch} A~C, {Duvall} T~L and {Rajaguru} S~P 2018 {\em
  \ASAS\/} {\bf \href{https://doi.org/10.1051/0004-6361/201833673}{619}}
  \href{https://doi.org/10.1051/0004-6361/201833673}{A99}

\bibitem{Korzennik2013}
{Korzennik} S~G, {Rabello-Soares} M~C, {Schou} J and {Larson} T~P 2013 {\em
  \ASPJ\/} {\bf \href{https://doi.org/10.1088/0004-637X/772/2/87}{772}}
  \href{https://doi.org/10.1088/0004-637X/772/2/87}{87}

\bibitem{Larson2015}
{Larson} T~P and {Schou} J 2015 {\em \SOP\/} {\bf
  \href{https://doi.org/10.1007/s11207-015-0792-y}{290}}
  \href{https://doi.org/10.1007/s11207--015--0792--y}{3221--3256}

\bibitem{Schoeberl1997}
Schoeberl J 1997 {\em {\it Computing and Visualization in Science}\/} {\bf
  \href{https://doi.org/10.1007/s007910050004}{1}}
  \href{https://doi.org/10.1007/s007910050004}{41--52}

\bibitem{Schoeberl2014}
Schoeberl J 2014 C++11 implementation of finite elements in ngsolve Tech. rep.
  Institute for Analysis and Scientific Computing; Karlsplatz 13, 1040 Vienna

\bibitem{Barucq2020}
{Barucq} H, {Faucher} F, {Fournier} D, {Gizon} L and {Pham} H 2020 {\em
  \SIAMJAM\/} {\bf \href{https://doi.org/10.1137/20m1336709}{80}}
  \href{https://doi.org/10.1137/20m1336709}{2657}

\bibitem{Fournier2018}
{Fournier} D, {Hanson} C~S, {Gizon} L and {Barucq} H 2018 {\em \ASAS\/} {\bf
  \href{https://doi.org/10.1051/0004-6361/201833206}{616}}
  \href{https://doi.org/10.1051/0004-6361/201833206}{A156}

\end{thebibliography}

\appendix
\section{Reference Green's function for the Sun}
\label{sec: Green}
 The Green's function is usually computed in the frequency domain with background parameters specified by the spherically symmetric standard Model~S \cite{Dalsgaard1996}. Additionally, we have to fix the wave attenuation. We choose a frequency-dependent wave attenuation model, motivated by the Full Width at Half Maximum (FWHM) of wave modes (\cite{Korzennik2013}, \cite{Larson2015}, \cite{Gizon2017}):
\begin{equation*}
    \gamma(\bi{r}, \omega) = \left\{
    \begin{array}{ll}
    \gamma_0 \left| {\omega}/{\omega_0} \right|^{5.77}  &  \textrm{for }\omega \leq 5.3\,\textrm{mHz} 
    \\
     2\pi \times 125~\mu \textrm{Hz} & \textrm{for } \omega \geq 5.3~\textrm{mHz}
     \end{array}
     \right. ,
\end{equation*}
where ${\gamma_0}/{2 \pi}=4.29~\mu$Hz and ${\omega_0}/{2 \pi}=3$~mHz. We extend the computational boundary by 500~km above the solar surface (compare with the density scale height of $H=105$~km) and apply the radiation boundary condition "Atmo Non Local" (see \cite{Fournier2017}), assuming an exponential decay of density and constant sound speed in the solar atmosphere.\\
In a spherical symmetric background, we can decompose the Green's function into spherical harmonics:
\begin{eqnarray}
    G(\bi{r}_1, \bi{r}_2)=\sum_{l=0}^{\infty} \sum_{m=-l}^l G_l(r_1, r_2) Y_{lm}(\hat{\bi{r}}_1) Y_{lm}^*(\hat{\bi{r}}_2),
\end{eqnarray}
where the $Y_{lm}$ are spherical harmonics. The functions $G_l(r_1, r_2)$ satisfy a one-dimensional differential equation and are computed with \textit{NGsolve} \cite{Schoeberl1997}, \cite{Schoeberl2014}. The computation of the Green's function is usually expensive, as the stiffness matrix has to be inverted. The two-step algorithm of \cite{Barucq2020} allows us to obtain the full modal Green's function from two computations only. Furthermore, this expansion allows us to use a low-rank approximation for the Green's function.

\section{Green's function in uniform medium}
\label{sec: Uniform Green}
We perform numerical toy examples in uniform flow-free two-dimensional and three-dimensional background mediums and consider a Sommerfeld boundary condition. The differential equation~\eref{eqn:Helmholtz} reduces to the Helmholtz equation
\begin{eqnarray}
    -(\Delta+k^2) \psi=s,
\end{eqnarray}
where $k$ is constant. In this setting, the Green's function is well known (e.g. \cite{Colton2013}):
\begin{eqnarray}
    &G(\bi{x}, \bi{y}, k)=\frac{i}{4} H^1_0(k \vert \bi{x}-\bi{y} \vert), \quad d=2\\
    &G(\bi{x}, \bi{y}, k)=\frac{\exp(ik \vert \bi{x}-\bi{y} \vert)}{4 \pi \vert \bi{x}-\bi{y} \vert}, \quad d=3,
\end{eqnarray}
where $H^1_0$ is the Hankel function of first kind. 

The Green's functions are weakly singular at $\bi{x}=\bi{y}$. We will approximate the Green's functions around the singularity using asymptotics:
\begin{eqnarray}
\fl G(\bi{x},\bi{y})=\frac{1}{2 \pi} \ln(\frac{1}{\vert \bi{x}-\bi{y} \vert}) + \frac{i}{4}-\frac{1}{2 \pi} \ln(\frac{k}{2})-\frac{C}{2 \pi}+O(\vert \bi{x}-\bi{y} \vert^2 \ln(1/\vert \bi{x}-\bi{y} \vert)), \,\,\,\,d=2\\
\fl G(\bi{x}, \bi{y}) = \frac{1}{4 \pi \vert \bi{x}-\bi{y} \vert}+\frac{ik}{4 \pi}+O(\vert \bi{x}-\bi{y} \vert),\,\,\,\,d=3,
\end{eqnarray}
where the constant $C$ denotes the Euler-Mascheroni constant.
In inversions of extended properties like large-scale flows, it is more feasible to work in an angular basis (spherical harmonics in three dimensions and trigonometric functions in two dimensions). The Green's functions for the uniform medium can be described by
\begin{eqnarray}
&G(\bi{x},\bi{y})=H_0^1(k \vert \bi{x} \vert) J_0(k \vert \bi{y} \vert)+2 \sum_{n=1}^{\infty} H_n^1(k \vert \bi{x} \vert) J_n(k \vert \bi{y} \vert)\cos(n \theta_{x,y}),\,\,\,\,d=2\\
&G(\bi{x}, \bi{y})=ik \sum_{n=0}^{\infty} \sum_{m=-n}^{m=n} h_n^1 (k \vert \bi{x} \vert) Y_{nm}(\hat{\bi{x}}) j_n(k \vert \bi{y} \vert ) Y_{nm}^{*} (\hat{\bi{y}}),\,\,\,\,d=3,
\end{eqnarray}
for $\vert \bi{x} \vert \geq \vert \bi{y} \vert$.
Here, $J_n, h_n^1, j_n$ denote the Bessel function, spherical Hankel function, and spherical Bessel function. Moreover, $\theta_{x,y}$ denotes the angular distance between $\bi{x}$ and $\bi{y}$. Furthermore, this basis transformation allows a natural implementation of the singularity. We use this expansion in order to use low-rank approximations for the Green's function.

\section{Conservation of mass}
\label{sec: conservation of mass}

For flow inversions, considerable improvements are achieved by incorporating mass conservation in the inversion process (\cite{Fournier2018}). 

An equality constraint $R \delta \bi{u}_k=0$, where $R: \mathbb{X} \to \mathbb{Z}$ is a bounded linear operator, can be incorporated by employing the method of Lagrange multiplier. For the iterative Gauss-Newton method, we solve the normal equation:
\begin{eqnarray}
    \fl \delta \bi{u}_k= \underset{\operatorname{div}(\rho \delta \bi{u})=0}{\operatorname{argmin}}  \Vert (\Gamma_n^{1/2} \times \Gamma_n^{1/2}) \left[\mathcal{C}^{\prime}[\bi{u}_k](\delta \bi{u})- (\Corr-\mathcal{C}[\bi{u}_k]) \right] \Vert_{\mathbb{Y}}+\alpha_k \Vert \delta \bi{u} \Vert_{\mathbb{X}},
\end{eqnarray}
with Hilbert spaces $\mathbb{X}$, $\mathbb{Y}$, noise covariance operator $\Gamma$ defined in \eqref{eq:defi_Gamma_n}. The Lagrange function takes the form $\mathcal{L} (\delta \bi{u}_k, \mu):=\Vert (\Gamma_n^{1/2} \times \Gamma_n^{1/2}) \left[\mathcal{C}^{\prime}[\bi{u}_k](\delta \bi{u}_k)- (\Corr-\mathcal{C}[\bi{u}_k]) \right] \Vert_{\mathbb{Y}}+\alpha_k \Vert \delta \bi{u}_k \Vert_{\mathbb{X}}+\langle \mu, \alpha R \delta \bi{u}_k \rangle_{\mathbb{Z}}$ with the Lagrange multiplier $\mu \in \mathbb{Z}$. The saddle point can be found by
\begin{eqnarray}
  \fl  \begin{pmatrix}
\mathcal{C}^{\prime}[\bi{u}_k]^* (\Gamma_n \times \Gamma_n) \mathcal{C}^{\prime}[\bi{u}_k]+\alpha \Id_{\mathbb{X}} & \alpha R^* \\
\alpha R & 0 
\end{pmatrix}
\begin{pmatrix}
\delta \bi{u}_k\\
\mu 
\end{pmatrix}
=\begin{pmatrix}
\mathcal{C}^{\prime}[\bi{u}_k]^* (\Gamma_n \times \Gamma_n) (\Corr-\mathcal{C}[\bi{u}_k])\\
0
\end{pmatrix}.
\end{eqnarray}


\end{document}